\begin{document}
\newcommand\jdoms\blacktriangleright
\makeatletter
\newcommand*{\jdom}{%
\mathrel{%
\mathpalette\@blackgeq{}%
}%
}
\newcommand*{\@blackgeq}[1]{%
\vcentre{%
\m@th
\setbox0=\hbox{$#1\mkern3mu$}%
\setbox2=\hbox{$#1\vcentre{}$}%
\setbox4=\hbox{\raisebox{1.1pt}[.2pt][.2pt]{$#1\geqslant$}}%
\hbox{$#1\blacktriangleright$}%
\nointerlineskip
\kern\wd0 %
\copy4 %
}6%
}
\makeatother

\newcommand\ee{\mathrm e}
\newcommand\luar[3]{\draw[->,#1](#2)to[out=165,in=15](#3);}
\newcommand\ruar[3]{\draw[->,#1](#2)to[out=15,in=165](#3);}
\newcommand\ldar[3]{\draw[->,#1](#2)to[out=195,in=345](#3);}
\newcommand\rdar[3]{\draw[->,#1](#2)to[out=345,in=195](#3);}
\newcommand\Ga\Gamma
\newcommand\reg{\operatorname{reg}}
\newcommand\grib{maximal \conf}
\newcommand\gribs{\grib{}s\xspace}
\newcommand\blu{\Yfillcolour{blue!25!white}}
\newcommand\wht{\Yfillcolour{white}}
\newcommand\Hom{\operatorname{Hom}}
\renewcommand\rt[1]{\rotatebox{90}{$#1$}}
\newcommand\zpz{\bbz/p\bbz}
\newcommand\zez{\bbz/e\bbz}
\newcommand\ebeb{e_B\,e_{B'}}
\newcommand\spe[1]{\operatorname{S}^{#1}}
\newcommand\smp[1]{\operatorname{D}^{#1}}
\newcommand\dspe[1]{\operatorname{S}'(#1)}
\newcommand\dsmp[1]{\operatorname{D}'(#1)}
\newcommand\dn[2]{[\spe{#1}:\smp{#2}]}
\newcommand\ol[1]{\widebar{#1}}
\newcommand\FS[1]{\bbfS_{#1}}
\newcommand\kS[1]{kS_{#1}}
\newcommand\daha{\calh_n^k}
\newcommand\cenalg[3]{\calc_{#1,#2}^{#3}}
\newcommand\cenalgA{\cenalg lmA}
\newcommand\cenalgR{\cenalg lmR}
\newcommand\cenalgF{\cenalg lm{\bbf}}
\newcommand\cenalgk{\cenalg lmk}
\newcommand\skewp{skew-partition\xspace}
\newcommand\skewps{skew-partitions\xspace}
\newcommand\skewd{skew-diagram\xspace}
\newcommand\skewds{skew-diagrams\xspace}
\newcommand\sk[2]{#1\setminus#2}
\newcommand\abac{abacus configuration\xspace}
\newcommand\abacs{\abac{}s\xspace}
\newcommand\Ab[1]{\operatorname{Ab}(#1)}
\newcommand\conf{$p$-shape\xspace}
\newcommand\confs{$p$-shapes\xspace}
\newcommand\X{\mu_X\setminus\la_X}
\newcommand\Y{\mu_Y\setminus\la_Y}
\newcommand\Xmod{\mu_Z\setminus\la_X}
\newcommand\Ymod{\mu_Y\setminus\la_W}
\newcommand\conjbl{combinatorial block\xspace}
\newcommand\conjbls{combinatorial blocks\xspace}
\newcommand\grb{generalised ribbon block\xspace}
\newcommand\Grb{Generalised ribbon block\xspace}
\newcommand\grbs{\grb{}s\xspace}
\newcommand\Grbs{\Grb{}s\xspace}
\newcommand\ribbl{ribbon block\xspace}
\newcommand\ribbls{ribbon blocks\xspace}
\newcommand\belbl{belt block\xspace}
\newcommand\belbls{belt blocks\xspace}
\newcommand\rib{ribbon\xspace}
\newcommand\ribs{ribbons\xspace}
\newcommand\Ribs{Ribbons\xspace}
\newcommand\belt{belt\xspace}
\newcommand\belts{belts\xspace}
\newcommand\reft{refinement\xspace}
\newcommand\refts{refinements\xspace}
\newcommand\Dist[2]{\operatorname{D}(#1,#2)}
\newcommand\dist[2]{\operatorname{d}(#1,#2)}
\newcommand\arrg{arrow graph\xspace}
\newcommand\arrgs{arrow graphs\xspace}
\newcommand\nodres{node of residue\xspace}
\newcommand\forch{formal character\xspace}
\newcommand\forchs{formal characters\xspace}
\newcommand\cench{central character\xspace}
\newcommand\ch[1]{\operatorname{\mathrm{ch}}(#1)}
\newcommand\cch[1]{\operatorname{\calc}(#1)}
\newcommand\arr[1]{\Ga_{#1}}
\newcommand\arrX{\arr X}
\newcommand\arrY{\arr Y}
\newcommand\conncomp{connected component\xspace}
\newcommand\conncomps{\conncomp{}s\xspace}
\newcommand\resp[1]{\operatorname{cont}_p(#1)}
\newcommand\mulam{\mu\setminus\la}
\newcommand\mulamone{\mu_1\setminus\la_1}
\newcommand\mulamtwo{\mu_2\setminus\la_2}
\newcommand\core[1]{\operatorname{core}_p(#1)}
\newcommand\corep[2]{\operatorname{core}_{#1}(#2)}

\newcommand\wb{1.35}

\newcommand\ipo{\text{\footnotesize$i{+}1$}}
\newcommand\ipj{\text{\footnotesize$i{+}j$}}
\newcommand\imo{\text{\footnotesize$i{-}1$}}
\newcommand\kpo{\text{\footnotesize$k{+}1$}}
\newcommand\jmo{\text{\footnotesize$j{-}1$}}
\newcommand\jo{j_1}
\newcommand\jt{j_t}
\newcommand\bxi{\young(i)}
\newcommand\bxj{\young(j)}
\newcommand\bxk{\young(k)}
\newcommand\bxipo{\gyoung(_\wb\ipo)}
\newcommand\bxipj{\gyoung(_\wb\ipj)}
\newcommand\bxkpo{\gyoung(_\wb\kpo)}
\newcommand\bxjmo{\gyoung(_\wb\jmo)}

\newcommand\mattnote[1]{\textcolor{blue!60!green}{\textbf{Matt:} #1}}
\newcommand\putinote[1]{\textcolor{red}{\textbf{Puti:} #1}}

\title{Ribbon blocks for centraliser algebras\\of symmetric groups}

\msc{20C08, 20C30, 05A17, 05E10}

\author{\begin{tabular}{cc}{\Large Matthew Fayers}&{\Large Lorenzo Putignano}\\[3pt]
{\normalsize Queen Mary University of London}&{\normalsize Universit\`a degli Studi di Firenze}\\
{\normalsize\texttt{\normalsize m.fayers@qmul.ac.uk}}&{\normalsize\texttt{\normalsize lorenzo.putignano@unifi.it}}
\end{tabular}}


\renewcommand\auth{Matthew Fayers \& Lorenzo Putignano}
\runninghead{Ribbon blocks for centraliser algebras of symmetric groups}

\toptitle

\begin{abstract}
Suppose $l,m$ are natural numbers with $l\ls m$, and $\bbf$ a field of characteristic $p$, and let $\cenalgF$ denote the centraliser of the group algebra $\bbf S_l$ inside $\bbf S_m$. Ellers and Murray give a conjectured classification of the blocks of $\cenalgF$, in terms of the $p$-blocks of $S_l$ and $S_m$. We prove this conjecture for a family of blocks that we call \emph{ribbon blocks} and \emph{belt blocks}. These are the blocks containing Specht modules labelled by \skewps having no repeated entries in their $p$-content.
\end{abstract}

\tableofcontents

\section{Introduction}

Let $G$ be a finite group, $H$ a subgroup of $G$ and $\bbf$ an algebraically closed field. The \emph{centraliser algebra} $\bbf G^H$ is defined by
\[\bbf G^H=\{a\in\bbf G\ |\ ah=ha\ \forall h\in H\}.\]

This algebra appears in a series of papers by Ellers \cite{el1,el2,el3,el4} mainly motivated by the attempt to extend some \emph{local-global} conjectures and theorems whenever $\bbf$ has positive characteristic. Such statements relate representation-theoretic information on $\bbf G^H$ to local information.  In particular, Ellers' main motivation lies in constructing a theory in which Alperin's Weight Conjecture and Brauer's First Main Thoerem are just special cases of a more general setting.

With this in mind, the centraliser algebra has become an important object of study. If $\bbf$ has characteristic zero, then a more general argument by Curtis and Reiner \cite[Section 11D]{currei} shows that $\bbf G^H$ is a semisimple algebra, with a straightforward construction of its simple modules. So the main focus is on the case where $\bbf$ has positive characteristic.

In this paper we focus on the case where both $G$ and $H$ are symmetric groups. Let $l,m$ be non-negative integers with $l\ls m$. The symmetric group $S_l$ is naturally a subgroup of $S_m$ and hence it makes sense to consider the centraliser algebra $\bbf S_m^{S_l}\vcentcolon=\cenalgF$. The main goal is to find a suitable labelling for the simple $\cenalgF$-modules and a complete description of its blocks, i.e.\ minimal two-sided ideals whose direct sum gives the whole algebra. A powerful result would be to give answers to these issues in terms of the combinatorics of integer partitions, somehow extending the symmetric group case pioneered by James \cite{james} which we recall in \cref{backnot}. Following \cite{klbook}, a complete set of pairwise non-isomorphic simple $\cenalgF$-modules when $\operatorname{char}(\bbf)=0$ is given by modules of the form \[\spe{\mulam}\vcentcolon=\Hom_{\bbf S_l}(\spe{\mu}\downarrow_{S_l},\spe{\la})\] where $\mu$ and $\la$ denote partitions of $m$ and $l$, respectively, and $\mulam$ is a \emph{\skewp} in the sense of \cref{secskew}. In view of the similarity with the symmetric group case, we name these modules \emph{Specht modules}. The Specht modules are also defined in positive characteristic, where they are generally reducible, and (unlike in the symmetric group case) there does not seem to be a straightforward way to obtian the simple modules from the Specht modules. In positive characteristic, a full description of the simple modules of $\cenalgF$ is far out of reach.

The goal of the present paper is to characterise certain blocks of $\cenalgF$. As in the symmetric group case, describing the blocks amounts to finding the appropriate subdivision of the set of Specht modules (or equivalently, of the set of \skewps).  In \cite{elmu}, Ellers and Murray suggested a classification of the blocks in positive characteristic and checked it for $m-l\le3$. Their conjecture (\cref{mainconj}) is inspired by the Nakayama Conjecture (\cref{nakconj}) and hence is in terms of cores of partitions. We define a \emph{\conjbl} to be a set of \skewps which is predicted to comprise a single block by \cref{mainconj}. The work of Ellers and Murray shows that every combinatorial block is a union of blocks, so the remaining task to prove their conjecture is to show that the Specht modules labelled by two \skewps in the same combinatorial block really do lie in the same block.

In this paper we prove Ellers and Murray's conjecture for a class of \conjbls of $\cenalgF$ we call \emph{ribbon blocks} and \emph{belt blocks}. These are precisely the \conjbls containin \skewps having no repetitions in their content (see \cref{secskew}). In \cref{rbsec} we show that (a proper approximation of) the decomposition matrix of a ribbon or belt block is a connected matrix; this implies that \conjbls of this kind are blocks of the centraliser algebra.

\begin{acks}
	The second author is supported by GNSAGA (INdAM) and has been funded by the European Union - Next Generation EU, Missione 4 Componente 1, PRIN 2022-2022PSTWLB - Group Theory and Applications, CUP B53D23009410006.
\end{acks}

\section{Background and notation}\label{backnot}

In this chapter we introduce all the background we need in \cref{rbsec}. We begin with some basic notations. Let $R$ be a commutative ring with field of fractions $\bbf$. Let $\textfrak{p}$ be a maximal ideal of $R$ and denote by $k$ the residue field $R/\textfrak{p}$ of positive characteristic $p$. We suppose that both $\bbf$ and $k$ are algebraically closed.

If $e\gs2$ is an integer, then we write $\widebar a$ to mean $a+e\bbz$, for $a\in\bbz$, so that $\zez=\lset{\widebar a}{a\in\bbz}=\{\widebar0,\widebar1,\dots,\widebar{e-1}\}$.

In this paper, $\mathbb{N}$ denotes the set of positive integers.

\subsection{Partitions and Specht modules}\label{secpart}

In this section we introduce the main combinatorial objects in this paper. A \emph{partition} is a non-decreasing sequence $\nu=(\nu_1,\nu_2,\dots)$ of non-negative integers such that $\nu_N=0$ for $N\gg0$. The integer $\nu_k$ is called the $k$-th \emph{part} of $\nu$ and the number of non-zero parts is referred as the \emph{length} of $\nu$ and denoted by $l(\nu)$. The (finite) sum $\nu_1+\dots+\nu_{l(\nu)}$ is referred as the \emph{size} of $\nu$ and denoted by $|\nu|$. If $n\ge0$ and $\nu$ is a partition of size $n$, we say that \emph{$\nu$ is a partition of $n$} and we write $\nu\vdash n$. We denote by $\vn$ the unique partition of 0. When writing partitions we omit the trailing zeroes and group together equal parts with a superscript, e.g.\ $(3,2^3,1)$ stands for the length $5$ partition $(3,2,2,2,1,0,\dots)\vdash10$. We denote by $\calp$ the set of all partitions and by $\calp_n$ the set of partitions of a fixed non-negative integer~$n$. 

The set of partitions $\calp_n$ plays the role of a powerful labelling set for a relevant class of $RS_{n}$, $\bbf S_n$ and $kS_n$-modules known in the literature as \emph{Specht modules}. The set of Specht modules $\{\spe{\nu}\ |\ \nu\in\calp_n\}$ is a crucial object in the study of the representation theory of the symmetric group $S_n$ because it provides a complete set of pairwise non-isomorphic simple $\bbf S{n}$-modules (in fact, more generally, over any characteristic zero field). If $\bbf$ is replaced by the residue field $k$, an analogous statement holds if $n<p$. When $n\gs p$, a complete family of pairwise non-isomorphic simple $\kS{n}$-modules can be constructed from the Specht modules: if $\mu$ is a \emph{$p$-regular} partition (meaning that it does not have $p$ equal positive parts), then the Specht module $\spe\mu$ over $k$ has a simple head $\smp\mu$, and the simple modules obtained in this way give a complete set of non-isomorphic simple $kS_n$-modules. The main outstanding problem in the modular representation theory of the symmetric group is the determination of the \emph{decomposition numbers} $[\spe\la:\smp\mu]$ for all partitions $\la,\mu$ of $n$ with $\mu$ $p$-regular.

\subsection{Young diagrams and standard tableaux}\label{secyoung}

We now introduce a good method for visualising partitions. The \emph{Young diagram} of a partition $\nu$ is the set 
\[
[\nu]=\{(r,c)\in\mathbb{N}^2\ |\ c\le\nu_r\}.
\]
The elements of $[\nu]$, and more generally of $\mathbb{N}^2$, are called \emph{nodes}. We draw a Young diagram as an array of boxes in the plane such that the horizontal axis is oriented left-to-right and the vertical axis top-to-bottom (commonly referred as the \emph{English} notation). For example, $[(3,2^3,1)]$ appears as follows.
\[
\yng(3,2,2,2,1)
\]
Motivated by the drawing, we say that the node $(r,c)$ is \emph{above} the node $(s,d)$, or that $(s,d)$ is \emph{below} $(r,c)$, if $r<s$. Analogously, we say that $(r,c)$ lies \emph{to the left} of $(s,d)$, or that $(s,d)$ lies \emph{to the right} of $(r,c)$, whenever $c<d$. 

Fix $e$ is a positive integer. The \emph{content} of a node $(r,c)$ is the integer $c-r$, and the \emph{$e$-residue} of $(r,c)$ is $\widebar{c-r}$. Note that nodes lying in the same diagonal of $\mathbb{N}^2$ have the same content and therefore the same $e$-residue for all $e$. If $\nu\in\calp$, we define the \emph{$e$-content} and we write $\mathrm{cont}_e(\nu)$, for the multiset of $e$-residues of the nodes in $[\nu]$. For example,
\[
\mathrm{cont}_3(3,2^3,1)=\{\widebar{-4},\widebar{-3},\widebar{-2},\widebar{-2},\widebar{-1},\widebar{-1},\widebar0,\widebar0,\widebar1,\widebar2\}=\{\widebar0,\widebar0,\widebar0,\widebar1,\widebar1,\widebar1,\widebar2,\widebar2,\widebar2,\widebar2\}.
\]

The \emph{rim} of a partition $\nu$ is the set of nodes $(r,c)\in[\nu]$ such that $(r+1,c+1)\notin[\nu]$. We regard this set as ordered from from SW to NE saying that a node $(r,c)$ comes after $(s,d)$ if $(r,c)$ lies above or to the right of $(s,d)$. 
For $h\ge1$, we define a \emph{removable $h$-hook} of $\nu$ to be a set $H$ of $h$ consecutive nodes $(r_1,c_1),\dots,(r_h,c_h)$ in the rim of $\nu$ such that $(r_1,c_1+1)\notin[\nu]$ and $(r_h+1,c_h)\notin[\nu]$. We call $(r_1,c_1)$ and $(r_h,c_h)$ the \emph{hand node} and the \emph{foot node} of $H$, respectively. By definition the nodes of a removable $h$-hook can be removed from $[\nu]$ to leave the Young diagram of a partition of $|\nu|-h$. In particular, if $h=1$, we call a removable $1$-hook simply a \emph{removable node}. Dually, we define the \emph{neighbours} of $[\nu]$ to be the nodes $(s,d)\notin[\nu]$ such that either $s=1$ or $d=1$, or $(s-1,d-1)\in[\nu]$. The neighbours of $\nu$ are naturally ordered in increasing order of content. For $h\ge1$, a set $H$ of $h$ consecutive neighbours $(s_1,d_1),\dots,(s_h,d_h)$ is called an \emph{addable $h$-hook} if $s_1=1$ or $(s_1-1,d_1)\in[\nu]$, and $s_h=1$ or $(s_h,d_h-1)\in[\nu]$. As in the dual case, we call $(s_1,d_1)$ the \emph{hand node} and $(s_h,d_h)$ the \emph{foot node} of $H$. Again we can see that, pictorially, an addable $h$-hook is a series of nodes which can be added to $[\nu]$ to obtain the Young diagram of a partition of $|\nu|+h$. An addable $1$-hook is called an \emph{addable node}.

Fix $n\ge0$ and $\nu\in\calp_n$. We define a \emph{$\nu$-tableau} as a bijection $T:[\nu]\rightarrow\{1,\dots,n\}$ represented by filling the nodes of the Young diagram $[\nu]$ with their images under $T$. A $\nu$-tableau is called \emph{standard} if its entries increase along rows and down columns.
\begin{eg}
Let $\nu=(5,3,2,1)\vdash11$. The following are two $\nu$-tableaux, the one on the left being standard.
\[
\young(1378<10>,259,46,<11>)\ \ \ \ \ \ \ \ \ \ \ \ \young(4928<10>,<11>73,16,5)
\] 
\end{eg}

\subsection{Skew-partitions and skew Specht modules}\label{secskew}

This section gets the reader in touch with the combinatorial protagonist of the paper. Since the algebra of interest -- as we shall see in \cref{seccentalg} -- relies on two natural numbers, it seems natural to consider a relation of \emph{containment} between pairs of partitions.

Let $\la,\mu\in\calp$. We say that $\la$ \emph{lies inside} $\mu$ if $[\la]\subseteq[\mu]$. In this case we say that $\mulam$ is a \emph{\skewp}. We immediately see that, if $\mulam$ is a \skewp, then $|\mu|\ge|\la|$ with equality holding if and only if $\mu=\la$. We call the non-negative integer $|\mu|-|\la|$ the \emph{size} of $\mulam$. Given two non-negative integers $l\le m$, we set $\calp_{l,m}\vcentcolon=\{\mulam\ |\ \la\in\calp_l,\ \mu\in\calp_m\}$. 

It is useful to visualise \skewps as Young diagrams. For a \skewp $\mulam$, we define the \emph{\skewd} $[\mulam]$ as the set subset of $\mathbb{N}^2$ comprising nodes $[\mu]\setminus[\la]$. For example, if $\mu=(6,4^2,3,2)\vdash19$ and $\la=(3,2^3,1)\vdash10$, the \skewd $[\mulam]$ comprises the coloured nodes in the picture below.

\[
\begin{tikzpicture}[scale=1]
\tyng(0cm,0cm,6,4,4,3,2)
\Ylinethick{1.5pt}
\Yfillcolour{green}
\tgyoung(0cm,0cm,:::;;;,::;;,::;;,::;,:;)
\end{tikzpicture}
\]
Note that two different \skewps can have the same Young diagram; for example, $[(3,2)\sm(3,1)]=[(2^2,1)\sm(2,1^2)]$.

We remark that if $\la$ lies inside $\mu$, the Young diagram $[\la]$ can be recovered by repeatedly deleting removable hooks from $[\mu]$. The union of these hooks comprises the \skewd $[\mulam]$. Note that this series of removals is not unique.

Similarly to the case of partitions, for $e$ a positive integer, we define the \emph{$e$-content} of a \skewp $\mulam$, and we write $\mathrm{cont}_e(\mulam)$, as the multiset of $e$-residues of the nodes in $[\mulam]$, e.g.
\[
\mathrm{cont}_4((6,4^2,3,2)\setminus(3,2^3,1))=\{\widebar{-3},\widebar{-1},\widebar0,\widebar1,\widebar1,\widebar2,\widebar3,\widebar4,\widebar5\}=\{\widebar0,\widebar0,\widebar1,\widebar1,\widebar1,\widebar1,\widebar2,\widebar3,\widebar3\}.
\]

In conclusion to this section, we generalise the ideas of tableau and standard tableau to the case of \skewps. If $\mulam$ is a \skewp of size $n$, a \emph{$\mulam$-tableau} is a bijection $T:[\mulam]\rightarrow\{1,\dots,n\}$ represented by filling the nodes of $[\mulam]$ with their images under $T$. A tableau is \emph{standard} if its entries increase along rows and down columns. Standard \emph{skew-tableaux} play a role in the representation of symmetric groups: if $\mulam\in\calp_{l,m}$, the set of standard $\mulam$-tableaux is in bijection with the paths between the Specht modules $\spe{\la}$ and $\spe{\mu}$ in the branching graph of symmetric groups (see for example \cite[Theorem 2.8.3]{sagan}). The branching rule itself also ensures that if $A$ is a commutative ring $A$, then the $A$-module
\[
\spe{\mulam}\vcentcolon=\Hom_{AS_l}(\spe{\la},\spe{\mu}\downarrow_{S_l})
\]
is non-trivial \iff $\mulam$ is a \skewp, in which the dimension of $\spe{\mulam}$ equals the number of standard $\mulam$-tableaux. Unlike in the case of symmetric groups, no closed formula for this number is known. Modules of this kind will come back in \cref{seccentalg} and will be the heart of our treatment.

From now on throughout this paper we will abuse notation by omitting square brackets and not distinguishing between partitions and Young diagrams and between \skewps and \skewds.

\subsection{The abacus}\label{secabac}
In this section we introduce an idea of James and Kerber \cite{jameskerber} that gives another combinatorial perspective to the objects mentioned in \cref{secpart,secskew}. Fix $e$ a positive integer. We consider an abacus display with $e$ vertical runners labelled from left to right by the symbols $0,\dots,e-1$. We index the \emph{positions} in the $i$-th runner by the integers $i,i+e,i+2e,\dots$ from the top down. Then we align runners so that position $x$ is immediately to the right of position $x-1$ whenever $e\nmid x$. Each position in the abacus is either a \emph{bead} $\abacus(b)$ or a \emph{space} $\abacus(n)$. 

Now consider a partition $\nu$ and an integer $C\ge l(\nu)$. We put a bead in the abacus display just constructed at position $\nu_j-j+C$ for $1\le j\le C$. We refer to this drawing as the \emph{\abac} of $\nu$ (of charge $C$) and we denote it by $\mathrm{Ab}_e(\nu)$, or simply $\Ab{\nu}$ if $e$ is clear from the context. 
For convenience we always choose $C\equiv 0\ppmod e$ (this means that the total number of beads in an abacus display is a multiple of $p$). Observe that increasing $C$ by $e$ entails adding a row of beads at the top of the \abac. 

Note that, by construction, every runner of $\Ab{\nu}$ has infinitely many consecutive spaces downwards. When we draw an \abac, we will use the convention that all positions below those shown are spaces. The picture below exhibits the \abac of $(7,3,2^2,1^2)\vdash16$ in an abacus display with 4 runners and 12 beads.
\[
\abacus(e0e1e2e3,lmmr,bbbb,bbnb,bnbb,nbnn,nnbn,nnnn)
\]
We remark that given the \abac $\Ab{\nu}$, we can read off the $j$-th part of $\nu$ counting the number of spaces which precede the $j$-th largest bead position in $\Ab{\nu}$. Note that, by definition, there is a correspondence between the rim of $\nu$ and the set of positions $-l(\nu)+1+K\le x\le \nu_1-1+K$ in $\Ab{\nu}$, where a node $(r,c)$ corresponds to the position marked by $c-r+K$; this position is a bead if $(r,c)$ lies below the next node in the rim of $\nu$, or $(r,c)$ is the last node of the rim, i.e. $r=1$ and $c=\lambda_1$, and otherwise this position is a space.

This correspondence gives another way to recover the shape of $\nu$ -- and especially of its rim -- from the \abac $\Ab{\nu}$, and will be useful in the next section. We remark that for $h\ge1$, a removable $h$-hook arises from a position $x$ such that $x$ is a bead and $x-h$ is a space in $\Ab\nu$. The removal of this $h$-hook from $\nu$ corresponds to moving the bead at $x$ into the space at $x-h$. Similarly, adding an addable $h$-hook corresponds to moving a bead at position $x$ to position $x+h$, for some $x$. We will say that a removable or addable hook is \emph{at runner $i$} if it corresponds to a position $x\equiv i\ppmod e$. In the \abac above, for example, $(7,3,2^2,1^2)$ has a removable $3$-hook at runner $2$ since position $6$ is a bead and position $3$ is a space. Switching these two positions gives the \abac of the partition $(4,3,2^2,1^2)\vdash13$ obtained by removing nodes $(1,5)$,$(1,6)$ and $(1,7)$ from $(7,3,2^2,1^2)$.


\subsection{$p$-cores and $p$-blocks}\label{seccore}

Our main interest in this paper is understanding blocks, and to prepare the ground we recall the combinatorics underlying the $p$-block theory of the symmetric groups.

Fix an integer $e\ge2$, we say that a partition is an \emph{$e$-core} if it has no removable $e$-hooks. Given $\nu\in\calp$, we define the \emph{$e$-core of $\nu$} as the partition obtained by successively deleting removable $e$-hooks from $\nu$ until an $e$-core is reached. We denote this partition by $\mathrm{core}_e(\nu)$. In terms of the abacus, evaluating $\mathrm{core}_e(\nu)$ coincides with sliding all beads up as far as possible in the runners of $\Ab{\nu}$. Since this process clearly produces a well-defined \abac, we deduce that the $e$-core of a partition is well-defined. We define the \emph{$e$-weight} of $\nu$, written $\mathrm{weight}_e(\nu)$, to be the number of $e$-hooks removed in evaluating $\mathrm{core}_e(\nu)$. Note that $|\nu|=|\mathrm{core}_e(\nu)|+e\,\mathrm{weight}_e(\nu)$. For $i=0,\dots,e-1$, we write $\nu^{(i)}$ for the partition which corresponds to the $i$-th runner of $\Ab{\nu}$ considered as an \abac with one single runner (with positions properly numbered). The $e$-tuple of partitions $(\nu^{(0)},\dots,\nu^{(e-1)})$ is called the \emph{$e$-quotient} of $\nu$. From the abacus interpretation of the weight, we immediately deduce that $\mathrm{weight}_e(\nu)=|\nu^{(0)}|+\dots+|\nu^{(e-1)}|$.

\begin{eg}
Let $\nu=(12,7^2,5,4,2,1^2)\vdash39$ and $e=4$.   Then $\mathrm{core}_4(\nu)=(2,1)\vdash3$, as we see from the following \abacs.
\[
\begin{array}{ccccc}
\abacus(lmmr,bbbb,nbbn,bnnb,nbnn,bbnn,nnnb,nnnn) & & & \abacus(lmmr,bbbb,bbbb,bbnb,nbnn,nnnn,nnnn,nnnn)\\[
32pt]
\Ab{\nu} & & & \Ab{\mathrm{core}_4(\nu)}
\end{array}
\]
We find that $\mathrm{weight}_4(\nu)=9$, and the $4$-quotient of $\nu$ is $((2,1),(1^2),\vn,(3,1))$.
\end{eg}


Now let $n$ be a positive integer, and consider the $p$-blocks of $S_n$; that is, the indecomposable summands of the algebra $kS_n$. It is a standard fact that each Specht module is a module for a single $p$-block. Conversely, every block contains at least one Specht module (because every simple module is a composition factor of a Specht module), so in order to describe the $p$-block structure of $S_n$ it suffices to say when two Specht modules lie the same block. 
%
The following famous result, known as the \emph{Nakayama conjecture}, gives a combinatorial condition to understand when two Specht modules belong to the same $p$-block of $S_n$.
\begin{thm}\label{nakconj}
Let $\alpha,\beta\in\calp_n$. The Specht modules $\spe{\alpha},\spe{\beta}$ belong to the same $p$-block of $S_n$ if and only if $\core{\alpha}=\core{\beta}$.
\end{thm}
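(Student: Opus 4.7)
The plan is to prove the two implications separately: necessity of the $p$-core condition via a central-character computation using Jucys--Murphy elements, and sufficiency via induction on the $p$-weight using the abacus combinatorics of Section \ref{secabac}. For the forward direction I would invoke the Jucys--Murphy elements $L_k=\sum_{i<k}(i,k)\in kS_n$. The elementary symmetric polynomials $e_r(L_1,\dots,L_n)$ are central in $kS_n$, and each acts on $\spe{\lambda}$ as the scalar obtained by evaluating $e_r$ at the multiset of contents of the nodes of $\lambda$, reduced modulo $p$. Since Specht modules in a common block share a central character, this forces $\resp{\alpha}=\resp{\beta}$. Passing to the abacus, the multiset $\resp{\nu}$ records the number of beads on each runner of the $p$-abacus display of $\nu$, and $\core{\nu}$ is determined precisely by these per-runner bead counts (since forming the $p$-core amounts to pushing every bead to the top of its runner), so for partitions of a fixed size the $p$-content and the $p$-core determine each other, giving $\core{\alpha}=\core{\beta}$.

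For the reverse direction, assume $\core{\alpha}=\core{\beta}$ and induct on the common $p$-weight $w$. The base case $w=0$ gives $\alpha=\beta$. For $w\ge 1$, any two abacus displays with identical bead counts per runner are connected by a sequence of single-bead moves within runners, each corresponding to removing one $p$-hook and adding another; so it suffices to treat the case where $\alpha$ and $\beta$ differ by one such exchange. Letting $\gamma$ be the common partition of $p$-weight $w-1$ obtained by removing the shared $p$-hook, one argues that $\spe{\alpha}$ and $\spe{\beta}$ both appear as composition factors of a module induced from $\spe{\gamma}$ via the Young subgroup $S_{n-p}\times S_p$ (using the Specht filtration furnished by Pieri's rule), and then that these induced constituents are forced into the block of $\spe{\gamma}$, which by induction is the common block of $\alpha$, $\beta$ and $\gamma$.

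The main obstacle is the reverse direction: the central-character computation for necessity is clean and short, but genuinely forcing two Specht modules with the same $p$-core into a single block requires real modular input. The delicate step is to extract a block-containment statement from the Pieri-style Specht filtration of the induced module, where one must verify that a suitable summand lies in a single block---a fact which itself relies on the $p$-content calculation from the forward direction applied at the appropriate level. An alternative, slightly softer route would replace this Pieri step with a counting argument, comparing the number of $p$-cores of partitions of $n$ against the number of $p$-blocks of $kS_n$ (the latter determined by $p$-regular conjugacy classes); the injection of blocks into $p$-cores supplied by the forward direction, together with such an equality of counts, would then force the converse.
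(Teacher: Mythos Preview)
The paper does not give a proof of this theorem. It is quoted in \cref{seccore} as a known background result, with the remark that it ``was stated by Nakayama in 1940 and proved independently by Brauer and Robinson''; no argument is supplied beyond this attribution. There is therefore nothing in the paper to compare your proposal against.

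That said, your sketch is a reasonable outline of one standard modern approach. The forward direction via symmetric functions in the Jucys--Murphy elements acting as central scalars, together with the identification of $p$-content and $p$-core on the abacus (which is precisely \cref{littthm} in the paper), is correct and clean. The reverse direction is, as you acknowledge, where the genuine modular input is needed; both the induction-on-weight argument through induced Specht filtrations and the counting shortcut you mention (matching the number of $p$-blocks of $kS_n$ against the number of $p$-cores arising from $\calp_n$) are viable routes in the literature, though neither is entirely trivial to make airtight.
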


\cref{nakconj} was stated by Nakayama in 1940 and proved independently by Brauer and Robinson. 
Thorough the paper we will abuse notation regarding a $p$-block $B$ of $S_n$ as the subset of partitions of $n$ which labels the Specht $kS_n$-modules belonging to $B$.

Clearly if two partitions of $n$ have the same $p$-core then they have the same $p$-weight. Given a $p$-block $B$ of $S_n$, we denote by $\gamma_B$ and $w_B$, respectively, the $p$-core and the $p$-weight of \emph{any} partition in $B$. It is clear that (the \abac of) all partitions in $B$ can be recovered starting from $\gamma_B$ and then sliding $w_B$ times beads down in $\Ab{\gamma_B}$ in all possible ways. 

We now give some more conditions equivalent to those in \cref{nakconj}. Note that nothing of what follows requires $p$ to be a prime number. We need all the \abacs to contain the same number of beads. Since the length of a partition of $n$ is at least $n$, to ensure this is enough to fix the (common) charge $C$ of the abaci to be at least $n$, maintaining the convention that $C\equiv0\ppmod p$. For every $\nu\in\calp_n$ and every $i=0,\dots,p-1$, let $b_i(\nu)$ denote the number of beads in the $i$-th runner of the \abac $\mathrm{Ab}_p(\nu)$. It turns out that the $p$-tuple $(b_0(\nu),\dots,b_{p-1}(\nu))$ is another invariant of the $p$-block to which $\nu$ belongs. Actually, by definition, the $p$-core of a partition is obtained by only sliding beads up in its \abac. It follows that the number of beads in every runner remains unchanged at the end of the process. In particular, if $\alpha$ and $\beta$ are in the same $p$-block $B$, then $b_i(\alpha)=b_i(\gamma_B)=b_i(\beta)$ for all $i=0,\dots,p-1$. (Note that the numbers $b_i$ depend on the choice of the charge; in fact $C$ can be recovered as $b_0+\dots+b_{p-1}$. If $C$ is increased by $p$, then each $b_i$ is increased by $1$.)

Analysing the integers just defined, we can prove a result we will need later.

\begin{propn}\label{prop6em}
Let $B$ be a $p$-block of $S_n$ and let $\alpha,\beta\in B$. Suppose that for some $h\ge1$ and some $i\in\{0,\dots,p-1\}$, $\alpha$ has a removable $h$-hook at runner $i$. Then at least one of the following occurs:
\begin{itemize}
\item $\alpha$ has an addable $h$-hook at runner $j\equiv i-h\ppmod p$;
\item $\beta$ has a removable $h$-hook at runner $i$.
\end{itemize}
\end{propn}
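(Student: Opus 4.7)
The plan is to reduce the claim to a counting identity on the abacus that is invariant on the block. For a partition $\nu$, let $T(\nu)$ denote the number of positions $x\equiv i\ppmod p$ such that position $x$ is a bead and position $x-h$ is a space in $\Ab\nu$; this counts precisely the removable $h$-hooks of $\nu$ at runner $i$. Similarly let $S(\nu)$ denote the number of positions $y\equiv j\ppmod p$ such that $y$ is a bead and $y+h$ is a space in $\Ab\nu$; this counts the addable $h$-hooks of $\nu$ at runner $j$.

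The key step is the identity $T(\nu) - S(\nu) = b_i(\nu) - b_j(\nu)$. To see it, write $T(\nu) = b_i(\nu) - N(\nu)$, where $N(\nu)$ is the number of beads at positions $x \equiv i \ppmod p$ such that $x-h$ is also a bead; and write $S(\nu) = b_j(\nu) - N'(\nu)$, where $N'(\nu)$ is defined analogously as the number of beads at positions $y\equiv j\ppmod p$ such that $y+h$ is a bead. The bijection $x\longleftrightarrow y = x-h$ pairs these two sets of bead-pairs exactly, so $N(\nu) = N'(\nu)$ and the identity follows.

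Since $\alpha$ and $\beta$ lie in the same block $B$, the discussion preceding the proposition gives $b_i(\alpha) = b_i(\beta)$ and $b_j(\alpha) = b_j(\beta)$, hence $T(\alpha) - S(\alpha) = T(\beta) - S(\beta)$. By hypothesis $T(\alpha) \ge 1$. If moreover $\beta$ has no removable $h$-hook at runner $i$, i.e.\ $T(\beta) = 0$, then $S(\alpha) - S(\beta) = T(\alpha) \ge 1$, and therefore $S(\alpha) \ge 1$, providing the desired addable $h$-hook of $\alpha$ at runner $j$. The combinatorics here is essentially routine; the only point requiring care is to fix the conventions so that ``at runner $i$'' refers to the runner of the bead that is being moved, which is what makes the bijection $N=N'$ work cleanly, and this is the only place where the choice $j\equiv i-h\ppmod p$ enters.
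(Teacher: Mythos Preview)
Your argument is correct: the identity $T(\nu)-S(\nu)=b_i(\nu)-b_j(\nu)$, established via the bijection $x\leftrightarrow x-h$ between bead--bead pairs, is exactly the right invariant, and the block condition $b_i(\alpha)=b_i(\beta)$, $b_j(\alpha)=b_j(\beta)$ finishes things as you say. The paper does not give its own proof here but simply cites \cite[Proposition~6]{elmu} for the case $h=1$ and asserts the general case follows identically; your counting argument is precisely the natural abacus proof one would expect in that reference, carried out directly for arbitrary~$h$.
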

The proof of this result is given in \cite[Proposition 6]{elmu} for $h=1$. Our slight generation follows with the same arguments.

Another reinterpretation of the combinatorial condition for two Specht modules to be in the same block concerns $p$-contents. Recall that the $p$-content of a partition $\la$ is the multiset of $p$-residues of the nodes of $\la$. The following result goes back to Littlewood \cite{litt}.

\begin{thm}\label{littthm}
Suppose $\la,\mu\in\calp_n$. Then $\la$ and $\mu$ have the same $p$-core \iff they have the same $p$-content.
\end{thm}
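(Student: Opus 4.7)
The plan is to prove both implications via abacus arithmetic.

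For $(\Rightarrow)$: since consecutive rim nodes differ in content by exactly $+1$ (each rim step is either a right-move or an up-move, both increasing $c-r$ by $1$), the $p$ nodes of any removable $p$-hook contribute exactly one node of each residue in $\zpz$. Iterating hook removals, I obtain as multisets
\[
\resp{\la} = \resp{\core{\la}} + \mathrm{weight}_p(\la) \cdot \{\bar0, \bar1, \dots, \overline{p-1}\},
\]
and similarly for $\mu$. Since $|\la| = |\mu| = n$ forces $\mathrm{weight}_p(\la) = \mathrm{weight}_p(\mu)$ whenever $\core{\la} = \core{\mu}$, the $p$-contents coincide.

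For $(\Leftarrow)$, the plan is to recover $\core{\la}$ directly from $\resp{\la}$. Fix a charge $C$ divisible by $p$ and large enough to accommodate both partitions, and set $k = C/p$. For a partition $\nu$, write $c_i(\nu)$ for the multiplicity of $\bar\imath$ in $\resp{\nu}$ and $\beta_i(\nu) = b_i(\nu) - k$ for the charge-invariant normalised bead count on runner $i$ of $\Ab{\nu}$. The crux is the cyclic identity
\[
c_i(\nu) - c_{i+1}(\nu) = \beta_i(\nu) \qquad \text{for every } i \in \zpz,
\]
which I would prove by induction on $|\nu|$: the base case $\nu = \vn$ is immediate, and when a node of residue $\bar\jmath$ is added to $\nu$, the count $c_j$ grows by $1$ (other $c_i$ unchanged), while the corresponding move in $\Ab{\nu}$ transfers a bead from runner $j-1$ to runner $j$, decreasing $\beta_{j-1}$ by $1$ and increasing $\beta_j$ by $1$; the two sides change compatibly for every $i$, the only equations affected being those at $i = j-1$ and $i = j$.

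Granted the identity, $\resp{\nu}$ determines the tuple $\beta(\nu) = (\beta_0(\nu), \dots, \beta_{p-1}(\nu))$. Since sliding beads up within each runner (the operation producing $\Ab{\core{\nu}}$) preserves bead counts per runner, $\beta(\nu) = \beta(\core{\nu})$; and a $p$-core is uniquely determined by its runner bead counts at any fixed charge. Therefore $\resp{\la} = \resp{\mu}$ implies $\beta(\la) = \beta(\mu)$, and hence $\core{\la} = \core{\mu}$. The main obstacle is the cyclic identity: while the induction is essentially bookkeeping, one must be careful with the correspondence between adding a residue-$\bar0$ node and the bead move from runner $p-1$ across the abacus boundary to runner $0$.
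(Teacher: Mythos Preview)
Your argument is correct. The forward direction is exactly the observation the paper itself uses later (in the proof of \cref{beltcore}): a rim $p$-hook has contents $a,a+1,\dots,a+p-1$ and hence contributes one node of each residue. For the converse, your cyclic identity $c_i(\nu)-c_{i+1}(\nu)=\beta_i(\nu)$ is valid and the induction is clean; the boundary case $j=\bar0$ is not actually special once indices are read in $\zpz$, since adding a residue-$\bar0$ node moves a bead from runner $p-1$ to runner $0$ in the same way.

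As for comparison: the paper does not supply a proof of this statement at all---it is quoted as a classical result of Littlewood and cited to \cite{litt}. So there is no in-paper argument to compare against. Your proof is a standard and self-contained one; the only stylistic remark is that the first implication already gives $\resp{\nu}=\resp{\core\nu}+w\cdot\{\bar0,\dots,\overline{p-1}\}$, so one could alternatively finish $(\Leftarrow)$ by noting that equal $p$-contents force equal $p$-contents of the cores (subtract off the common weight multiple, since $|\la|=|\mu|$ and equal multisets of residues determine the weight via the minimum multiplicity), and then observing that distinct $p$-cores have distinct $p$-contents---but your bead-count recovery is cleaner and avoids that detour.
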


So we can label a $p$-block $B$ of $S_n$ by a multiset of cardinality $n$ of elements of $\zpz$, corresponding to the $p$-content of any partition in $B$. Using this interpretation, we immediately gain a useful fact which will come in handy in \cref{secbelt}.

\begin{propn}\label{beltcore}
Let $m$ be an integer with $m\ge p$. If $\mulam\in\calp_{m-p,m}$ and $\resp{\mulam}=\{\widebar0,\dots,\widebar{p-1}\}$, then $\core{\la}=\core{\mu}$ and $\mathrm{weight}_p(\la)=\mathrm{weight}_p(\mu)-1$.
\end{propn}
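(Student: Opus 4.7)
The plan is to work in the abacus and compare runner bead counts. Fix a common charge $C\geq l(\mu)$ with $C\equiv 0\ppmod p$, so that both $\Ab\la$ and $\Ab\mu$ have $C$ beads. By the discussion in \cref{seccore}, $b_i(\nu)=b_i(\core\nu)$ for every $i$, and a $p$-core is recovered uniquely from its runner bead-count tuple $(b_0,\dots,b_{p-1})$ once the charge is fixed. Hence it suffices to prove $b_i(\la)=b_i(\mu)$ for every $i\in\{0,\dots,p-1\}$.

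The key local observation concerns a single addable node. If $\nu'$ is obtained from $\nu$ by adding an addable node $(r,c)$ of residue $\widebar s$, then in the abacus this move sends the bead at position $c-1-r+C$ into the adjacent space at position $c-r+C$. Since $C\equiv 0\ppmod p$ and $c-r\equiv s\ppmod p$, these two positions lie on runners $\widebar{s-1}$ and $\widebar s$ respectively, so the addition decreases $b_{\widebar{s-1}}$ by one, increases $b_{\widebar s}$ by one, and leaves all other bead counts unchanged.

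Now build $\mu$ from $\la$ by adding the $p$ nodes of $\mulam$ one at a time in any order making each intermediate diagram a partition; such an order exists because $\mulam$ admits a standard tableau. By the hypothesis $\resp{\mulam}=\{\widebar 0,\dots,\widebar{p-1}\}$, each residue $\widebar s$ is realised by exactly one of the added nodes. Summing the per-step changes, runner $\widebar i$ picks up $+1$ (from the residue-$\widebar i$ node) and $-1$ (from the residue-$\widebar{i+1}$ node), for a net change of zero. Hence $b_i(\la)=b_i(\mu)$ for every $i$, and so $\core\la=\core\mu$. The weight statement $\mathrm{weight}_p(\la)=\mathrm{weight}_p(\mu)-1$ is then immediate from $|\mu|-|\la|=p$ together with $|\nu|=|\core\nu|+p\cdot\mathrm{weight}_p(\nu)$. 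Nothing in this argument is genuinely hard; the one point deserving care is the precise bead-to-runner identification in a single addable-node move, and it is precisely the choice $C\equiv 0\ppmod p$ coupled with the hypothesis on $\resp{\mulam}$ that makes the runner contributions cancel in pairs.
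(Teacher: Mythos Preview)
Your argument is correct. It differs from the paper's proof in its mechanism: the paper invokes \cref{littthm} (Littlewood's theorem that $p$-content determines $p$-core) and observes that the $p$-content of $\mu$ equals that of $\la$ plus one copy of each residue, which is exactly the $p$-content of the block of $S_m$ with core $\core\la$. You instead bypass \cref{littthm} entirely and track runner bead counts on the abacus directly, using the local fact that adding a node of residue $\widebar s$ shifts one bead from runner $\widebar{s-1}$ to runner $\widebar s$; summing over all residues gives zero net change on every runner. Your route is more self-contained (it needs only the abacus machinery of \cref{secabac,seccore}, not the extra input of \cref{littthm}) and in effect reproves the relevant special case of Littlewood's theorem by hand, whereas the paper's route is shorter because \cref{littthm} is already available.
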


\begin{proof}
Let $\nu=\core\la$, let $B$ be the block of $S_{m-p}$ containing $\la$, and let $C$ be the block of $S_m$ with $p$-core $\nu$. Observe that the nodes of a rim $p$-hook have contents $a,a+1,\dots,a+p-1$ for some integer $a$, and therefore have $p$-residues $\widebar0,\widebar1,\dots,\widebar{p-1}$ in some order. Therefore the $p$-content of $C$ is obtained from the $p$-content of $B$ by adding one copy of each element $\widebar0,\widebar1,\dots,\widebar{p-1}$. From \cref{littthm}, this means that $\mu$ lies in $C$.
\end{proof}

\subsection{The degenerate affine Hecke algebra}\label{secdaha}

The next two sections are devoted to introducing the main algebraic objects in the paper. Before meeting the algebra which is at the heart of the treatment, we introduce an auxiliary structure and some interesting related concepts. Let $A$ be a field and $n$ be a positive integer. 
\begin{defn}\label{defdaha}
The \emph{degenerate affine Hecke algebra of degree $n$ over $A$}, denoted by $\calh_n^A$, is the unital, associative $A$-algebra with generators $z_1,\dots,z_n,s_1,\dots,s_{n-1}$ subject to the following relations:
\begin{enumerate}
\item $z_iz_j=z_jz_i$ for all $i,j$;
\item $(s_is_j)^{m_{ij}}=1$, where $m_{ii}=1$, $m_{i(i+1)}=3$ and $m_{ij}=2$ for $|i-j|>1$;
\item $s_iz_j=z_js_i$, for $j\neq i,i+1$;
\item $s_iz_i=z_{i+1}s_i-1$.
\end{enumerate}
\end{defn}
Looking at the relation $(1)$, we see that the \emph{polynomial} generators $z_1,\dots,z_n$ generate a subalgebra $A[z_1,\dots,z_n]$ of $\calh_n^A$ isomorphic to the polynomial algebra in $n$ indeterminates over $A$. Moreover, this is a maximal commutative subalgebra of $\calh_n^A$. Considering relations in $(2)$, we also observe that the \emph{Coxeter} generators $s_1,\dots,s_{n-1}$ generate a subalgebra of $\calh_n^A$ isomorphic to $AS_n$. In fact, as $A$-modules, $\calh_n^A\cong A[z_1,\dots,z_n]\otimes_A AS_n$. We remark that the centre $Z(\calh_n^A)$ of $\calh_n^A$ is given by the ring of symmetric polynomials in the indeterminates $z_1,\dots,z_n$ \cite[Theorem 3.3.1]{klbook}. 

We now focus on the polynomial subalgebra in the case where $A$ is an algebraically closed field. Since $A[z_1,\dots,z_n]$ is commutative, every irreducible $A[z_1,\dots,z_n]$-module $M$ is one-dimensional and corresponds (via a 1:1 correspondence) to the $n$-tuple $(a_1,\dots,a_n)\in A^n$ such that $z_i$ acts as the scalar $a_i$ on $M$. We define the \emph{\forch} of a $A[z_1,\dots,z_n]$-module as the formal $\mathbb{Z}$-linear combination of the elements of $A^n$ corresponding to its composition factors. The \emph{\forch} of a finite dimensional $\calh_n^A$-module $M$ is then defined as the \forch of its restriction $M\downarrow_{A[z_1,\dots,z_n]}$. More formally, following \cite{klbook}, one identifies $A[z_1,\dots,z_n]$ with the \emph{parabolic subalgebra}
\[
\calh_{(1,\dots,1)}^A\cong\calh_1^A\otimes_A\dots\otimes_A\calh_1^A
\]
of $\calh_n^A$, and defines the \forch $\ch{M}$ of $M$ as the image of the class $[M]$ in the Groethendieck group $K(\calh_n^A\textrm{-mod})$, under the homomorphism
\[
\mathrm{ch}:K(\calh_n^A\textrm{-mod})\longrightarrow K(\calh_{(1,\dots,1)}^A\textrm{-mod})
\]
induced by the restriction from $\calh_n^A$ to $\calh_{(1,\dots,1)}^A$. By \cite[Theorem 5.3.1]{klbook}, $\mathrm{ch}$ is injective, therefore the composition factors of an $\calh_n^A$-module are determined by its \forch. This fact will be particularly useful in what follows due to the strong relationship between the degenerate affine Hecke algebra and the algebra we are going to meet in the next section.

Before going on we associate another important object to any indecomposable $\calh_n^A$-module $M$. Let $(a_1,\dots,a_n)$ be a summand in the \forch of $M$. The \emph{\cench} of $M$ is defined as the map $Z(\calh_n^A)\rightarrow A$ such that $f\mapsto f(a_1,\dots,a_n)$ for all $f\in Z(\calh_n^A)$. Since $M$ is indecomposable, all summands of $\ch{M}$ lie in the same $S_n$-orbit \cite[Lemma 4.2.2]{klbook}, so the previous definition does not depend on the choice of the summand $(a_1,\dots,a_n)$. So we say that $\{a_1,\dots,a_n\}$ is the \cench of $M$ and we write $\cch{M}$ for it. Then two indecomposable $\calh_n^A$-modules lie in the same block \iff they have the same \cench.

\subsection{The centraliser algebra for symmetric groups}\label{seccentalg}
We are ready to meet the algebraic protagonist of the present work. Most of the material in this section is taken from \cite{elmu}. Let $A$ be a commutative ring and take $m,l$ non-negative integers with $l\le m$ and set $n\vcentcolon=m-l$. Then $AS_l$ is an $A$-subalgebra of the group ring $AS_m$. We define the \emph{centraliser algebra} $\cenalgA$ to be the centraliser of $AS_l$ in $AS_m$; that is, the algebra
\[
\cenalgA\vcentcolon=\{a\in AS_m\ |\ ab=ba\ \text{for every}\ b\in AS_l\}.
\]
This algebra has been studied extensively by Ellers and Murray \cite{elmu1,elmu}, following earlier more general work on centraliser algebras for subgroups of finite groups by Ellers. However, $\cenalgA$ remains poorly understood, and in particular its blocks are not yet known. This is the focus of the present paper.

By the very effective description in \cite[Section 2.1]{klbook} we know a straightforward generating set for $\cenalgA$. This is the union of three subsets:
\begin{itemize}
\item the centre $Z(AS_m)$;
\item the symmetric group (isomorphic to $S_n$) on the symbols $\{l+1,\dots,m\}$;
\item the set of \emph{Jucys--Murphy elements} $L_j=(1\ j)+(2\ j)+\dots+(j-1\ j)$ for $j=l+1,\dots,m$.
\end{itemize}


We introduce a family of $\cenalgA$-modules which will be at the heart of our treatment. Suppose that $\la\vdash l$ and $\mu\vdash m$. Then the $A$-module $\spe{\mulam}=\Hom_{AS_l}(\spe{\la},\spe{\mu}\downarrow_{S_l})$ introduced at the end of \cref{secskew} is naturally a module for $\cenalgA$, via the action
\[
(c\phi)(v)=c\phi(v)\qquad\text{for }c\in\cenalgA,\ \phi\in\spe{\mulam},\ v\in\spe{\la}.
\]
As already remarked, $\spe{\mulam}$ is non-trivial if and only if $\mulam\in\calp_{l,m}$. To emphasise the similarity with the symmetric group case, we refer to these $\cenalgA$-modules as \emph{Specht modules}. 

We now recall our $p$-modular system $(R,\bbf,k)$, where $p$ is the prime characteristic of the residue field $k$. Since $\mathrm{char}(\bbf)=0$ the algebras $\bbf S_m$ and $\bbf S_l$ are both semisimple by Maschke's Theorem. The general theory of centraliser algebras then implies that $\cenalgF$ is semisimple as well, with a complete irredundant set of simple modules given by the set
\[
\{\spe{\mulam}\ |\ \mulam\in\calp_{l,m}\}
\]
of Specht modules.

Things change if the fraction field $\bbf$ is replaced by the residue field $k$: the centraliser algebra $\cenalgk$ is in general not semisimple and its Specht modules generally fail to be simple. The decomposition number problem for $\cenalgk$ asks for the composition factors of the Specht modules; unfortunately, at present we do not have a good labelling of simple module for $\cenalgk$. This paper is concerned with determining the \emph{$p$-blocks} of $\cenalgk$. The Specht module $\spe\mulam$ over $k$ is a $p$-modular reduction of the corresponding (simple) Specht module over $\bbf$. So (analogously to the case of symmetric groups) describing the $p$-block structure of $\cenalgk$ reduces to finding the appropriate partition of the set of Specht $\cenalgk$-modules. For this, we use an approach pioneered by Ellers and Murray, which we describe in the next section.

\subsection{Combinatorial blocks}\label{seccombbl}

Let $l,m,n$ be as in \cref{seccentalg}. We have pointed out that in order to find the decomposition of $\cenalgk$ into $p$-blocks, we need to look for a way to partition the set of Specht $\cenalgk$-modules. We know by \cref{nakconj} how the Specht modules for the symmetric group split into $p$-blocks, this is a characterisation based on the concept of $p$-core. Our main conjecture (which is implicit in the work of Ellers and Murray \cite{elmu}) predicts that the case of the centraliser algebra works in the same way.

\begin{conj}\label{mainconj}
Let $\mu_1\setminus\la_1,\mu_2\setminus\la_2\in\calp_{l,m}$. The Specht $\cenalgk$-modules $\spe{\mu_1\setminus\la_1},\spe{\mu_2\setminus\la_2}$ belong to the same $p$-block of $\cenalgk$ if and only if $\core{\mu_1}=\core{\mu_2}$ and $\core{\la_1}=\core{\la_2}$.
\end{conj}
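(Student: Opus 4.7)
The biconditional splits into two directions. The \emph{only if} direction is the easy one: as already noted in the introduction, Ellers and Murray have shown that every combinatorial block is a union of actual $p$-blocks of $\cenalgk$, so Specht modules in the same $p$-block automatically share both $p$-cores. The substantive content is the converse --- that any two Specht modules inside a single combinatorial block lie in the same $p$-block.

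My plan for the converse would be to imitate the classical proof of the Nakayama conjecture (\cref{nakconj}), lifted to the skew setting. Fix a combinatorial block $B$ and use the abacus interpretation from \cref{secabac,seccore}: pairs $\mulam\in B$ correspond to pairs $(\Ab\la,\Ab\mu)$ whose runner bead-counts match those of the $p$-cores $\core\la$ and $\core\mu$. Any two such pairs can be linked by a sequence of \emph{elementary bead-slides}, pushing one bead one position on some runner of $\Ab\la$ or $\Ab\mu$, compensated where necessary in the other abacus to preserve the containment $\la\subseteq\mu$ and the size constraints. This reduces the problem to showing that a single elementary slide preserves the $p$-block of the associated Specht module, which in turn is a shared-composition-factor question.

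To produce shared composition factors, the natural tool is the degenerate affine Hecke algebra machinery of \cref{secdaha}. The Jucys--Murphy elements $L_{l+1},\ldots,L_m$ lie in $\cenalgk$ and, together with $Z(kS_m)$ and the symmetric group on $\{l+1,\ldots,m\}$, generate the whole algebra --- a decomposition mirroring the polynomial/Coxeter decomposition of $\calh_n^k$. One would then define a \cench for Specht $\cenalgk$-modules, show it is controlled by the multisets of $p$-residues of $\la$ and of $\mulam$ (by analogy with the symmetric-group case, cf.\ \cref{littthm}), and deduce that the central character is constant across $B$. The propagating step across an elementary slide (together with \cref{prop6em} for handling ``blocked'' moves) would then need to promote equal central characters into a shared composition factor.

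The main obstacle, I expect, is the final implication: passing from \emph{same central character} to \emph{same block} in the centraliser setting, since $\cenalgk$ is not a group algebra and Brauer-style block-idempotent arguments are not directly available. I would expect that pushing such an argument through in full generality is out of reach, and that a realistic proof --- matching the scope announced in the abstract --- would restrict attention to combinatorial blocks whose skew-partitions have no repeated entries in their $p$-content, where one can instead establish connectedness of (a computable approximation to) the decomposition matrix of $B$ directly. This would yield the ribbon and belt cases while leaving the full conjecture open.
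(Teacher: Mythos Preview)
Your proposal is not a proof but a strategic outline, and you are right to concede as much in your final paragraph: the statement is a conjecture which the paper does \emph{not} prove in general. The paper establishes only the special cases announced in the abstract (ribbon blocks and belt blocks, \cref{ribcor,beltcor}), and you correctly anticipate that restriction. At the coarsest level your closing suggestion --- prove connectedness of an approximation to the decomposition matrix --- is exactly the paper's strategy. But the proposal is missing the machinery that makes that strategy work, and your earlier concrete suggestions point in the wrong direction.

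The bead-slide/central-character route you sketch first is a dead end, and not merely because of the obstacle you name. Central characters are indeed constant on a combinatorial block (this is immediate from \cref{littthm}), but that is already implicit in the Ellers--Murray result you cite for the easy direction, and it buys nothing further. The idea of linking two skew-partitions by elementary bead-slides and showing each slide preserves the block is not what the paper does and is unlikely to work: a single slide on $\Ab\la$ or $\Ab\mu$ typically destroys the containment $\la\subseteq\mu$, and there is no obvious local move on the pair that both stays in $B$ and visibly preserves a composition factor.

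What the paper actually does in the ribbon/belt case is quite different. It passes from skew-partitions to \emph{$p$-shapes} (equivalence classes with the same formal character), introduces \emph{maximal} $p$-shapes (ribbons, or belts in the belt case) whose formal characters are pairwise disjoint (\cref{disjch,disjch2}), and shows that every $p$-shape decomposes as a sum of such maximal refinements (\cref{refs,refs2}). Two $p$-shapes then share a composition factor iff they share a refinement, which is governed by a combinatorial \emph{distance} $\dist XY$. The heart of the argument is \cref{grbthm}: an intricate inductive abacus construction showing that whenever $\dist XY>0$ one can find $Z\in\calb$ strictly closer to both. For belt blocks there is the additional complication that $\dist XY=0$ does not suffice when $\arrX\cup\arrY$ is a directed cycle, handled separately via \cref{XXi,XiXc}, together with an explicit construction (\cref{beltmod}) of $\calh_p^k$-modules realising the formal characters of belts. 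None of this structure --- distance, refinements, the inductive reduction, or the belt modules --- appears in your outline, and without it the connectedness claim is just a restatement of the goal.
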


We can consider \cref{mainconj} in terms of idempotents. We say that a non-zero central idempotent of $\cenalgk$ is a \emph{block idempotent} if it is primitive, i.e.\ it cannot be written as the sum of two non-zero central idempotents. The blocks of $\cenalgk$ are then precisely the algebras $e\cenalgk$ for $e$ a primitive central idempotent.

In the case of the symmetric group $S_n$, \cref{nakconj} tells that there is a bijection between the set of $p$-block idempotents of $kS_n$ and the set of $p$-core partitions which are the $p$-core of at least one partition in $\calp_n$. Given $p$-blocks $B$ and $B'$ of $kS_m$ and $kS_l$ respectively, let $e_B$ and $e_{B'}$ the corresponding $p$-block idempotents. Then it is straightforward to see that $\ebeb$ is a central idempotent of $\cenalgk$, so is a sum of block idempotents, and $\ebeb\cenalgk$ is a (possibly zero) sum of $p$-blocks of $\cenalgk$. Moreover, $\ebeb$ acts as the identity on $\spe{\mu\sm\la}$ if $\spe\mu$ lies in $B$ and $\spe\la$ lies in $B'$ (that is, if $\core\mu=\ga_B$ and $\core\la=\ga_{B'}$), and otherwise $\ebeb\spe{\mu\sm\la}=0$. If $\ebeb\neq0$, then we refer to $\ebeb\cenalgk$ as a \emph{\conjbl} of $\cenalgk$. \cref{mainconj} then predicts that every combinatorial block is a $p$-block. Since the sum of the elements $\ebeb$ over all choices of $B$ and $B'$ is $1$, this would then mean that every $p$-block of $\cenalgk$ is a combinatorial block. We will abuse notation by identifying a combinatorial block with the set of Specht modules in that combinatorial block, or with the set of labelling \skewps.


In \cite{elmu}, Ellers and Murray prove \cref{mainconj} in the case $n\ls3$. The aim of the present paper is to prove \cref{mainconj} for a large family of \conjbls of $\cenalgk$. We follow the idea of Ellers and Murray \cite[Section 1.2]{elmu} to exploit the relationship between the centraliser algebra $\cenalgk$ and the degenerate affine Hecke algebra $\calh_n^k$. Their work shows that every $\cenalgk$-module naturally admits the structure of an $\calh_n^k$-module: the symmetric group on $\{l+1,\dots,m\}$ and the Jucys--Murphy elements $L_{l+1},\dots,L_m$ generate a subalgebra of $\cenalgk$ which is a quotient of $\calh_n^k$; restricting to this subalgebra and then inflating yields an $\calh_n^k$-module. With respect to this structure, every Specht module is equipped with a \forch and a \cench and it is an easy task to combinatorially evaluate these. Crucially, two $\cenalgk$-modules in the same combinatorial block have the same composition factors \iff the corresponding $\calh_n^k$-modules do. This enables us to work with formal characters of Specht modules to prove the $p$-block structure.

Let $\mulam\in\calp_{l,m}$ and consider a standard $\mulam$-tableau $T$. 
We associate to $T$ the permutation $a_T=(a_1,\dots,a_n)$ of $\resp{\mulam}$ such that $a_j$ is the $p$-residue of the node containing with number $j$ in $T$, for $j=1,\dots,n$. The \forch of the Specht module $\spe{\mulam}$, we denote by $\ch{\mulam}$, is the formal sum of the $n$-tuples $a_T$ with $T$ varying in the set of standard $\mulam$-tableaux. 

\begin{eg}
Let $p=3$ and $\mulam=\sk{(4^2,2)}{(2^2,1)}\in\calp_{5,10}$. The \forch of the Specht $\calc_{5,10}^k$-module $\spe{\mulam}$ is as follows (we draw the \skewd $\mulam$ filling every node with its 3-residue, omitting overlines):
\[
\begin{tikzpicture}[scale=1]
\tyng(0cm,0cm,4,4,2)
\Ylinethick{1.5pt}
\tgyoung(0cm,0cm,::;2;0,::;1;2,:;2)
\end{tikzpicture}
\]
\[
2\cdot(2,2,0,1,2)+2\cdot(2,2,1,0,2)+2\cdot(2,0,1,2,2)+2\cdot(2,1,0,2,2)+(2,0,2,1,2)+(2,1,2,0,2).
\]
\end{eg}

By construction, every term of the \forch $\ch{\mulam}$ is a permutation of the $p$-content of $\mulam$. The \cench of the Specht module $\spe{\mulam}$, as an $\calh_n^k$-module, then exactly coincides with the multiset $\resp{\mulam}$. We observe the following fact.

\begin{propn}
Let $\mu_1\setminus\la_1$ and $\mu_2\setminus\la_2$ be \skewps in the same \conjbl of $\cenalgk$. Then $\resp{\mu_1\setminus\la_1}=\resp{\mu_2\setminus\la_2}$.
\begin{pf}
This follows from \cref{littthm}.
\end{pf}
\end{propn}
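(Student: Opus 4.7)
The plan is almost immediate from the definitions: unpack what ``same combinatorial block'' means in terms of $p$-cores, invoke \cref{littthm} to translate $p$-cores into $p$-contents, and observe that the $p$-content of a \skewp is the multiset difference of the $p$-contents of the two partitions involved.

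More concretely, first I would recall that by the definition of a \conjbl given just before \cref{mainconj}, the hypothesis that $\mulamone$ and $\mulamtwo$ lie in the same \conjbl is equivalent to the two conditions $\core{\mu_1}=\core{\mu_2}$ and $\core{\la_1}=\core{\la_2}$. Applying \cref{littthm} to each pair, this yields
\[
\resp{\mu_1}=\resp{\mu_2}\qquad\text{and}\qquad\resp{\la_1}=\resp{\la_2}.
\]

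Next I would observe that for any \skewp $\mulam$, the \skewd $[\mulam]=[\mu]\setminus[\la]$ is just the set-theoretic difference of Young diagrams, so the multiset of $p$-residues decomposes as
\[
\resp{\mu}=\resp{\la}\sqcup\resp{\mulam},
\]
i.e.\ $\resp{\mulam}=\resp{\mu}-\resp{\la}$ as a multiset difference. Applying this to both $\mulamone$ and $\mulamtwo$ and combining with the equalities of $p$-contents of the individual partitions above gives $\resp{\mulamone}=\resp{\mulamtwo}$, as required.

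There is no real obstacle here: this is a formal consequence of \cref{littthm} together with the additivity of $p$-content under disjoint unions of nodes. The only thing worth being mildly careful about is to treat everything as multisets rather than sets, since $p$-residues certainly repeat in general.
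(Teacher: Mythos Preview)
Your proposal is correct and is exactly the argument the paper has in mind: the paper's proof is just the single line ``This follows from \cref{littthm}'', and you have spelled out precisely the two steps implicit in that sentence (translating equal $p$-cores into equal $p$-contents via \cref{littthm}, then taking the multiset difference).
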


Motivated by the above result, we say that a \conjbl $B$ has \cench $\cch{B}$ equal to the $p$-content of \emph{any} \skewp in $B$.

One of the main tools in the paper is the \emph{decomposition matrix}. Let $B$ be a \conjbl and let $D_B$ be the matrix whose rows are indexed by $B$ and columns by those simple $\daha$-modules appearing as composition factors of some Specht module in $B$. To detect the entries of the row of $D_B$ labelled by the \skewp $\mulam$, write $\ch{\mulam}$ as a $\mathbb{Z}$-linear combination of \forchs of simple $\calh_n^k$-modules. The coefficients in this expression give the desired decomposition numbers. $B$ is then a $p$-block of $\cenalgk$ if and only if the decomposition matrix $D_B$ is a connected matrix (i.e.\ if it cannot be put in block-diagonal form by permuting rows and columns).

We end this section and this chapter with a crucial remark. 

\begin{rmk}
Given a \skewp $\mu\sm\la$, fill the boxes of the skew-diagram $\mu\sm\la$ with their $p$-residues, and define the \emph{\conf} of $\mu\sm\la$ to be the multiset of \conncomps of the resulting diagram.  It is easy to see that if two \skewps in $B$ have the same \conf, then the corresponding Specht modules have the same formal character, and therefore the same composition factors (with multiplicity) as $\calh_n^k$-modules. This means that for our purposes, we can largely shorten the number of rows of $D_B$ relabelling them with the set $\calb$ that comes out quotienting $B$ by the equivalence relation that identifies \skewps with the same \conf. We say that $\calb$ is the \emph{set of \confs} of $B$. We regard the elements of $\calb$ as $\daha$-modules and when talking about the \forch and the composition factors of a \conf $X$ we mean those of \emph{any} Specht module in $B$ whose \conf is $X$. Also, saying that a \conf belongs to a \conjbl we mean that the \conjbl contains a \skewp with that \conf.

\end{rmk}

When drawing \confs, we follow \cite{elmu} by drawing the \conncomps in a diagonal line in arbitrary order.

For example, take $p=3$, and consider the \skewps $(4,1)\sm(2)$ and $(3,2)\sm(2)$:
\[
\begin{tikzpicture}[scale=1]
\tyng(0cm,0cm,4,1)
\tyng(4cm,0cm,3,2)
\Ylinethick{1.5pt}
\tgyoung(0cm,0cm,::;20,2)
\tgyoung(4cm,0cm,::;2,20)
\end{tikzpicture}
\]
Both \skewps have the same $3$-shape and the corresponding Specht modules both have formal character $(2,0,2)+2\cdot(2,2,0)$.

\section{Ribbon blocks and belt blocks}\label{rbsec}

This chapter is devoted to establishing \cref{mainconj} for a special family of \conjbls of the centraliser algebra. Again let $m,l$ be non-negative integers with $m\ge l$ and let $n=m-l$. Recall that $(R,\bbf,k)$ is a $p$-modular system where $p$ denotes the (prime) characteristic of the residue field $k$. Let $B$ be a \conjbl of $\cenalgk$. The \cench of $B$ is a multiset of elements of $\zpz$. We will be concerned with the case where there are no repeated entries in the \cench, so that the \cench is a subset of $\zpz$. In this case clearly $n\ls p$ and we say that $B$ is a \emph{\grb} if $n<p$ (so that the \cench is a proper subset of $\zpz$), or a \emph{\belbl} if $n=p$. We will assume for the rest of \cref{rbsec} that $B$ is either a \grb or a \belbl.

In the last section we explained that when working modulo $p$ we can identify \skewps with the same \forch (since the rows of the decomposition matrix $D_B$ labelled by them are identical), so from now on we will be concerned with just the set $\calb$ of \confs of $B$. Firstly it is a good idea to give a visualisation of the elements of $\calb$. If $\mulam$ is a \skewp in $B$, then its \skewd does not contain a $2\times2$-subdiagram (because otherwise it would have boxes with equal $p$-residue, contrarily to our assumption on $B$) and hence it is a disjoint union of rim-hooks of $\mu$. If $X$ is the \conf of $\mulam$, we refer to a \emph{\conncomp} of $X$ for any of these rim-hooks and we say that $X$ is a \emph{\rib} if $\mulam$ is connected, i.e.\ it is a single rim-hook of $\mu$. 
For example, if $p>n=3$, and $B$ has \cench $\{\widebar0,\widebar1,\widebar2\}$, then the \confs in $\calb$ are among the nine drawings below (the first four on the left being the possible \ribs).

\newcommand\sepp{\hspace*{18pt}}

\[
\begin{array}{c@{\sepp}c@{\sepp}c@{\sepp}c@{\sepp}c@{\sepp}c@{\sepp}c@{\sepp}c@{\sepp}c}
\young(2,1,0)
&
\young(012)
&
\young(12,0)
&
\gyoung(:;2,01)
&
\gyoung(:;2,:;1,0)
&
\gyoung(:;12,0)
&
\gyoung(:;2,1,0)
&
\gyoung(::;2,01)
&
\gyoung(::;2,:;1,0)
\end{array}
\]

We close this long preface noting that, since there are no repetitions in $\cch{B}$, if $X$ is a \conf in $\calb$, there is no risk of confusion writing $\bxi$ for the unique node labelled by $i$ in $X$, for every $i\in\cch{B}$.

\subsection{\Grbs}\label{secrib}

In this section we fix a \grb $B$, with $\calc=\calc(B)\subset\zpz$ its \cench. Our aim is to prove \cref{mainconj} for $B$.

Before starting we give another combinatorial perspective to the \confs in a \grb. The reason why we do this will be clarified in the next section. We define an \emph{\arrg} to be a directed graph with vertex set $\calc$ such that every arrow has the form $i\rightarrow i+1$ or $i+1\rightarrow i$. Given a \conf $X$, we define an associated \arrg $\arrX$ by drawing an arrow $i\rightarrow i+1$ [resp.\ $i+1\rightarrow i$] whenever $\bxi$ lies on the left of [resp.\ below] $\bxipo$ in $X$, or no arrow between $i$ and $i+1$ if these nodes lie in different \conncomps of $X$. It is clear that the set of all possible \confs is in bijection with the set of all possible \arrgs. We define a \conf to be \emph{maximal} if its \arrg is maximal, i.e.\ contains an arrow between $i$ and $i+1$ whenever $i,i+1\in\calc$. Note that starting from an \arrg we can recover the corresponding \conf by mirroring the above procedure. Moreover, we can evaluate its \forch by taking the formal sum of all permutations $(a_1,\dots,a_n)$ of $\calc$ with the property that $i$ appears before $j$ for every arrow $i\rightarrow j$.
\begin{eg}
Suppose $p=7$ and $\calc=\{\widebar0,\widebar1,\widebar2,\widebar3,\widebar6\}$. We give an example of a \conf and the corresponding \arrg (omitting overlines).
\Yvcentermath1
\[
\gyoung(::;3,:;12,0,6)
\qquad\qquad
\begin{tikzpicture}[scale=1.2]
\draw(0,0)node(0){$0$};
\draw(1,0)node(1){$1$};
\draw(2,0)node(2){$2$};
\draw(3,0)node(3){$3$};
\draw(-1,0)node(6){$6$};
\draw[->](1)--(2);
\draw[->](3)--(2);
\draw[->](0)--(6);
\end{tikzpicture}
\]
The corresponding \forch is then (omitting bars)
\[
\begin{array}{r}
(0,1,3,2,6)+(0,1,3,6,2)+(0,1,6,3,2)+(0,3,1,2,6)+(0,3,1,6,2)\\+(0,3,6,1,2)+(0,6,1,3,2)+(0,6,3,1,2)+(1,0,3,2,6)+(1,0,3,6,2)\\+(1,0,6,3,2)+(1,3,0,6,2)+(1,3,0,2,6)+(1,3,2,0,6)+(3,0,1,2,6)\\+(3,0,1,6,2)+(3,0,6,1,2)+(3,1,0,6,2)+(3,1,0,2,6)+(3,1,2,0,6)\mathrlap.
\end{array}
\]
\end{eg}

Now we introduce a fundamental tool in our treatment. Let $X,Y$ be two \confs and let $\Dist{X}{Y}$ the subset of $\calc$ containing those integers $i$ such that $i+1\in\calc$ and $\bxi$ lies on the left of $\bxipo$ in $X$ and below $\bxipo$ in $Y$ (see the picture below), or vice versa.
\[
\begin{array}{c@\qquad c}
\youngs(\wb,i\ipo)
&
\youngs(\wb,\ipo,i)
\\[24pt]
X
&
Y
\end{array}
\]

Define the \emph{distance $\dist{X}{Y}$ between $X$ and $Y$} as the cardinality of $\Dist{X}{Y}$. This concept can be rephrased also in terms of \arrgs: take $\arrX$ and $\arrY$ and denote by $\arrX\cup\arrY$ the picture obtained by overlapping these two \arrgs. Then $\dist{X}{Y}$ coincides with the number of edges where two differently oriented arrows overlap in $\arrX\cup\arrY$.

Note that $\operatorname d$ is \emph{not} a metric: $\dist XY$ can be zero even when $X\neq Y$.

\begin{eg}
Suppose $p\gs11$ and $\calc=\{\widebar0,\widebar1,\widebar2,\widebar3,\widebar4,\widebar5,\widebar6,\widebar7,\widebar8,\widebar9\}$ and consider the two following \confs:
\[
\gyoung(:::::::;9,::::;5678,:::;4,0123)\qquad\qquad
\gyoung(:::::;9,:::::;8,:::;567,:;234,01)
\]
\Yvcentermath0
Then $\Dist{X}{Y}=\{\widebar1,\widebar3,\widebar7\}$ and $\dist{X}{Y}=3$. The union of the two arrow graphs $\arrX\cup\arrY$ is given as follows, with $\arrX$ in red and $\arrY$ in blue.

\[
\begin{tikzpicture}[scale=1.1]
\draw(0,0)node(0){$0$};
\draw(1,0)node(1){$1$};
\draw(2,0)node(2){$2$};
\draw(3,0)node(3){$3$};
\draw(4,0)node(4){$4$};
\draw(5,0)node(5){$5$};
\draw(6,0)node(6){$6$};
\draw(7,0)node(7){$7$};
\draw(8,0)node(8){$8$};
\draw(9,0)node(9){$9$};
\ruar{red}01
\rdar{blue}01
\ruar{red}23
\rdar{blue}23
\ldar{blue}54
\ruar{red}56
\rdar{blue}56
\ruar{red}67
\rdar{blue}67
\luar{red}98
\ldar{blue}98
\ruar{red}12
\ldar{blue}21
\luar{red}43
\rdar{blue}34
\ruar{red}78
\ldar{blue}87
\end{tikzpicture}
\]

\end{eg}

We start our analysis of \grbs by observing that the \forchs of two different \gribs have no summands in common.

\begin{lemma}\label{disjch}
Consider $\bfa=(a_1,\dots,a_n)$ where $a_1,\dots,a_n$ are the elements of $\calc$ in some order. Then $\bfa$ appears in the \forch of exactly one \grib.

\begin{proof}
We construct the unique \grib $X$ such that $\bfa$ is a term of the \forch $\ch{X}$. Start drawing $\bxi$, for some $i\in\calc$ such that $i-1\notin\calc$. Then consider the order of $i$ and $i+1$ in $\bfa$: if $i+1$ occurs before [resp.\ after] $i$, then draw $\bxipo$ just above [resp.\ to the right of] the node $\bxi$. Now do the same for $i+1$ and $i+2$, and continue until an element $k\in\calc$ is reached for which $k+1\notin\calc$. This determines one component of $X$. Now repeat with the other elements $i\in\calc$ for which $i-i\notin\calc$ to obtain the other components.

By construction, $\bfa$ is a summand of $\ch{X}$. We now see that $X$ is unique. If $Y$ is different \grib, then the set $\Dist{X}{Y}$ is non-empty, because $\arrX$ and $\arrY$ both have arrows joining $i$ and $i+1$ whenever $i,i+1\in\calc$. Take $i\in\Dist{X}{Y}$, and suppose that $i$ appears before $i+1$ in $\bfa$ (the other case being similar). This means that we have an arrow $i\to i+1$ in $\arrX$, and therefore an arrow $i\leftarrow i+1$ in $\arrY$, so that $\bfa$ does not appear in $\ch Y$.
\end{proof}
\end{lemma}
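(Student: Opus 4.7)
The statement asserts both existence and uniqueness of a \grib $X$ with $\bfa$ appearing in $\ch X$, so I would tackle these separately, leveraging the arrow graph formalism just introduced.

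For existence, given a tuple $\bfa=(a_1,\dots,a_n)$ enumerating $\calc$, I would construct a candidate maximal \conf directly via its \arrg. For each consecutive pair $i,i+1\in\calc$, I would insert the arrow $i\to i+1$ if $i$ occurs before $i+1$ in $\bfa$, and the arrow $i+1\to i$ otherwise. Since an arrow is placed for every such pair, the resulting graph is maximal, and under the bijection between \arrgs and \confs it corresponds to a \grib $X$. By construction, $\bfa$ respects every arrow of $\arrX$, hence $\bfa$ is a summand of $\ch{X}$.

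For uniqueness, I would use the distance notion just defined. Suppose $Y$ is another \grib with $\bfa$ appearing in $\ch Y$. Since both $\arrX$ and $\arrY$ carry an arrow between each consecutive pair $i,i+1\in\calc$, any discrepancy between $X$ and $Y$ forces some such edge to be oriented oppositely in the two graphs, i.e.\ $\Dist{X}{Y}$ is non-empty. Picking $i\in\Dist{X}{Y}$, the arrows between $i$ and $i+1$ in $\arrX$ and $\arrY$ point in opposite directions, so $\bfa$ cannot simultaneously satisfy both orderings. This contradicts $\bfa$ being a summand of $\ch Y$.

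The argument has no real obstacle; the main conceptual point is that a \grib's \arrg encodes \emph{exactly} the pairwise ordering constraints among consecutive residues in $\calc$, and these are precisely the constraints that a tuple $\bfa$ must obey in order to contribute to a formal character. Once this observation is made, both existence and uniqueness reduce to reading off the single \arrg that $\bfa$ determines on consecutive pairs.
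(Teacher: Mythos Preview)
Your proposal is correct and takes essentially the same approach as the paper: construct the unique \grib by reading off the arrow orientation at each consecutive pair from the ordering in $\bfa$, then argue uniqueness via the non-emptiness of $\Dist{X}{Y}$ for distinct \gribs. The only cosmetic difference is that the paper phrases the construction in terms of placing nodes of the \conf directly, whereas you work in the \arrg language and invoke the bijection; these are equivalent.
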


\begin{cory}\label{noincommon}
Different \gribs have no composition factors in common.
\end{cory}

The next three results will finally give us a labelling set for the columns of the decomposition matrix of a \ribbl. In \cref{refs}, if $X$ is a \conf, a standard $X$-tableau indicates a standard tableau of \emph{any} \skewp with \conf $X$. We begin with a definition.

\begin{defn}
Let $X$ be a \conf. A \emph{\reft} of $X$ is a \grib $Y$ that can be obtained joining the \conncomps of $X$ together.
\end{defn}

\begin{eg}
Suppose $p=7$, and let $X$ be the \conf from an earlier example:
\[
\gyoung(::;3,:;12,0,6).
\]
Then $X$ has exactly two refinements, as follows.
\[
\gyoung(:;3,12,0,6)\qquad\gyoung(::;3,012,6).
\]
\end{eg}

\begin{lemma}{\label{refs}}
Let $X$ be a \conf. The \forch of $X$ is the sum of the \forchs of the \refts of $X$.
\begin{proof}
This follows from a correspondence between standard tableaux: given a \reft $Y$ of $X$, each standard $Y$-tableau gives rise to a standard $X$-tableau (just by breaking into individual \conncomps). Conversely, each standard $X$-tableau arises in this way for a unique \reft $Y$: for each $i$ such that the nodes $\bxi$ and $\bxipo$ lie in different \conncomps of $X$, the relative order of the entries $i,i+1$ tells us how they need to be joined together if the resulting tableau is to be standard.
\end{proof}
\end{lemma}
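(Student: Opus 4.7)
The plan is to reduce the identity to a bijection between standard $X$-tableaux and pairs $(Y, T)$ where $Y$ is a \reft of $X$ and $T$ is a standard $Y$-tableau. Since a \reft is obtained purely by joining \conncomps of $X$ and does not alter the underlying set of nodes or their $p$-residues, a standard tableau interpreted on $X$ or on $Y$ produces exactly the same tuple $a_T$. Any such bijection therefore automatically yields $\ch{X} = \sum_Y \ch{Y}$ after summing.

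For the forward direction, let $T$ be a standard $X$-tableau. The \arrg $\arrX$ contains an arrow between $i$ and $i+1$ exactly when $\bxi$ and $\bxipo$ lie in the same \conncomp of $X$, and lacks one when they lie in different components. I use $T$ to fill in the missing arrows: for each pair $i,i+1\in\calc$ with $\bxi$ and $\bxipo$ in distinct components of $X$, declare $\bxipo$ to lie to the right of $\bxi$ if $T(\bxi)<T(\bxipo)$, and below $\bxi$ otherwise. The resulting \arrg is maximal, so by the bijection between \arrgs and \confs it corresponds to a unique \conf $Y$, which by construction is a \reft of $X$. Viewed as a $Y$-tableau, $T$ is still standard: the inequalities coming from adjacencies already present in $X$ are inherited, and the adjacencies newly created when forming $Y$ were chosen precisely to respect $T$.

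For the inverse direction, given a \reft $Y$ and a standard $Y$-tableau $T$, simply interpret $T$ as an $X$-tableau; standardness is automatic because $X$ has strictly fewer adjacency constraints than $Y$. The two maps are mutual inverses by construction: the forward map records the relative order of $T(\bxi)$ and $T(\bxipo)$ at each joined pair, which is exactly the information encoded in the relative position of those cells within $Y$. The only real subtlety is checking that the forward recipe actually produces a bona fide \conf, and this is where the main obstacle sits; it is resolved by the observation, noted earlier in the section, that maximal \arrgs correspond bijectively to maximal \confs. Once that is accepted, the remainder of the argument is bookkeeping of standardness conditions.
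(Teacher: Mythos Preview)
Your proof is correct and follows essentially the same approach as the paper: both set up a bijection between standard $X$-tableaux and pairs $(Y,T)$ with $Y$ a \reft and $T$ a standard $Y$-tableau, using the relative order of the entries in $\bxi$ and $\bxipo$ to decide how to join components where $X$ has no arrow. One small slip: when $T(\bxi)>T(\bxipo)$ you should place $\bxipo$ \emph{above} $\bxi$, not below, since the node of residue $i+1$ adjacent to $\bxi$ lies either to its right or immediately above it.
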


\begin{cory}{\label{compfact}}
If $X$ is a \conf, then the composition factors of $X$ are the composition factors of the \refts of $X$.
\begin{proof}
This follows from the fact that the \forchs of simple modules of the degenerate affine Hecke algebra are linearly independent \cite[Theorem 5.3.1]{klbook}.
\end{proof}
\end{cory}

\begin{thm}{\label{ribequiv}}
Let $X$ and $Y$ be two \confs. The following are equivalent:
\begin{enumerate}
\item $X$ and $Y$ have a composition factor in common;
\item The \forchs of $X$ and $Y$ have a term in common;
\item $\dist{X}{Y}=0$;
\item There is a \grib which is a \reft of both $X$ and $Y$.
\end{enumerate}
\begin{proof}
\begin{description}[beginthm]
\item[\rm (1$\Rightarrow2$)] This follows from the fact that the \forch of a $\calh_n^k$-module is the sum of the \forchs of its composition factors.
\item[\rm ($2\Rightarrow3$)] If there exists an integer $i\in D(X,Y)$, then $i+1$ occurs before $i$ in every summand of $\ch{X}$ and after $i$ in every summand of $\ch{Y}$, or vice versa. So $\ch{X}$ and $\ch{Y}$ have no terms in common.
\item[\rm ($3\Rightarrow4$)] Consider the graph $\arrX\cup\arrY$. Since $\dist{X}{Y}=0$, this is an admissible \arrg. Then every \arrg obtained by filling the vacant edges in $\arrX\cup\arrY$ corresponds to a \reft of both $X$ and~$Y$.
\item[\rm ($4\Rightarrow1$)] This follows from \cref{compfact}.\qedhere
\end{description}
\end{proof}
\end{thm}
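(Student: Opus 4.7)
The natural plan is to prove the implications as a cycle $(1)\Rightarrow(2)\Rightarrow(3)\Rightarrow(4)\Rightarrow(1)$, leveraging the two different ways that \confs decompose: via refinements (which gives us access to composition factors through \cref{compfact}) and via \arrgs (which control formal characters term by term).

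For $(1)\Rightarrow(2)$, the \forch of any $\calh_n^k$-module is the sum of the \forchs of its composition factors, so a shared composition factor $S$ contributes $\ch S$ to both $\ch X$ and $\ch Y$; since $\ch S$ is non-zero, there is a common term. For $(2)\Rightarrow(3)$, I would argue the contrapositive: if some $i\in\Dist XY$ exists, then without loss of generality $\bxi$ lies to the left of $\bxipo$ in $X$ and below $\bxipo$ in $Y$. Every standard $X$-tableau then has $i$ entered before $i+1$, so every summand of $\ch X$ places $i$ before $i+1$; symmetrically every summand of $\ch Y$ places $i$ after $i+1$. Hence $\ch X$ and $\ch Y$ share no term.

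For $(3)\Rightarrow(4)$, I would exploit the bijection between \confs and \arrgs. The hypothesis $\dist XY=0$ says that $\arrX$ and $\arrY$ never disagree on an edge, so the overlay $\arrX\cup\arrY$ contains at most one arrow per potential edge and is itself an \arrg. Any maximal \arrg extending $\arrX\cup\arrY$ (obtained by assigning an arbitrary direction to each still-vacant edge $\{i,i+1\}$ with $i,i+1\in\calc$) corresponds to a \grib $Z$, and by construction every arrow of $\arrX$ and of $\arrY$ is an arrow of $\arr Z$; this exactly says that $Z$ is obtained by joining \conncomps of $X$ (and of $Y$), so $Z$ is a common \reft. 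Finally, $(4)\Rightarrow(1)$ is immediate from \cref{compfact}: a common \reft $Z$ has composition factors that are simultaneously composition factors of $X$ and of $Y$, and $Z$ has at least one composition factor.

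The only step with any real content is $(3)\Rightarrow(4)$, and even that is mild; the potential pitfall is just to verify carefully that $\arrX\cup\arrY$ really is a well-defined \arrg when $\dist XY=0$ (one arrow per edge, no double arrows), and that filling in vacant edges never introduces inconsistencies. Everything else is a direct consequence of the additivity of \forchs over composition factors (using the linear independence of simple-module \forchs from \cite[Theorem 5.3.1]{klbook}), the term-level description of $\ch X$ from standard tableaux, and the dictionary between \confs and \arrgs.
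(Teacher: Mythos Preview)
Your proposal is correct and follows essentially the same cycle of implications as the paper's proof, with the same justifications at each step: additivity of formal characters for $(1)\Rightarrow(2)$, the contrapositive via the relative order of $i$ and $i+1$ for $(2)\Rightarrow(3)$, overlaying the arrow graphs and filling vacant edges for $(3)\Rightarrow(4)$, and \cref{compfact} for $(4)\Rightarrow(1)$. Your write-up is slightly more explicit in places (e.g.\ spelling out why $\arrX\cup\arrY$ is a well-defined arrow graph), but there is no substantive difference in approach.
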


\cref{ribequiv} tells us that the set of \gribs serves as a labelling set for an approximation to the decomposition matrix of $B$: by \cref{compfact} any simple module $S$ occurs as a composition factor of a unique \grib $Y$ (say with multiplicity $c$). So by \cref{compfact}, if $X$ is any \conf then $S$ occurs as a composition factor of $X$ with multiplicity $c$ if $Y$ is a refinement of $X$, and $0$ otherwise.

So we can define a matrix $D_B$ with rows indexed by the set of \confs, and columns indexed by the set of \gribs, with entry $d_{XY}=1$ if $Y$ is a refinement of $X$ and $0$ otherwise. Then the actual decomposition matrix of $B$ is obtained from $D_B$ by possibly duplicating rows (because a given \conf may correspond to more than one Specht module) and possibly duplicating and rescaling columns (because a Specht module labelled by a \grib might be reducible). As a consequence, to show that $B$ is a single $p$-block of $\cenalgk$, we just need to show that $D_B$ is a connected matrix, and our problem becomes purely combinatorial.

(In fact we suspect that every Specht module labelled by a \grib is irreducible, but we could not find a proof of this statement, and we do not need it.)

\Yvcentermath1
\begin{eg}
Suppose $p=5$, $l=5$ and $m=8$, and let $B$ be the block consisting of Specht modules $\spe{\mu\sm\la}$ where $\corep5\mu=(2,1)$ and $\corep5\la=\vn$. Then $\calc=\{\widebar0,\widebar1,\widebar4\}$, and the \confs in $B$ are as follows:
\[
X_1=\young(401),\quad X_2=\gyoung(:;01,4),\quad X_3=\young(01,4),\quad X_4=\gyoung(:;1,0,4),\quad X_5=\young(1,0,4).
\]
(For example, the \conf $X_2$ arises from the \skewps $(7,1)\sm(5)$ and $(5,3)\sm(4,1)$.)

The matrix $D_B$ is then given as follows.
\[
\begin{array}{c|ccc}
&X_1&X_3&X_5
\\\hline
X_1&1&0&0
\\
X_2&1&1&0
\\
X_3&0&1&0
\\
X_4&0&1&1
\\
X_5&0&0&1
\end{array}
\]
\end{eg}

\begin{defn}
Let $X,Y$ be two \confs in $\calb$. We say that $X$ and $Y$ are \emph{$p$-linked} if there exists a series of \confs $X=X_1,X_2,\dots,X_{t-1},X_t=Y$ in $\calb$ such that $\dist{X_j}{X_{j+1}}=0$ for every $1\le j\le t-1$.
\end{defn}

Our aim in this section is to show that any two \confs in $B$ are $p$-linked. To do this, we will mainly work with \abacs. As remarked in \cref{secskew}, a \skewp $\mulam$ in $B$ (that contains no $2\times2$-subdiagrams) can be thought as the union of a family of rim-hooks of $\mu$ whose removal gives $\la$ (recall that each of these rim-hooks corresponds to a \conncomp of the \conf of $\mulam$). If $H$ is one of them and $h,f\in\mathbb{Z}$ are the positions of $\Ab{\mu}$ corresponding to the hand node and foot node of $H$, respectively, then $h$ is a bead and $f-1$ is a space (and vice versa in $\Ab{\lambda})$. We represent the bead at position $f$ in $\Ab{\mu}$ by a new symbol $\abacus(*)$ called a \emph{star bead}. (So the star beads are precisely the beads occurring in the \abac of $\mu$ but not of $\la$.)

\begin{eg}
Take $p=5$ and $\mu\sm\la=(7,3,1^2)\sm(7,2)$, giving the following \conf. \[\gyoung(:;1,3,2)\] This \conf has two connected components, so that $\mu\sm\la$ comprises two rim-hooks. We draw \abacs of $\mu$ and $\la$ as follows, with star beads in $\mu$. 
\[\begin{array}{c}
	\abacus(lmmmr,bbbbb,bnb*n,n*nnn,nbnnn)
	\\[28pt]
	\abacus(lmmmr,bbbbb,bbbnn,bnnnn,nbnnn)
\end{array}\]
\end{eg}

Now we can give our main result.

\begin{thm}\label{grbthm}
Let $X,Y\in\calb$ such that $\dist XY>0$. Then there is $Z\in\calb$ such that $\dist{X}{Z}<\dist XY$ and $\dist{Y}{Z}<\dist XY$.
\end{thm}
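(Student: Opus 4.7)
The plan is to construct $Z$ by ``splitting'' a connected component of $X$ across a disagreement edge. Pick some $i\in\Dist XY$; after swapping $X,Y$ if necessary, assume $\bxi$ lies to the left of $\bxipo$ in $X$ and below $\bxipo$ in $Y$. Let $H$ denote the rim-hook of $\mu_X$ forming the connected component of $X$ that contains both $\bxi,\bxipo$. Then $H$ decomposes naturally as $H_1\cup H_2$, where $H_1$ consists of the nodes of $H$ with residue $\leq i$ and $H_2$ of those with residue $\geq i+1$. I would take $Z$ to be the \conf obtained by replacing $H$ with two separate rim-hooks $H_1,H_2$ and leaving every other component of $X$ unchanged. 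Since $\arr Z$ then agrees with $\arrX$ on every edge except $(i,i+1)$, where $\arr Z$ has no arrow, we automatically get $\dist XZ=0$ and $\dist YZ\leq\dist XY-1$, both strictly less than $\dist XY$, as required.

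The real content is to show $Z\in\calb$, i.e.\ to realise this new \conf as a genuine skew-partition $\mu_Z\sm\la_Z$ with $\corep p{\mu_Z}=\corep p{\mu_X}$ and $\corep p{\la_Z}=\corep p{\la_X}$. I would work on the abacus of $\la_X$: the single bead slide of length $|H|$ that produces $H$ must be replaced by two shorter bead slides, of lengths $|H_1|$ and $|H_2|$, using two distinct beads landing on the runners prescribed by the residues of the hands of $H_1$ and $H_2$. This is possible exactly when $\la_X$ admits addable hooks of the appropriate lengths at the correct runners (and symmetrically $\mu_Z$ has the correct removable hooks).

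To secure these, I would invoke \cref{prop6em}. The assumption that $Y$ has $\bxi$ below $\bxipo$ identifies a removable sub-hook of the required length and runner in $\mu_Y$, together with a corresponding removable hook in $\la_Y$; applying \cref{prop6em} to the pairs $\la_X,\la_Y$ and $\mu_X,\mu_Y$ (which lie in the same $p$-blocks of $S_l$ and $S_m$ respectively) then forces either the required addable/removable hooks to exist in the $X$-side partitions — in which case the split above can be carried out — or in the $Y$-side, in which case the mirror construction starting from $Y$ succeeds instead. The main obstacle will be the careful case analysis needed to ensure that one of these two symmetric routes always produces $Z\in\calb$, tracking runners and bead moves precisely enough to confirm that all $p$-cores are preserved.
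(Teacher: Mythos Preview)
Your high-level strategy---pick $i\in\Dist XY$, modify one of the two skew-partitions so that the arrow at $(i,i+1)$ disappears, and use \cref{prop6em} to guarantee the necessary hooks---is exactly the spirit of the paper's proof. But the specific $Z$ you propose (simply delete the $(i,i+1)$-arrow from $\arrX$ and leave everything else intact) is not what the paper constructs, and your sketch does not show that this particular $Z$ lies in $\calb$.

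The gap is in the step ``replace the single bead slide for $H$ by two shorter slides for $H_1,H_2$''. Even granting that $\la_X$ has an addable $|H_1|$-hook at runner $i$ and an addable $|H_2|$-hook at runner $h$ (which already needs more than one application of \cref{prop6em}), these addable hooks will typically sit at \emph{different} abacus positions from the original $H$. The internal shape of a rim-hook is determined by the bead/space pattern between its endpoints, so the new hooks need not have the same arrows as the corresponding pieces of $H$. You therefore lose control of every edge inside $H_1$ and $H_2$, and your claimed equality $\dist XZ=0$ no longer follows. The same issue recurs for the other components of $X$: moving beads to create $H_1,H_2$ may interfere with the slides producing those components.

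The paper deals with this by an induction on $k\in\{i+1,\dots,i+r\}$ (its Claim~1). At each $k$ it either builds a suitable $Z$---by removing the nodes of residues $i{+}1,\dots,k$ from $\mu_X$ and reinserting them via a carefully chosen sequence of bead moves $c_v\to d_v$---or records precise information (``$\mu_X$ has no addable $(k-i)$-hook at runner $i$'' and ``$\la_Y$ has a removable $(k-i)$-hook at runner $k$'') to feed into the next step. The inductive hypothesis is exactly what pins down the arrows at residues $i{+}1,\dots,k{-}1$ in the constructed $Z$, so that one can verify $j\notin\Dist XZ$ and $j\notin\Dist YZ$ for each such $j$. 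There is also a genuine bifurcation: in one sub-case the $\mu_X$-modification fails to reduce $\dist YZ$, and the argument must switch to modifying $\la_Y$ instead, using the dual information accumulated so far. Your single dichotomy ``either the $X$-side works or the $Y$-side works'' does not capture this iterated back-and-forth.

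In short: the idea is right, but a one-shot split plus one call to \cref{prop6em} is not enough; you need the inductive bookkeeping to control the internal arrows of the relocated hooks.
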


\begin{pf}

For every $j\in\calc$, consider the node $\bxj$ in $\X$ and $\Y$ and denote by $x_j$ and $y_j$ the positions corresponding to these nodes in the abacus display, respectively. (Note that by the choice of the charge, positions $x_j$ and $y_j$ are both in the $j$-th runner.) Fix $i\in\Dist{X}{Y}$, and assume \wolog that the node $\bxi$ lies on the left of $\bxipo$ in $X$ and below it in $Y$. Then $x_i$ is a space in $\Ab{\mu_X}$ and $y_i$ is a bead (and \emph{not} a star bead) in $\Ab{\mu_Y}$. We let $r$ be maximal such that $i,i+1,\dots,i+r\in\calc$ (where we add modulo $p$); note that there must be such an $r$ because by assumption $\calc\neq\zpz$. We prove that we can find the required \conf $Z$ by contradiction, via the following claim.

\clamno
Suppose that the required \conf $Z$ does not exist. Then for every $k\in\{i+1,\dots,i+r\}$ with $k\notin\Dist XY$, the following statements hold.

\begin{itemize}
\item If $x_{k}$ is a bead or a star bead in $\Ab{\mu_X}$, then
\begin{itemize}
\item $\mu_X$ has no addable $(k-i)$-hooks at runner $i$, and
\item $\la_Y$ has a removable $(k-i)$-hook at runner $k$.
\end{itemize}
\item If $y_{k}$ is a space or a star bead in $\Ab{\mu_Y}$, then
\begin{itemize}
\item $\mu_X$ has an addable $(k-i)$-hook at runner $i$, and
\item $\la_Y$ has no removable $(k-i)$-hooks at runner $k$.
\end{itemize}
\end{itemize} 
\malc

Note that the condition that $x_k$ is a bead or star bead in $\Ab{\mu_X}$ is the same as saying that $\bxkpo$ does not lie immediately to the right of $\bxk$ in $X$. Similarly, the condition that $y_k$ is a space or star bead in $\Ab{\mu_Y}$ means that $\bxkpo$ does not lie immediately above $\bxk$ in $Y$.

Claim 1 then gives the desired contradiction to complete the proof: if we let $k=i+r$, then $x_k$ and $y_k$ are star beads in $\Ab{\mu_X}$ and $\Ab{\mu_Y}$ respectively because $i+r+1\notin\calc$. So Claim 1 cannot be true in the case $k=i+r$.

We prove Claim 1 by induction on $k$. So take $k\in\{i+1,\dots,i+r\}\sm\Dist{X}{Y}$, and assume that Claim 1 holds for all smaller values of $k$. The following statement is crucial.

\clamno
Suppose $k$ is as in Claim 1, and that the inductive hypothesis holds. Then position $x_k+i-k$ is a space in $\Ab{\mu_X}$.
\prof
Consider the node $\bxk$ in $X$. We have that $x_k=x_i-i+k+tp$, for some $t\in\bbz$. If $t=0$, then the claim is trivially true since we are supposing that $x_i$ is a space. If $t\ne0$, then the nodes $\bxi$ and $\bxk$ lie in different \conncomps of $X$. Let $j\in\{i+2,\dots,k\}$ be such that $\bxj$ is the foot node of the \conncomp of $X$ containing $\bxk$. It follows that $\bxjmo$ is the hand node of a \conncomp of $X$, so that $x_{j-1}$ is a star bead in $\Ab{\mu_X}$ and $j-1\notin\Dist XY$. By the inductive hypothesis, $\mu_X$ has no addable $(j-1-i)$-hooks at runner $i$, so position $x_j+i-j$ must be a space since $x_j-1$ is a space in $\Ab{\mu_X}$. Finally, since $\bxj$ and $\bxk$ lie in the same \conncomp of $X$, we see that $x_j=x_k+j-k$. It follows that $x_j+i-j=x_k+i-k$, proving what we have claimed.
\malc

With a perfectly dual argument it can be shown that the inductive hypothesis forces the position $y_k+i-k$ to be a bead in $\Ab{\la_Y}$.

With Claim 2 established, we deal with the inductive step of Claim 1. We consider only the case when $x_k$ is a bead or a star bead in $\Ab{\mu_X}$, the other case being similar. By Claim 2, $x_k+i-k$ is a space in $\Ab{\mu_X}$, and so $\mu_X$ has a removable $(k-i)$-hook at position $x_k$. Assume that there exists $a_1\equiv i\ppmod p$ such that $\mu_X$ has an addable $(k-i)$-hook at $a_1$. 
Let $1\le k_1\le k-i$ be the least integer such that $k-k_1\notin\Dist{X}{Y}$, position $x_{k-k_1}$ is a space and $a_1+k-i-k_1$ is a bead in $\Ab{\mu_X}$. The assumptions that $k-k_1\notin\Dist XY$ and $x_{k-k_1}$ is a space mean that $y_{k-k_1}$ is a space or a star bead in $\Ab{\mu_Y}$. So if $k_1<k-i$, then the inductive hypothesis says that there exists an integer $a_2\equiv i$ such that $\mu_X$ has an addable $(k-i-k_1)$-hook at position $a_2$. Then we repeat the above process, letting $1\ls k_2\ls k-i-k_1$ be minimal such that $k-k_1-k_2\notin\Dist XY$, $x_{k-k_1-k_2}$ is a space and $a_2+k-i-k_1-k_2$ is a bead in $\Ab{\mu_X}$. It is clear that this procedure always terminates at a step $s$ such that $i=k-\sum_{1\le u\le s}k_u$. Summarising, we have found integers $a_1,\dots,a_s\equiv i\ppmod p$ and $k_1,\dots,k_s$ such that, for every $1\le v\le s$, in $\Ab{\mu_X}$, there is a bead at position $c_v\vcentcolon=a_v+k-i-\sum_{1\le u\le v}k_u$ and a space at position $d_v\vcentcolon=a_v+k-i-\sum_{1\le u\le v-1}k_u$. Let $i+1\le j_1<\dots<j_t\le k$ be those residues such that $\young(\jo),\dots,\young(\jt)$ are hand nodes of \conncomps of $X$.

We now construct a new \abac. We start from $\Ab{\mu_X}$ and make the following moves:
\begin{itemize}
\item move the bead (or star bead) at $x_k$ to the space at $x_{j_t+1}-1$,
\item for every $2\le v\le t$, move the star bead at $x_{j_v}$ to the space at $x_{j_{v-1}+1}-1$,
\item move the star bead at $x_{j_1}$ to the space at $x_i$,
\item for every $1\le v\le s$, move the bead at $c_v$ to the space at $d_v$.
\end{itemize}
Note that in terms of the Young diagram, the first three types of moves simply entail removing all the nodes $\bxipo,\dots,\young(k)$ from $\mu_X$, and the moves of the last type entail putting these nodes back in new positions. So if we let $\mu_Z$ be the partition defined by this new \abac, then $\la_X\subset\mu_Z$, and we have a \skewp $\Xmod$ in $B$. We let $Z$ be the \conf of this \skewp. We will show that in most cases, $Z$ is the \conf desired in the \lcnamecref{grbthm}.

The diagram below illustrates the changes made in the abacus. Note that the rows of the diagram in which the given positions appear are illustrative only; these rows could easily appear in a different order in practice.
\[
\begin{tikzpicture}[xscale=1.5,label distance=-4.5pt]
\foreach\g in 3{
\foreach\x in {0,1,2,3}\draw(\x,-.5)--++(0,8)++(\g+3,0)--++(0,-8);
\draw(0,0)node[label=below left:$a_{s-1}$](as-1){\ };\draw(as-1)node{$\abacus(b)$};
\draw(0,1)node[label=below left:$a_2$](a2){\ };\draw(a2)node{$\abacus(b)$};
\draw(0,2)node[label=below left:$a_3$](a3){\ };\draw(a3)node{$\abacus(b)$};
\draw(0,4)node[label=below left:$a_1$](a1){\ };\draw(a1)node{$\abacus(b)$};
\draw(0,6)node[label=below left:${a_s=c_s}$](as){\ };\draw(as)node{$\abacus(b)$};
\draw(2,0)node[label=below left:$c_{s-1}$](cs-1){\ };\draw(cs-1)node{$\abacus(b)$};
\draw(3+\g,1)node[label=below left:$c_2$](c2){\ };\draw(c2)node{$\abacus(b)$};
\draw(5+\g,4)node[label=below left:$c_1$](c1){\ };\draw(c1)node{$\abacus(b)$};
\draw(6+\g,7)node[label=below right:$x_k$](xk){\ };\draw(xk)node{$\abacus(b)$};
\draw(0,5)node[label=below left:$x_i$](xi){\ };\draw(xi)node{$\abacus(n)$};
\draw(0,3)node{$\abacus(n)$};
\draw(0,7)node{$\abacus(n)$};
\draw(1,3)node(xjp){\ };\draw(xjp)node{$\abacus(n)$};
\draw(2,6)node[label=below right:$d_s$](ds){\ };\draw(ds)node{$\abacus(n)$};
\draw(3,7)node(xjq){\ };\draw(xjq)node{$\abacus(n)$};
\draw(3+\g,2)node[label=below right:$d_3$](d3){\ };\draw(d3)node{$\abacus(n)$};
\draw(4+\g,7)node(xjr){\ };\draw(xjr)node{$\abacus(n)$};
\draw(5+\g,1)node[label=below right:$d_2$](d2){\ };\draw(d2)node{$\abacus(n)$};
\draw(6+\g,4)node[label=below right:$d_1$](d1){\ };\draw(d1)node{$\abacus(n)$};
\draw(1,5)node[label=below right:$x_{j_1}$](xj1){\ };\draw(xj1)node{$\abacus(*)$};
\draw(3,3)node[label=below right:$x_{j_2}$](xj2){\ };\draw(xj2)node{$\abacus(*)$};
\draw(4+\g,5)node[label=below right:$x_{j_t}$](xjt){\ };\draw(xjt)node{$\abacus(*)$};
\draw(.5,5)node{\dots};
\draw(.5,6)node{\dots};
\draw(1.5,6)node{\dots};
\draw(1.5,3)node{\dots};
\draw(2.5,3)node{\dots};
\draw(2.5,0)node{\dots};
\draw(3.5,0)node{\dots};
\draw(3.5,7)node{\dots};
\draw(2.5+\g,2)node{\dots};
\draw(2.5+\g,5)node{\dots};
\draw(3.5+\g,5)node{\dots};
\draw(3.5+\g,1)node{\dots};
\draw(4.5+\g,1)node{\dots};
\draw(4.5+\g,7)node{\dots};
\draw(5.5+\g,7)node{\dots};
\draw(5.5+\g,4)node{\dots};
\draw(xjt)++(-2,.6)coordinate(xx);
\draw(cs-1)++(2,.6)coordinate(cc);
\draw[->,thick,red](xj1)to[out=150,in=30](xi);
\draw[->,thick,red](xj2)to[out=150,in=30](xjp);
\draw[->,thick,red](xk)to[out=150,in=30](xjr);
\draw[->,thick,red](xjq)++(1,.3)to[out=180,in=30](xjq);
\draw[->,thick,red](xjt)to[out=150,in=0](xx);
\draw[->,thick,blue](as)to[out=30,in=150](ds);
\draw[->,thick,blue](c2)to[out=30,in=150](d2);
\draw[->,thick,blue](c1)to[out=30,in=150](d1);
\draw[->,thick,blue](d3)+(-1,.3)to[out=0,in=150](d3);
\draw[->,thick,blue](cs-1)to[out=30,in=180](cc);
}
\end{tikzpicture}
\]

For every $j\in\calc$, let $z_j$ be the position of the node $\bxj$ in $\Ab{\mu_Z}$. Now take $j\in\{i+1,\dots,k-1\}$ with $j\notin\Dist XY$. Then we claim that $j\notin\Dist XZ$ and $j\notin\Dist YZ$. If $x_j$ is a bead or a star bead then by the inductive hypothesis $\mu_X$ has no addable $(j-i)$-hooks at runner $i$, so that $z_j$ is a bead in $\Ab{\mu_Z}$. If $x_j$ is a space, then the choice of $k_1,\dots,k_s$ means that $z_j$ is a space or a star bead. This gives $j\notin\Dist XZ$. We deduce similarly that $j\notin\Dist YZ$ (note that the inductive hypothesis means that it cannot be the case that $x_j$ is a star bead and $y_j$ is a space).

So our claim is true. Additionally, it is easy to see that $i\notin\Dist XZ$ and $i\notin\Dist YZ$. Furthermore, $k\notin\Dist XZ$ because $x_k$ and $z_k$ are both beads. It follows that $\dist XZ<\dist XY$, and that $\dist YZ<\dist XY$ unless $y_k$ is a space in $\Ab{\la_Y}$ and, in $\Ab{\mu_X}$, $x_k$ is a star bead and $a_1=x_{k+1}+k-i-1$ is the only addable $(k-i)$-hook at runner $i$. 
If this is not the case, then $Z$ is the desired \conf and we conclude. Hence assume the opposite from now on. 

We are under the assumption that $y_k$ is a space in $\Ab{\la_Y}$. Then $\la_Y$ has an addable $(k-i)$-hook at runner $i$ because position $y_k+i-k$ is a bead by the dual version of Claim 2. The fact that $\mu_X$ has a removable $(k-i)$-hook at runner $k$ and exactly an addable $(k-i)$-hook at runner $i$, together with \cref{prop6em}, implies that $\mu_Y$ has a removable $(k-i)$-hook at runner $k$. Say that this removable hook occurs at position $r_1\equiv k\ppmod p$. Note that $r_1\ne y_k$ because $y_k$ is a space, so $r_1$ is a removable $(k-i)$-hook at runner $k$ of $\la_Y$ as well.
Let $1\le k_1\le k-i$ with $k-k_1\notin\Dist{X}{Y}$ be the least integer such that $y_{k-k_1}$ is a bead and $r_1-k_1$ is a space in $\Ab{\la_Y}$. If $k_1<k-i$, the inductive hypothesis ensures that $\la_Y$ has a removable $(k-i-k_1)$-hook at some position $r_2\equiv r_1-k_1\ppmod p$. Then we define $1\ls k_2\ls k-i-k_1$ with $k-k_1-k_2\notin\Dist XY$ to be minimal such that $y_{k-k_1-k_2}$ is a bead and $r_2-k_2$ is a space in $\Ab{\la_Y}$. We continue in this way; the process terminates at a step $s$ such that $i=k-\sum_{1\le u\le s}k_u$. Summarising we have collected integers $k_1,\dots,k_s$ and $r_1,\dots,r_s$ with $r_v\equiv k-\sum_{1\le u\le v-1}k_u\ppmod p$,  for $1\le v\le s$, such that, in $\Ab{\la_Y}$, there is a bead at position $r_v$ and a space at position $r_v-k_v$ for every $1\le v\le s$. Let $i+1\le j_1<\dots<j_t\le k$ be the residues of the hand nodes of the \conncomps of $Y$. 

We modify $\Ab{\la_Y}$ as follows:
\begin{itemize}
\item move the bead at $y_i$ to the space at $y_{j_1}$,
\item for every $1\le v\le t-1$, move the star bead at $y_{j_v+1}-1$ to the space at $y_{j_{v+1}}-1$,
\item move the star bead at $y_{j_t+1}-1$ to the space at $x_k$,
\item for every $1\le v\le s$, move the bead at $r_v$ to the space at $r_v-k_v$.
\end{itemize}

Let $\la_W$ be the resulting partition. Then $\Ymod\in B$. We let $W$ be the \conf of $\Ymod$. Then we can deduce that $\dist XW<\dist XY$ and $\dist YW<\dist XY$; this is done as for $Z$ above, but now note that $k$ cannot belong to $\Dist{X}{W}$, e.g.\ because we are supposing that $x_k$ is a star bead in $\Ab{\mu_X}$.) The \conf $W$ is finally the desired one, contradicting the hypothesis of Claim 1.

We therefore assume that $\mu_X$ has no addable $(k-i)$-hooks at runner $i$ and, hence, by \cref{prop6em}, that $\mu_Y$ has a removable $(k-i)$-hook at runner $k$. Finally we note that this removable hook does not occur at position $y_k$ since by the previous remark position $y_k+i-k$ is a bead in $\Ab{\mu_Y}$. So we conclude that $\la_Y$ has a removable $(k-i)$-hook at runner $i$ ending both the inductive step and the proof.
\end{pf}

\begin{eg}
Let $m=42$, $l=37$ and $p=7$. The \skewps $\X=(14^2,7,5,2)\setminus(14^2,4,3,2)$ and $\Y=(9,8,7,5^2,4^2)\setminus(9,8,4^5)$ have size $n=m-l=5$ and belong to the same \conjbl of $\calc_{37,42}^{k}$ having $7$-content $\calc=\{\widebar0,\widebar1,\widebar2,\widebar3,\widebar4\}$. Indeed, $\mathrm{core}_7(\mu_X)=\mathrm{core}_7(\mu_Y)=(5,3,2^3)$ and $\mathrm{core}_7(\la_X)=\mathrm{core}_7(\la_Y)=(7,3,2^3)$. Let $X$ and $Y$ be the \confs of $\X$ and $\Y$, respectively. The Young diagrams and the \abacs (with $K=14$ beads) are shown below.

\newcommand\ball{.3}
\newcommand\spac{.15}
\newcommand\yoy{1}
\[
\begin{array}{c@{\qquad\qquad}c}
\begin{tikzpicture}[scale=.6]
\foreach\x in{0,1,2,3,4,5,6}\draw(\x,-.5*\yoy)--++(0,4.5*\yoy);
\draw(0,4*\yoy)--++(6,0);
\foreach\x in{0,1,2,3,4,5,6}\foreach\y in{0,1,2,3}\draw(\x,\y*\yoy)++(-.5*\spac,0)--++(\spac,0);
\foreach\x in{0,1,2,3,4,5,6}\shade[shading=ball,ball color=black](\x,3*\yoy)circle(\ball);
\foreach\x in{0,1,4}\shade[shading=ball,ball color=black](\x,2*\yoy)circle(\ball);
\foreach\x in{1,4}\shade[shading=ball,ball color=black](\x,\yoy)circle(\ball);
\foreach\x in{5,6}\shade[shading=ball,ball color=black](\x,0)circle(\ball);
\draw[white](4,\yoy)node{$\bigstar$};
\end{tikzpicture}
&\begin{tikzpicture}[scale=.6]
\foreach\x in{0,1,2,3,4,5,6}\draw(\x,-.5*\yoy)--++(0,4.5*\yoy);
\draw(0,4*\yoy)--++(6,0);
\foreach\x in{0,1,2,3,4,5,6}\foreach\y in{0,1,2,3}\draw(\x,\y*\yoy)++(-.5*\spac,0)--++(\spac,0);
\foreach\x in{0,1,2,3,4,5,6}\shade[shading=ball,ball color=black](\x,3*\yoy)circle(\ball);
\foreach\x in{4,5}\shade[shading=ball,ball color=black](\x,2*\yoy)circle(\ball);
\foreach\x in{0,1,4,6}\shade[shading=ball,ball color=black](\x,\yoy)circle(\ball);
\shade[shading=ball,ball color=black](1,0)circle(\ball);
\draw[white](4,\yoy)node{$\bigstar$};
\end{tikzpicture}
\\
\mu_X&\mu_Y
\\[12pt]
\begin{tikzpicture}[scale=.6]
\foreach\x in{0,1,2,3,4,5,6}\draw(\x,-.5*\yoy)--++(0,4.5*\yoy);
\draw(0,4*\yoy)--++(6,0);
\foreach\x in{0,1,2,3,4,5,6}\foreach\y in{0,1,2,3}\draw(\x,\y*\yoy)++(-.5*\spac,0)--++(\spac,0);
\foreach\x in{0,1,2,3,4,5,6}\shade[shading=ball,ball color=black](\x,3*\yoy)circle(\ball);
\foreach\x in{0,1,4,6}\shade[shading=ball,ball color=black](\x,2*\yoy)circle(\ball);
\foreach\x in{5,6}\shade[shading=ball,ball color=black](\x,0)circle(\ball);
\shade[shading=ball,ball color=black](1,\yoy)circle(\ball);
\end{tikzpicture}
&
\begin{tikzpicture}[scale=.6]
\foreach\x in{1}\shade[shading=ball,ball color=black](\x,\yoy)circle(\ball);
\foreach\x in{0,1,2,3,4,5,6}\draw(\x,-.5*\yoy)--++(0,4.5*\yoy);
\draw(0,4*\yoy)--++(6,0);
\foreach\x in{0,1,2,3,4,5,6}\foreach\y in{0,1,2,3}\draw(\x,\y*\yoy)++(-.5*\spac,0)--++(\spac,0);
\draw(1,2*\yoy)node(a){\ };
\draw(4,2*\yoy)node(b){\ };
\draw[->,red,thick](b)to[out=160,in=20](a);
\draw(0,\yoy)node(c){\ };
\draw(4,\yoy)node(d){\ };
\draw[->,blue,thick](c)to[out=20,in=160](d);
\draw(0,0)node(e){\ };
\draw(1,0)node(f){\ };
\draw[->,red,thick](f)to[out=160,in=20](e);
\foreach\x in{0,1,2,3,4,5,6}\shade[shading=ball,ball color=black](\x,3*\yoy)circle(\ball);
\foreach\x in{0,6}\shade[shading=ball,ball color=black](\x,\yoy)circle(\ball);
\foreach\x in{4,5,6}\shade[shading=ball,ball color=black](\x,2*\yoy)circle(\ball);
\shade[shading=ball,ball color=black](1,0)circle(\ball);
\end{tikzpicture}
\\
\la_X&\la_Y
\\[22pt]
\begin{tikzpicture}[scale=0.95,baseline=0pt]
\tyng(0cm,0cm,14,14,7,5,2)
\Ylinethick{1.5pt}
\tgyoung(0cm,0cm,::::::::::::::,::::::::::::::,::::;2;3;4,:::;0;1,::)
\end{tikzpicture}
&
\begin{tikzpicture}[scale=0.95,baseline=0pt]
\tyng(0cm,0cm,9,8,7,5,5,4,4)
\Ylinethick{1.5pt}
\tgyoung(0cm,0cm,:::::::::,::::::::,::::;2;3;4,::::;1,::::;0,::::,::::)
\end{tikzpicture}
\\[5pt]
\mu_X\sm\la_X&\mu_Y\sm\la_Y
\end{array}
\]
We see that $\Dist{X}{Y}=\{\widebar0\}$, so that $\dist{X}{Y}=1$. We use the procedure explained in the proof of \cref{grbthm}, faithfully retracing the notation, to construct a $7$-shape that has distance $0$ from both $X$ and $Y$. Let $i=\widebar0$: position $x_0=y_0=14$ is a space in $\Ab{\mu_X}$ and a bead in $\Ab{\mu_Y}$. In this case the inductive procedure stops at $k=4$ because $y_4=18$ is an addable $4$-hook at runner $0$ of $\Ab{\la_Y}$ and $r_1=11$ is a removable $4$-hook at runner $4$. It follows that $k_1=3$ since $r_1-k_1=8$ is a space and $y_1=15$ is a bead in $\Ab{\mu_Y}$. The procedure ensures that $\la_Y$ has a removable node at runner $1$. Actually, this is at position $r_2=22$. Here we stop at the first step because $r_2-1\equiv0\ppmod7$. This gives $k_2=1$ and $s=2$ ending the `removing' part. For the `adding' one we have that $t=1$ and $j_1=\widebar4$ is the only hand node of a \conncomp of $X$ -- that is actually a \rib. We then move the nodes as explained in the proof of \cref{ribthm} and shown in the figure above. 
We find a partition $\la_Z=(8^2,7,5,4^2,1)\vdash37$ such that $\mu_Y\setminus\la_Z$ has $7$-shape $Z$ such that $\dist{X}{Z}=\dist{Y}{Z}=0$ as can be easily deduced from the diagrams.

\[
\begin{array}{c}
\begin{tikzpicture}[scale=.6]
\foreach\x in{0,1,2,3,4,5,6}\draw(\x,-.5*\yoy)--++(0,4.5*\yoy);
\draw(0,4*\yoy)--++(6,0);
\foreach\x in{0,1,2,3,4,5,6}\foreach\y in{0,1,2,3}\draw(\x,\y*\yoy)++(-.5*\spac,0)--++(\spac,0);
\foreach\x in{0,1,2,3,4,5,6}\shade[shading=ball,ball color=black](\x,3*\yoy)circle(\ball);
\foreach\x in{4,5}\shade[shading=ball,ball color=black](\x,2*\yoy)circle(\ball);
\foreach\x in{0,1,4,6}\shade[shading=ball,ball color=black](\x,\yoy)circle(\ball);
\shade[shading=ball,ball color=black](1,0)circle(\ball);
\draw[white](4,2*\yoy)node{$\bigstar$};
\draw[white](0,\yoy)node{$\bigstar$};
\draw[white](1,0)node{$\bigstar$};
\end{tikzpicture}
\\
\la_Z
\\[12pt]
\begin{tikzpicture}[scale=.6]
\foreach\x in{0,1,2,3,4,5,6}\draw(\x,-.5*\yoy)--++(0,4.5*\yoy);
\draw(0,4*\yoy)--++(6,0);
\foreach\x in{0,1,2,3,4,5,6}\foreach\y in{0,1,2,3}\draw(\x,\y*\yoy)++(-.5*\spac,0)--++(\spac,0);
\foreach\x in{0,1,2,3,4,5,6}\shade[shading=ball,ball color=black](\x,3*\yoy)circle(\ball);
\foreach\x in{1,5,6}\shade[shading=ball,ball color=black](\x,2*\yoy)circle(\ball);
\foreach\x in{1,4,6}\shade[shading=ball,ball color=black](\x,\yoy)circle(\ball);
\shade[shading=ball,ball color=black](0,0)circle(\ball);
\end{tikzpicture}
\\
\mu_Z
\\[22pt]
\begin{tikzpicture}[scale=0.95]
\tyng(0cm,0cm,9,8,7,5,5,4,4)
\Ylinethick{1.5pt}
\tgyoung(0cm,0cm,::::::::;1,::::::::,:::::::,:::::,::::;0,::::,:;2;3;4)
\end{tikzpicture}
\\[3pt]
\mu_Z\sm\la_Z
\end{array}
\]
\end{eg}

We immediately deduce the following \lcnamecref{ribthm}, using induction on $\dist XY$.

\begin{cory}\label{ribthm}
Suppose $X,Y\in\calb$. Then $X$ and $Y$ are $p$-linked.
\end{cory}

We deduce as a corollary the main result of this section.

\begin{cory}\label{ribcor}
\cref{mainconj} holds for \grbs.
\end{cory}

\begin{pf}
Let $B$ be a \grb as above. It follows from \cref{ribthm} and the implication (3$\Rightarrow$4) in \cref{ribequiv} that the matrix $D_B$ is connected. As explained in the discussion following \cref{ribequiv}, this implies that the decomposition matrix of $B$ is connected, so that $B$ is a single $p$-block of $\cenalgk$.
\end{pf}

\subsection{Belt blocks}\label{secbelt}

For the rest of this section suppose that $p=n$. Our aim is to establish \cref{mainconj} for a \emph{\belbl} $B$, i.e.\ a \conjbl of $\cenalgk$ whose \cench equals $\zpz=\{\widebar0,\dots,\widebar{p-1}\}$.

As in \cref{secrib}, we first replace $B$ (again regarded as a subset of $\calp_{l,m}$) with its set $\calb$ of \confs. We keep going with the same notation established in the preface of the chapter talking about the \conncomps of a \conf. Now the \gribs are ribbons. Unfortunately in this situation we cannot mimic at all the arguments we have used for \ribbls. Indeed, even if it is still true that the \forch of a disconnected \conf is the sum of the \forchs of its \refts, a basic step of the reasoning in \cref{secrib}, namely \cref{disjch}, is no longer true in this setting. This means that, in general, different \ribs may share composition factors.

\begin{eg}
Let $p=5$. The \forchs of the following two \ribs share two summands: $(1,2,0,3,4)$ and $(1,2,3,0,4)$.
\[
\gyoung(::;3,104,2)\qquad\qquad\gyoung(:;1,:;2,:;3,04)
\]
\end{eg}

To fix this inconvenience, we need to introduce a functional 'refinement' of the set of \ribs. To do this, we begin with the introduction of a graph analogue to that in \cref{secrib}. We define a \emph{(circular) \arrg} to be a directed graph on the set $\zpz$ such that every arrow has the form $i\rightarrow i+1$ or $i+1\rightarrow i$ for some $i$ and the graph is not a directed cycle. An \arrg is a \emph{\belt} if for every $i$ there is either an arrow $i\rightarrow i+1$ or an arrow $i+1\rightarrow i$.

We will draw \arrgs with $\widebar0,\widebar1,\dots,\widebar{p-1}$ in clockwise order, omitting overlines. So we may say that the arrows of an \arrg are all oriented clockwise to mean that they all have the form $i\to i+1$.

\begin{eg}
Let $p=7$. The following are two \arrgs, the one on the right being a \belt.
\[
\begin{tikzpicture}[scale=1.2]
\foreach\x in{0,1,2,3,4,5,6}\draw(90-\x*360/7:1)node(\x){$\x$};
\draw[->](6)--(0);
\draw[->](0)--(1);
\draw[->](2)--(3);
\draw[->](4)--(3);
\draw[->](5)--(4);
\end{tikzpicture}
\qquad\qquad
\begin{tikzpicture}[scale=1.2]
\foreach\x in{0,1,2,3,4,5,6}\draw(90-\x*360/7:1)node(\x){$\x$};
\draw[->](6)--(0);
\draw[->](1)--(0);
\draw[->](2)--(1);
\draw[->](2)--(3);
\draw[->](3)--(4);
\draw[->](5)--(4);
\draw[->](5)--(6);
\end{tikzpicture}
\]
\end{eg}
We observe that every \conf $X$ in a \belbl is uniquely associated to an \arrg $\arrX$ through the same procedure explained in \cref{secrib}. For example, the \arrg on the left of the above example corresponds to the \conf below.
\[
\gyoung(::::;5,::::;4,:::;23,601)
\] 

Note that in this case the procedure does not give rise to a bijection between all possible \confs and all possible \arrgs. In particular, no \conf corresponds to a \belt. If $\Ga$ is an \arrg, we define its \forch $\ch{\Ga}$ to be the formal sum of all permutations $\bfb=(b_0,\dots,b_{p-1})$ of $\zpz$ with the property that $i$ appears before [resp.\ after] $i+1$ for every arrow $i\rightarrow i+1$ [resp.\ $i+1\rightarrow i$].
Note that if $X$ is a \conf, then $\ch{\arrX}=\ch{X}$.

Our next task is to build for every belt $\Ga$ a $\calh^{k}_p$-module whose \forch is $\ch{\Ga}$. We take a vector space $M_\Ga$ with basis $\{e_{\bfb}\}$ labelled by the different summands of $\ch\Ga$.  If $\bfb=(b_0,\dots,b_{p-1})$ is a summand of $\ch\Ga$, then for each $k\in\{1,\dots,p-1\}$ we define $\sigma_k\bfb=(b_0,\dots,b_{k+1},b_{k},\dots,b_{p-1})$ . We define an action of $\calh^k_p$ on $M_\Ga$ as follows. Fix a summand $\bold a$ of $\ch{\Ga}$. We set
\[
z_k\,\ee_{\bfb}=b_k\,\ee_{\bfb}
\]
and
\[
s_k\,\ee_{\bfb} =\begin{cases}
\frac{1}{b_{k+1}-b_k}\,\ee_{\bfb}+\ee_{\sigma_{k}\bfb} & \text{if } b_k \text{ appears \emph{after} } b_{k+1} \text{ in } \bfa,\\[3pt]
\frac{1}{b_{k+1}-b_k}\,\ee_{\bfb}+\left(1-\frac{1}{(b_{k+1}-b_k)^2}\right)\,\ee_{\sigma_k\bfb} & \text{if } b_k \text{ appears \emph{before} } b_{k+1} \text{ in } \bfa.
\end{cases}
\]

\begin{thm}\label{beltmod}
With the above rule $M_\Ga$ is a $\calh^{k}_p$-module with \forch $\ch{\Ga}$.

\begin{proof}
We just need to prove the first statement; the second part is then obvious from the definition of the \forch of a $\calh^{k}_p$-module.

We prove that $M_\Ga$ is an $\calh_p^k$-module by showing that the above defined rules respect the defining relations of $\calh^{k}_p$ shown in \cref{defdaha}. Relations $(1)$ and $(3)$ are trivially satisfied and need no further investigations. Therefore we only consider the \emph{braid} relations in $(2)$ and the relations between the polynomial and the Coxeter generators in $(4)$. 

We start from the braid relations. These divide into three types depending on the integers $m_{i,j}$ such that $(s_is_j)^{m_{i,j}}=1$. For each relation several cases arise depending on the positions of the involved entries of $\bfb$ in the fixed summand $\bfa$ of $\ch{\Ga}$. We only analyse one case for each relation, the others being similar.

\begin{itemize}
\item $s_k^2=1$. Suppose that $b_k$ appears before $b_{k+1}$ in $\bfa$. We have that \begin{align*}
s_k\ee_{\bfb}&=\frac{1}{b_{k+1}-b_k}\ee_{\bfb}+\biggr(1-\frac{1}{(b_{k+1}-b_k)^2}\biggl)\ee_{\sigma_k\bfb}\ \ \Rightarrow \\
s_k^2\ee_{\bfb}&=\frac{1}{(b_{k+1}-b_k)^2}\ee_{\bfb}+\frac{1}{b_{k+1}-b_k}\biggr(1-\frac{1}{(b_{k+1}-b_k)^2}\biggl)\ee_{\sigma_k\bfb} \\
&-\frac{1}{b_{k+1}-b_k}\biggr(1-\frac{1}{(b_{k+1}-b_k)^2}\biggl)\ee_{\sigma_k\bfb}+\biggr(1-\frac{1}{(b_{k+1}-b_k)^2}\biggl)\ee_{\bfb} \\ &=\ee_{\bfb}.
\end{align*}
\item $s_js_k=s_ks_j$ with $|k-j|>1$. Suppose that $b_k$ appears after $b_{k+1}$ and that $b_j$ appears before $b_{j+1}$ in $\bfa$. For the left-hand side we have
\begin{align*}
s_k\ee_{\bfb}&=\frac{1}{b_{k+1}-b_k}\ee_{\bfb}+\ee_{\sigma_k\bfb}\ \ \Rightarrow \\
s_js_k\ee_{\bfb}&=\frac{1}{(b_{k+1}-b_k)(b_{j+1}-b_j)}\ee_{\bfb}+\frac{1}{b_{k+1}-b_k}\biggl(1-\frac{1}{(b_{j+1}-b_j)^2}\biggr)\ee_{\sigma_j\bfb} \\
&+\frac{1}{b_{j+1}-b_j}\ee_{\sigma_k\bfb}+\biggl(1-\frac{1}{(b_{j+1}-b_j)^2}\biggr)\ee_{\sigma_j\sigma_k\bfb},
\end{align*}
while for the right-hand side we have
\begin{align*}
s_j\ee_{\bfb}&=\frac{1}{b_{j+1}-b_j}\ee_{\bfb}+\biggl(1-\frac{1}{(b_{j+1}-b_j)^2}\biggr)\ee_{\sigma_j\bfb}\ \ \Rightarrow \\
s_ks_j\ee_{\bfb}&=\frac{1}{(b_{k+1}-b_k)(b_{j+1}-b_j)}\ee_{\bfb}+\frac{1}{b_{j+1}-b_j}\ee_{\sigma_k\bfb} \\
&+\frac{1}{b_{k+1}-b_k}\biggl(1-\frac{1}{(b_{j+1}-b_j)^2}\biggr)\ee_{\sigma_j\bfb}+\biggl(1-\frac{1}{(b_{j+1}-b_j)^2}\biggr)\ee_{\sigma_k\sigma_j\bfb}.
\end{align*}
Comparing the coefficients of the different basis elements we easily gain the proof.
\item $s_ks_{k+1}s_k=s_{k+1}s_ks_{k+1}$. Suppose that $b_{k+1}$ precedes $b_{k+2}$ which precedes $b_k$ in $\bfa$. The left-hand and right-hand sides are as follows
\begin{align*}
s_k\ee_{\bfb}&=\frac{1}{b_{k+1}-b_k}\ee_{\bfb}+\ee_{\sigma_k\bfb}\ \ \Rightarrow \\
s_{k+1}s_k\ee_{\bfb}&=\frac{1}{(b_{k+1}-b_k)(b_{k+2}-b_{k+1})}\ee_{\bfb}+\frac{1}{b_{k+1}-b_k}\biggl(1-\frac{1}{(b_{k+2}-b_{k+1})^2}\biggr)\ee_{\sigma_{k+1}\bfb} \\
&+\frac{1}{b_{k+2}-b_k}\ee_{\sigma_k\bfb}+\ee_{\sigma_{k+1}\sigma_k\bfb}\ \ \Rightarrow \\
s_ks_{k+1}s_k\ee_{\bfb}&=\frac{1}{(b_{k+1}-b_k)^2(b_{k+2}-b_{k+1})}\ee_{\bfb}+\frac{1}{(b_{k+1}-b_k)(b_{k+2}-b_{k+1})}\ee_{\sigma_k\bfb} \\
&+\frac{1}{(b_{k+1}-b_k)(b_{k+2}-b_k)}\biggl(1-\frac{1}{(b_{k+2}-b_{k+1})^2}\biggr)\ee_{\sigma_{k+1}\bfb}\\
&+\frac{1}{b_{k+1}-b_k}\biggl(1-\frac{1}{(b_{k+2}-b_{k+1})^2}\biggr)\ee_{\sigma_k\sigma_{k+1}\bfb} \\
&-\frac{1}{(b_{k+2}-b_k)(b_{k+1}-b_k)}\ee_{\sigma_k\bfb}+\frac{1}{b_{k+2}-b_k}\biggl(1-\frac{1}{(b_{k+1}-b_k)^2}\biggr)\ee_{\bfb}\\
&+\frac{1}{b_{k+2}-b_{k+1}}\ee_{\sigma_{k+1}\sigma_k\bfb}+\biggl(1-\frac{1}{(b_{k+2}-b_{k+1})^2}\biggr)\ee_{\sigma_k\sigma_{k+1}\sigma_k\bfb}
\end{align*}
and
\begin{align*}
s_{k+1}\ee_{\bfb}&=\frac{1}{b_{k+2}-b_{k+1}}\ee_{\bfb}+\biggl(1-\frac{1}{(b_{k+2}-b_{k+1})^2}\biggr)\ee_{\sigma_{k+1}\bfb}\ \ \Rightarrow \\
s_ks_{k+1}\ee_{\bfb}&=\frac{1}{(b_{k+2}-b_{k+1})(b_{k+1}-b_k)}\ee_{\bfb}+\frac{1}{b_{k+2}-b_{k+1}}\ee_{\sigma_k\bfb}\\
&+\frac{1}{b_{k+2}-b_k}\biggl(1-\frac{1}{(b_{k+2}-b_{k+1})^2}\biggr)\ee_{\sigma_{k+1}\bfb}+\biggl(1-\frac{1}{(b_{k+2}-b_{k+1})^2}\biggr)\ee_{\sigma_k\sigma_{k+1}\bfb}\ \ \Rightarrow \\
s_{k+1}s_ks_{k+1}\ee_{\bfb}&=\frac{1}{(b_{k+2}-b_{k+1})^2(b_{k+1}-b_k)}\ee_{\bfb}\\
&+\frac{1}{(b_{k+2}-b_{k+1})(b_{k+1}-b_k)}\biggl(1-\frac{1}{(b_{k+2}-b_{k+1})^2}\biggr)\ee_{\sigma_{k+1}\bfb}\\
&+\frac{1}{(b_{k+2}-b_k)(b_{k+2}-b_{k+1})}\ee_{\sigma_k\bfb}+\frac{1}{b_{k+2}-b_{k+1}}\ee_{\sigma_{k+1}\sigma_k\bfb} \\
&-\frac{1}{(b_{k+2}-b_k)(b_{k+2}-b_{k+1})}\biggl(1-\frac{1}{(b_{k+2}-b_{k+1})^2}\biggr)\ee_{\sigma_{k+1}\bfb}\\
&+\frac{1}{b_{k+2}-b_k}\biggl(1-\frac{1}{(b_{k+2}-b_{k+1})^2}\biggr)\ee_{\bfb}\\
&+\frac{1}{b_{k+1}-b_k}\biggl(1-\frac{1}{(b_{k+2}-b_{k+1})^2}\biggr)\ee_{\sigma_k\sigma_{k+1}\bfb}+\biggl(1-\frac{1}{(b_{k+2}-b_{k+1})^2}\biggr)\ee_{\sigma_{k+1}\sigma_k\sigma_{k+1}\bfb}.
\end{align*}
The coefficients of $\ee_{\sigma_k\sigma_{k+1}\bfb},\ee_{\sigma_{k+1}\sigma_k\bfb}$ and $\ee_{\sigma_k\sigma_{k+1}\sigma_k\bfb}=\ee_{\sigma_{k+1}\sigma_k\sigma_{k+1}\bfb}$ are the same on both sides. The coefficients of $\ee_{\bfb},\ee_{\sigma_k\bfb}$ and $\ee_{\sigma_{k+1}\bfb}$ turn out to be equal from easy calculations comparing the two above formulas.
\end{itemize}

We finally focus on the relation $(4)$: $s_kz_k=z_{k+1}s_k-1$. Suppose that $b_k$ precedes $b_{k+1}$ in $\bfa$. We evaluate the action of both sides of $(4)$ on $\ee_{\bfb}$. On the left-hand we have
\begin{align*}
z_k\ee_{\bfb}&=b_k\ee_{\bfb}\ \ \Rightarrow \\
s_kz_k\ee_{b}&=\frac{b_k}{b_{k+1}-b_{k}}\ee_{\bfb}+b_k\biggl(1-\frac{1}{(b_{k+1}-b_k)^2}\biggr)\ee_{\sigma_k\bfb},
\end{align*}
while on the right we have
\begin{align*}
s_k\ee_{\bfb}&=\frac{1}{b_{k+1}-b_k}\ee_{\bfb}+\biggl(1-\frac{1}{(b_{k+1}-b_k)^2}\biggr)\ee_{\sigma_k\bfb}\ \ \Rightarrow \\
(z_{k+1}s_k-1)\ee_{\bfb}&=\frac{b_{k+1}}{b_{k+1}-b_k}\ee_{\bfb}+b_k\biggl(1-\frac{1}{(b_{k+1}-b_k)^2}\biggr)\ee_{\sigma_k\bfb}-\ee_{\bfb}.
\end{align*}
It is now straightforward to check that the coefficients of $\ee_{\bfb}$ and $\ee_{\sigma_k\bfb}$ are equal on both sides. This ends the proof.
\end{proof}
\end{thm}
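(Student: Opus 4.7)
The plan is to verify directly that the given formulas satisfy the defining relations of $\calh_p^k$ from \cref{defdaha}, and then to read off the \forch from the diagonal action of the polynomial generators. Before starting I would check that the action is well-posed: the entries of any summand $\mathbf{b}=(b_0,\dots,b_{p-1})$ of $\ch{\Ga}$ are the $p$ distinct residues in $\zpz$, viewed in $k$ in the obvious way, so $b_{k+1}-b_k\ne0$ in $k$ and $(b_{k+1}-b_k)^{-1}$ makes sense. One also needs to check that whenever the coefficient of $\ee_{\sigma_k\mathbf{b}}$ in the formula for $s_k\ee_{\mathbf{b}}$ is non-zero, $\sigma_k\mathbf{b}$ is genuinely a summand of $\ch\Ga$: in the ``before'' case the coefficient $1-(b_{k+1}-b_k)^{-2}$ vanishes exactly when $b_{k+1}-b_k\equiv\pm1\pmod p$, which is precisely when an arrow of $\Ga$ between these residues would forbid the swap; and in the ``after'' case the existence of such an arrow would force a fixed order on $b_k,b_{k+1}$ in every summand, contradicting the hypothesis on $\mathbf{a}$ compared with $\mathbf{b}$.

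Relations (1) and (3) of \cref{defdaha} are immediate: the $z_i$ all act diagonally, so they pairwise commute, and $s_k$ only alters the $k$th and $(k{+}1)$st coordinates of the eigenvalue tuple, so it commutes with $z_j$ for $j\ne k,k+1$. For relation (4), $s_iz_i=z_{i+1}s_i-1$, I would compute both sides applied to $\ee_{\mathbf{b}}$ in each of the two cases (whether $b_i$ lies before or after $b_{i+1}$ in $\mathbf{a}$) and compare the coefficients of $\ee_{\mathbf{b}}$ and $\ee_{\sigma_i\mathbf{b}}$; each case reduces to a short algebraic identity in $k$.

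The heart of the proof is the braid relations (2), which I would split into three pieces. The involution $s_k^2=1$ reduces to two short computations, one per case. The commutation $s_js_k=s_ks_j$ for $|j-k|>1$ reduces to a routine matching of coefficients across the four sub-cases given by the relative orders of $(b_j,b_{j+1})$ and $(b_k,b_{k+1})$ in $\mathbf{a}$; here the two swaps $\sigma_j$ and $\sigma_k$ already commute as permutations, so the content is only a coefficient comparison. The main obstacle is the long braid relation $s_ks_{k+1}s_k=s_{k+1}s_ks_{k+1}$: there are six sub-cases according to the linear order of $b_k,b_{k+1},b_{k+2}$ in $\mathbf{a}$, and in each one must expand a triple product and verify that the coefficients of the six basis vectors $\ee_{\mathbf{b}}$, $\ee_{\sigma_k\mathbf{b}}$, $\ee_{\sigma_{k+1}\mathbf{b}}$, $\ee_{\sigma_k\sigma_{k+1}\mathbf{b}}$, $\ee_{\sigma_{k+1}\sigma_k\mathbf{b}}$, $\ee_{\sigma_k\sigma_{k+1}\sigma_k\mathbf{b}}$ agree on both sides. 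This is a lengthy but mechanical symbolic calculation in the three differences $b_{k+1}-b_k$, $b_{k+2}-b_{k+1}$, $b_{k+2}-b_k$, and some sub-cases follow from others by the symmetry between the two sides of the relation.

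Finally, the \forch statement is immediate from the construction: the subalgebra $k[z_1,\dots,z_p]$ acts diagonally on $M_\Ga$ in the basis $\{\ee_{\mathbf{b}}\}$, with $\ee_{\mathbf{b}}$ carrying the character $(b_0,\dots,b_{p-1})$, so summing over all summands of $\ch\Ga$ recovers $\ch\Ga$ by definition.
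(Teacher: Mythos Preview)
Your approach is essentially the same as the paper's: both verify the defining relations of $\calh_p^k$ directly by case analysis on the relative positions of the relevant entries in the fixed summand $\mathbf{a}$, noting that relations (1) and (3) are immediate and that the formal character statement follows at once from the diagonal action of the $z_i$. Your explicit well-posedness check (that $b_{k+1}-b_k$ is invertible, and that $\sigma_k\mathbf{b}$ is a summand of $\ch\Ga$ whenever its coefficient is non-zero) is a genuine addition over the paper, which tacitly assumes this; your argument for it is correct and worth keeping.
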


\cref{beltmod} strengthens the above discussion and shows that \emph{every} \arrg corresponds to a $\calh^{k}_p$-module. In particular, we can talk about the composition factors of a belt, meaning the composition factors of the corresponding $\calh^k_p$-module. Later on we may look at the set of \confs $\calb$ as a subset of the set of all possible \arrgs. 

As in \cref{secrib} we rely on the notion of distance between \confs. Given \confs $X$ and $Y$, we define $\Dist{X}{Y}$ and $\dist{X}{Y}$ as in \cref{secrib}.

\begin{eg}
Let $p=7$, and let $X$ and $Y$ be the \confs below. From the corresponding \arrgs, we see that $\Dist{X}{Y}=\{\widebar0,\widebar4\}$, and therefore $\dist{X}{Y}=2$.
\[
\begin{array}{c@{\qquad\qquad}c}
\gyoung(:::::;5,601234)
&
\gyoung(::::;6,12345,0)
\\
X&Y
\\[10pt]
\begin{tikzpicture}[scale=1.2]
\foreach\x in{0,1,2,3,4,5,6}\draw(90-\x*360/7:1)node(\x){$\x$};
\draw[->](6)--(0);
\draw[->](0)--(1);
\draw[->](1)--(2);
\draw[->](2)--(3);
\draw[->](3)--(4);
\draw[->](5)--(4);
\end{tikzpicture}
&
\begin{tikzpicture}[scale=1.2]
\foreach\x in{0,1,2,3,4,5,6}\draw(90-\x*360/7:1)node(\x){$\x$};
\draw[->](1)--(0);
\draw[->](1)--(2);
\draw[->](2)--(3);
\draw[->](3)--(4);
\draw[->](4)--(5);
\draw[->](6)--(5);
\end{tikzpicture}
\\
\Ga_X&\Ga_Y
\end{array}
\]

\end{eg}

We can now state a variant of \cref{disjch}.

\begin{lemma}\label{disjch2}
Consider $\bfa=(a_0,\dots,a_{p-1})$ where $a_0,\dots,a_{p-1}$ equal $\widebar0,\dots,\widebar{p-1}$ in some order. Then $\bfa$ appears in the \forch of exactly one \belt.

\begin{pf}
This result follows similarly to \cref{disjch}. Take an empty \arrg and fill its edges according to the relative position between the entries of $\bfa$ until a \belt $X$ is reached. By construction $\bfa$ is a term of the \forch $\ch{X}$. If a different \belt $Y$ is considered, then $\Dist{X}{Y}\ne\emptyset$ and repeating the last argument in the proof of \cref{disjch}, we see that $\ch{X}$ and $\ch{Y}$ have no terms in common.
\end{pf}

\end{lemma}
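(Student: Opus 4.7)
The plan is to mimic the proof of \cref{disjch}, with one genuinely new subtlety: the \arrg I construct from $\bfa$ must be verified not to be a directed cycle, so that it qualifies as a \belt.

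First I would construct a candidate \belt $X$ directly from $\bfa$: for each residue $i \in \zpz$, place an arrow $i \to i+1$ in $X$ if $i$ appears before $i+1$ in $\bfa$, and an arrow $i+1 \to i$ otherwise. This fills every edge of the circular graph, so $X$ has arrows between every consecutive pair in $\zpz$. Directly from the definition of the \forch of an \arrg, $\bfa$ is then automatically a summand of $\ch X$.

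The one new step compared to \cref{disjch} is checking that $X$ is not a directed cycle. If all arrows of $X$ were oriented clockwise, then by construction $i$ would appear before $i+1$ in $\bfa$ for every $i \in \zpz$; chaining these $p$ relations gives that $0$ appears before itself in $\bfa$, which is absurd. The symmetric argument excludes the counterclockwise orientation, so $X$ is a legitimate \arrg and hence a \belt.

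Uniqueness follows exactly as in \cref{disjch}: if $Y$ is any \belt different from $X$, then since both have an arrow between every consecutive pair, they must disagree on at least one edge, so $\Dist XY \neq \emptyset$. Picking $i \in \Dist XY$ with $X$ containing $i \to i+1$ and $Y$ containing $i+1 \to i$ (the opposite case being symmetric), the construction of $X$ forces $i$ to appear before $i+1$ in $\bfa$, while every summand of $\ch Y$ must have $i$ after $i+1$; hence $\bfa$ does not appear in $\ch Y$. The only real obstacle is the cycle exclusion, and that reduces to the immediate observation that no linear order on the entries of $\bfa$ can realise a cyclic chain of $p$ strict inequalities.
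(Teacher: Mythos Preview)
Your proof is correct and follows exactly the same approach as the paper's: construct the \belt by orienting each edge according to the relative position of $i$ and $i+1$ in $\bfa$, then deduce uniqueness from the fact that any two distinct \belts have $\Dist{X}{Y}\neq\emptyset$. Your explicit verification that the resulting graph is not a directed cycle is a point the paper's proof leaves implicit, so in that respect your argument is slightly more careful.
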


\begin{cory}
Different \belts have no composition factors in common.
\end{cory}

At this point we can mimic the arguments in \cref{secrib} using \belts in place of maximal \confs.

\begin{defn} Let $X$ be a \conf. A \belt $Y$ is a \emph{\reft} of $X$ if $Y$ can be obtained by adding edges to~$X$. 
\end{defn}

\begin{cory}\label{refs2}
Let $X$ be a \conf. The \forch of $X$ is the sum of the \forchs of the \refts of $X$.
\begin{pf}
Given $Y$ a \reft of $X$, every term of $\ch{Y}$ is also a term of $\ch{X}$ (just because $X$ is obtained forgetting some arrows of $Y$). Conversely, each term of $\ch{X}$ is a term of the \forch of a unique \reft of $X$: for every $j$ and $j+1\ppmod p$ that are not linked by an arrow in $X$, the relative order of these entries in the chosen term of $\ch{X}$ indicate how they need to be linked to find the desired \belt.
\end{pf}
\end{cory}

As in \cref{compfact}, \cite[Theorem 5.3.1]{klbook} ensures the following.

\begin{cory}\label{beltref}
If $X$ is a \conf, then the composition factors of $X$ are the composition factors of its \refts.

\end{cory}

We are know ready to establish a result which gives us a way to recognise when two Specht $\cenalgk$-modules share a composition factor.

\begin{thm}\label{beltequiv}
Let $X$ and $Y$ be \confs. The following are equivalent:
\begin{enumerate}
\item $X$ and $Y$ have a composition factor in common;
\item The \forchs of $X$ and $Y$ have a term in common;
\item $\dist{X}{Y}=0$ and $\arrX\cup\arrY$ is not a directed cycle;
\item There is a \belt which is a \reft of both $X$ and $Y$.
\end{enumerate}
\end{thm}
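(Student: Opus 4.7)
The plan is to follow the same circular template as \cref{ribequiv}, with appropriate adjustments to handle the fact that the union $\arrX\cup\arrY$ lives on a cycle of length $p$ and could accidentally close up.

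First I would dispatch $(1)\Rightarrow(2)$ exactly as in \cref{ribequiv}: the \forch of any $\calh_p^k$-module is the sum of the \forchs of its composition factors, so a shared composition factor forces a common summand. For $(4)\Rightarrow(1)$, let $Z$ be a common \reft of $X$ and $Y$. The module $M_Z$ produced by \cref{beltmod} is non-zero, hence has at least one simple composition factor $S$. By \cref{beltref}, $S$ is a composition factor of both $X$ and $Y$.

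Next I would handle $(2)\Rightarrow(3)$. Assume a common summand $\bfa=(a_0,\dots,a_{p-1})$. Every arrow $i\to i+1$ in $\arrX$ forces $i$ to appear before $i+1$ in $\bfa$, and likewise for $\arrY$. If $i\in\Dist XY$, then $\bfa$ would have to both precede and follow $i+1$ with $i$, which is impossible; so $\dist XY=0$. Moreover, if $\arrX\cup\arrY$ were a directed cycle, $\bfa$ would have to respect a cyclic chain of precedences among $\widebar0,\dots,\widebar{p-1}$, which no total ordering can do. This gives $(2)\Rightarrow(3)$.

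The main step, and the one I expect to require the most care, is $(3)\Rightarrow(4)$. Given $\dist XY=0$ and $\arrX\cup\arrY$ not a directed cycle, I need to fill in the missing edges $\{i,i+1\}$ to obtain a \belt $Z$, i.e.\ an \arrg with an arrow on every edge that is not itself a directed cycle. Since $\dist XY=0$, there are no conflicting arrows in $\arrX\cup\arrY$, so $Z$ will be a well-defined \arrg. The only obstacle is accidentally producing a directed cycle. I would split into two cases. If $\arrX\cup\arrY$ already contains arrows in both cyclic orientations, then any completion is automatically not a directed cycle. If instead all arrows in $\arrX\cup\arrY$ share a single cyclic orientation, then by hypothesis $\arrX\cup\arrY$ is not yet the full cycle, so at least one edge is missing; I can orient one missing edge in the opposite cyclic direction and then orient the remaining missing edges arbitrarily, and the result contains an arrow against the cycle, hence cannot be a directed cycle. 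In either case the resulting $Z$ is a \belt refining both $X$ and $Y$, which finishes the proof of $(3)\Rightarrow(4)$ and closes the loop.
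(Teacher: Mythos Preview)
Your proof is correct and follows essentially the same cyclic scheme $(1)\Rightarrow(2)\Rightarrow(3)\Rightarrow(4)\Rightarrow(1)$ as the paper. Your argument for $(2)\Rightarrow(3)$ is slightly more direct than the paper's (which instead compares the possible first entries of terms in $\ch X$ and $\ch Y$ via foot-node residues), and your case split in $(3)\Rightarrow(4)$ spells out explicitly what the paper leaves as a parenthetical remark, but the substance is the same.
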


\begin{rmk}
Before proving \cref{beltequiv} we dwell a moment on the meaning of condition $(3)$. Since $\dist{X}{Y}=0$, the overlapping $\arrX\cup\arrY$ is an admissible \arrg. We have that $\arrX\cup\arrY$ is a directed cycle if and only if all arrows in $\arrX$ and $\arrY$ are anticlockwise (or all clockwise) and there is no $k\in\zpz$ such that $\bxk$ is simultaneously the foot node of a \conncomp of $X$ and a \conncomp of $Y$.
\end{rmk}

\begin{eg}
Let $p=7$, and let $X$ and $Y$ be the \confs given below. All arrows in $\Ga_X$ and $\Ga_Y$ are oriented clockwise (because neither $X$ nor $Y$ has a node immediately above another). Moreover, $X$ and $Y$ do not have foot nodes of the same residue. So $\Ga_X\cup\Ga_Y$ is a directed cycle. As a consequence, the formal characters of $X$ and $Y$ have no term in common, so $X$ and $Y$ do not share a composition factor.
\[
\begin{array}{c@{\qquad\qquad}c}
\gyoung(::::;456,0123)
&
\young(5601234)
\\[12pt]
X&Y
\\[10pt]
\begin{tikzpicture}[scale=1.2]
\foreach\x in{0,1,2,3,4,5,6}\draw(90-\x*360/7:1)node(\x){$\x$};
\draw[->](0)--(1);
\draw[->](1)--(2);
\draw[->](2)--(3);
\draw[->](4)--(5);
\draw[->](5)--(6);
\end{tikzpicture}
&
\begin{tikzpicture}[scale=1.2]
\foreach\x in{0,1,2,3,4,5,6}\draw(90-\x*360/7:1)node(\x){$\x$};
\draw[->](0)--(1);
\draw[->](1)--(2);
\draw[->](2)--(3);
\draw[->](3)--(4);
\draw[->](6)--(0);
\draw[->](5)--(6);
\end{tikzpicture}
\\
\Ga_X&\Ga_Y
\end{array}
\]
\end{eg}

\begin{pf}[Proof of \cref{beltequiv}]
\begin{description}[beginthm]
\item[\rm (1$\Rightarrow2$)]
This is trivial because the \forch of a $\calh_p^k$-module is the sum of the \forchs of its composition factors.

\item[\rm (2$\Rightarrow3$)]
Suppose that $\ch{X}$ and $\ch{Y}$ have a term in common. Then it immediately follows that $\dist{X}{Y}=0$. Suppose for a contradiction that $\arrX\cup\arrY$ is a clockwise directed cycle and set
\[
F_X\vcentcolon=\{k\in\cch{B}\ |\ \bxk \text{ is the foot node of a \conncomp of } X\}
\]
and \[
F_Y\vcentcolon=\{k\in\cch{B}\ |\ \bxk \text{ is the foot node of a \conncomp of } Y\}.
\]
(For example, for the $7$-shapes in the example above, $F_X=\{\widebar0,\widebar4\}$ and $F_Y=\{\widebar5\}$.) The remark above shows that $F_X\cap F_Y=\emptyset$. Now since every term of $\ch{X}$ has an element of $F_X$ as first component and every term of $\ch{Y}$ has an element of $F_Y$ as first component, it follows that $\ch{X}$ and $\ch{Y}$ have no terms in common and this contradicts the hypothesis. (If the arrows in $\arrX\cup\arrY$ are all anticlockwise, the proof follows similarly replacing "foot" with "hand" in the definitions of $F_X$ and $F_Y$ above.)

\item[\rm (3$\Rightarrow4$)]
By the hypothesis the union $\arrX\cup\arrY$ is an \arrg. This can be refined to a belt by adding in arrows if necessary (ensuring that these arrows are not all oriented the same way as all the existing arrows). This belt is a \reft of both $X$ and~$Y$.

\item[\rm (4$\Rightarrow1$)]
This follows from \cref{beltref}.\qedhere
\end{description}
\end{pf}

It seems natural that the set of \belts gives a family of simple $\calh_p^k$-modules. Despite this, we will not focus on this question in the present paper. We write our suspect as a conjecture such that it can inspire further research.

\begin{conj}
	Let $\Ga$ be a belt. Then the $\calh^{k}_p$-module $M_{\Ga}$ is simple.
\end{conj}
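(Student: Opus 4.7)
The plan is to prove the conjecture by showing that any nonzero $\calh_p^k$-submodule $N\subseteq M_\Ga$ equals $M_\Ga$. This splits into two stages. First I would exhibit an element of $N$ of the form $e_{\bfb}$: the polynomial subalgebra $k[z_1,\dots,z_p]$ acts diagonally on the distinguished basis via $z_j\cdot e_{\bfb}=b_{j-1}\,e_{\bfb}$, and distinct summands of $\ch{\Ga}$ give distinct weight vectors in $k^p$ (they are distinct permutations of the residues $\widebar 0,\dots,\widebar{p-1}$, which remain distinct in $k$ because $p$ is prime). Consequently every weight space is one-dimensional, and since $N$ is stable under $k[z_1,\dots,z_p]$ it decomposes into weight spaces; in particular $N$ contains some basis vector~$e_{\bfb}$.

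The second stage is to prove that $\calh_p^k\cdot e_{\bfb}=M_\Ga$. The crucial observation is a dichotomy for the adjacent swap $\sigma_k\bfb$: it is again a summand of $\ch{\Ga}$ if and only if $b_k$ and $b_{k+1}$ are \emph{not} adjacent residues in $\zpz$. Indeed, if $b_k$ and $b_{k+1}$ were comparable in the poset $P$ on $\zpz$ induced by $\Ga$ (defined by $i\le_P j$ iff $\Ga$ contains a directed path from $i$ to $j$) via a chain of length greater than one, then every intermediate residue along the chain would have to lie strictly between $b_k$ and $b_{k+1}$ in any valid summand -- impossible when they occupy consecutive positions. Inspecting the formulas in \cref{beltmod}, the coefficient of $e_{\sigma_k\bfb}$ in $s_k\,e_{\bfb}$ is either $1$ or $1-(b_{k+1}-b_k)^{-2}$, and in both cases this quantity is nonzero precisely when $b_{k+1}-b_k\neq\pm1$ in~$k$. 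This matches the combinatorial dichotomy: the action is well-defined (the coefficient vanishes when $\sigma_k\bfb$ is not a summand) and, conversely, whenever $\sigma_k\bfb$ \emph{is} a summand the coefficient is nonzero, so subtracting a scalar multiple of $e_{\bfb}$ from $s_k\,e_{\bfb}$ produces $e_{\sigma_k\bfb}\in N$.

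To finish, I would invoke the classical fact that the linear extension graph of a finite poset is connected under adjacent transpositions of incomparable elements. Since the summands of $\ch{\Ga}$ are precisely the linear extensions of $P$, and admissible adjacent swaps as above correspond exactly to the edges of this graph, any two valid summands $\bfb,\bfb'$ can be joined by a sequence of such swaps; induction on the length of the sequence shows that $e_{\bfb'}\in N$ for every $\bfb'$, so that $N=M_\Ga$ and hence $M_\Ga$ is simple.

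The principal obstacle is the numerical coincidence at the heart of the second stage: verifying that the coefficients $(b_{k+1}-b_k)^{-1}$ and $1-(b_{k+1}-b_k)^{-2}$ in the seminormal-style formulas for $s_k$ vanish exactly in the forbidden configurations where $\sigma_k\bfb$ ceases to be a valid summand. This single identification simultaneously ensures that the construction of \cref{beltmod} yields a genuine $\calh_p^k$-module and that the extraction argument above can succeed -- in effect, the same fortuitous cancellation makes $M_\Ga$ both well-defined and simple.
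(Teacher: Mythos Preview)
The paper does not prove this statement: it is explicitly presented as an open conjecture, with the authors writing that they ``will not focus on this question in the present paper'' and recording it ``such that it can inspire further research.'' So there is no proof in the paper to compare against.

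Your argument, however, is essentially correct and follows the standard route to simplicity for modules presented in seminormal form. The weight-space step is sound: the basis vectors $e_{\bfb}$ carry pairwise distinct weights under $k[z_1,\dots,z_p]$ because distinct permutations of $\widebar0,\dots,\widebar{p-1}$ remain distinct as tuples over the prime field. The dichotomy is also correct: a belt is an acyclic orientation of the cycle $C_p$, so two consecutive entries $b_k,b_{k+1}$ of a linear extension are comparable in the induced poset if and only if they are joined by an arrow of $\Ga$, i.e.\ if and only if $b_{k+1}-b_k=\pm1$ in $\zpz$; moreover in that case the arrow necessarily points $b_k\to b_{k+1}$ (else $\bfb$ itself would violate it), which forces the coefficient $1-(b_{k+1}-b_k)^{-2}=0$ to apply. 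When $b_{k+1}-b_k\neq\pm1$ the coefficient, whether $1$ or $1-(b_{k+1}-b_k)^{-2}$, is nonzero in $k$, so $e_{\sigma_k\bfb}$ can be extracted. Connectivity of the linear-extension graph under adjacent transpositions of incomparable elements is classical and finishes the proof.

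One phrasing to tighten: you say the coefficient ``is nonzero precisely when $b_{k+1}-b_k\neq\pm1$''. The coefficient $1$ is of course always nonzero; the real point is that the coefficient-$1$ branch of the formula cannot arise when $b_{k+1}-b_k=\pm1$, since then $\bfa$ and $\bfb$ would order $b_k,b_{k+1}$ oppositely while both respect the same arrow of $\Ga$. This is implicit in your reasoning but deserves to be made explicit.
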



Now we construct an approximation to the decomposition matrix as in \cref{secrib}: we construct the matrix
\[
D_B=(d_{X,Y})_{X\in\calb,Y\in\mathrm{Belt}},
\]
where $d_{X,Y}=1$ if $Y$ is a refinement of $X$, and $0$ otherwise. As in \cref{secrib}, showing that $D_B$ is a connected matrix is sufficient to show that $B$ is a block.

\begin{defn}
Let $X,Y$ be \confs in $\calb$. We say that $X$ and $Y$ are \emph{$p$-linked} if there exists a series of \confs $X=X_1,X_2,\dots,X_{t-1},X_t=Y$ in $\calb$ such that $X_j$ and $X_{j+1}$ satisfy \emph{any} of the equivalent conditions in \cref{beltequiv}, for every $1\le j\le t-1$.
\end{defn}

We now move to the heart of this section. First of all we show that we can reduce to the case where two \confs in a \belbl have distance zero.

\begin{propn}\label{prop2belt}
Let $X$ and $Y$ be \confs in $\calb$ with $\dist{X}{Y}>0$. There exists a \conf $Z\in\calb$ such that $\dist{X}{Z}<\dist{X}{Y}$ and $\dist{Y}{Z}<\dist{X}{Y}$.

\begin{pf}
Let $\X$ and $\Y$ be \skewps in $B$ having \confs $X$ and $Y$, respectively. We want to show that there is a \conf $Z\in\calb$ that satisfies the conditions in the statement. The proof of this result is very similar to that of \cref{grbthm}. Let $i\in\Dist{X}{Y}$. Here we can state and prove an intermediate claim, i.e. a perfect analogue of Claim 1 in \cref{grbthm}, running for $k\in\{i+1,\dots,i+p+1\}$ with $k\notin\Dist XY$. If for any of these steps, a modifications of $\mu_X$ or $\la_Y$ is allowed, then we are done performing the appropriate modification exploited in \cref{grbthm}. Alternatively, if for all $k$ no modifications have been allowed, we first find integers $k_1,\dots,k_s$ with $\sum_{1\le u\le s}k_s=p$, and positions $c_1,\dots,c_s,d_1,\dots,d_s$ of the abacus display such that $c_1,\dots,c_s$ are beads and $d_1,\dots,d_s$ are spaces in $\Ab{\mu_X}$. The proof then follows observing that $\mu_X$ has a addable $p$-hook at runner $i$ (realised by the biggest bead position at runner $i$ of $\Ab{\mu_X}$): the desired partition $\mu_Z\vdash m$ is obtained from $\mu_X$ deleting \emph{all} the nodes in $\X$ and adding them in different positions (moving the bead at $c_v$ to the space at $d_v$ for all $v$). The choice of $k_1,\dots,k_s$ implies that $\Xmod$ has the desired \conf $Z$.
\end{pf}
\end{propn}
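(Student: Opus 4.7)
The plan is to adapt the proof of \cref{grbthm}, replacing the role played there by the gap $i+r+1\notin\calc$ (which forced the induction to terminate at a star bead) with the fact that every partition admits an addable $p$-hook at every runner of its abacus, realised by moving down the largest bead at that runner.

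First, I would pick $i\in\Dist{X}{Y}$ and assume without loss of generality that $\bxi$ lies to the left of $\bxipo$ in $X$ and below $\bxipo$ in $Y$. For every $j\in\zpz$ let $x_j,y_j$ denote the positions of $\bxj$ in $\Ab{\mu_X}$ and $\Ab{\mu_Y}$ respectively; then $x_i$ is a space in $\Ab{\mu_X}$ and $y_i$ is a non-star bead in $\Ab{\mu_Y}$. Proceeding by contradiction, suppose that no \conf $Z$ with the required distance properties exists.

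Next I would state and prove a circular analogue of Claim 1 in \cref{grbthm}: for every integer $k\in\{i+1,\dots,i+p+1\}$ (with residues read modulo $p$) such that $k\notin\Dist{X}{Y}$, certain abacus constraints on $\mu_X$ and $\la_Y$ must hold concerning the existence of addable and removable $(k-i)$-hooks at runners $i$ and $k$. The inductive step would proceed verbatim as in \cref{grbthm}, including the auxiliary Claim 2 that the position $x_k+i-k$ is a space in $\Ab{\mu_X}$. Whenever a step permits a modification --- that is, whenever the ``removing and re-adding'' procedure on $\mu_X$, or its dual on $\la_Y$, can be carried out --- the conclusion would follow exactly as in \cref{grbthm} by producing either $Z$ or its dual $W$.

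The new ingredient handles the case in which no step in $\{i+1,\dots,i+p+1\}$ permits a modification. Accumulating data from the inductive procedure yields integers $k_1,\dots,k_s$ with $\sum_{u=1}^{s} k_u=p$, together with abacus positions $c_1,\dots,c_s$ (beads) and $d_1,\dots,d_s$ (spaces) in $\Ab{\mu_X}$. The crucial observation is that $\mu_X$ automatically has an addable $p$-hook at runner $i$: take the largest bead $a$ at runner $i$ of $\Ab{\mu_X}$, so that position $a+p$ is a space. Using this, I would define $\mu_Z$ by starting from $\mu_X$, removing all the $p$ nodes of $\X$, and re-inserting them via the bead moves $c_v\mapsto d_v$ combined with the move $a\mapsto a+p$. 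The resulting \skewp $\Xmod$ lies in $B$, and its \conf $Z$ satisfies $\dist{X}{Z}<\dist{X}{Y}$ and $\dist{Y}{Z}<\dist{X}{Y}$.

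The main obstacle is the delicate abacus bookkeeping around the wraparound. In the ribbon setting the recursion terminated naturally at $k=i+r$ because $x_{i+r}$ had to be a star bead; here no such boundary exists, so one must verify that after a full traversal of $\zpz$ the assembled collection of removal and re-insertion moves is mutually consistent, that the resulting $\mu_Z$ is genuinely a partition of $m$ with $\la_X\subseteq\mu_Z$, and that the net effect on $\Dist{X}{Z}$ and $\Dist{Y}{Z}$ is a strict decrease relative to $\Dist{X}{Y}$. The circular version of Claim 1 is tailored precisely so that this verification reduces to the argument of the ribbon case, with the guaranteed addable $p$-hook at runner $i$ providing the single extra degree of freedom needed to close the loop.
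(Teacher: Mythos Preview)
Your proposal is correct and matches the paper's approach: both adapt \cref{grbthm} via a circular version of Claim~1 running over $k\in\{i+1,\dots,i+p+1\}$, and when no intermediate step permits a modification, both invoke the always-available addable $p$-hook at runner $i$ (realised by the largest bead on that runner) to construct $\mu_Z$ by removing all of $\X$ and re-inserting the nodes elsewhere. One minor imprecision in your write-up: the re-insertion is accomplished entirely by the moves $c_v\mapsto d_v$ (the addable $p$-hook supplies $a_1$ and hence $d_1=a_1+p$, with $c_1$ then chosen as in \cref{grbthm}), not by a separate additional move $a\mapsto a+p$ on top of these.
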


\begin{eg}
Let $m=119$, $l=112$ and $p=n=7$. Consider the \skewps
\begin{align*}
\X&=(20,18,17,16,12^4)\setminus(20,18,17,16,12^2,11,6),
\\
\Y&=(21^2,17,15,14,13,12,6)\setminus(20,16^2,15,14,13,12,6)
\end{align*}
in $\calp_{112,119}$ having $7$-shapes $X$ and $Y$, respectively. These belong to the same \conjbl of $\calc_{112,119}^{k}$ as can be easily checked (all the partitions involved have $7$-core $(13,7,3,2^2,1)\vdash28$). Below we show the \abacs with charge 21 and the \arrgs of $X$ and $Y$.

\newcommand\ball{.3}
\newcommand\spac{.15}
\newcommand\yoy{.9}
\newcommand\bound{\path[clip](-1,-1.5)rectangle(7,7*\yoy);}
\[
\begin{array}{c@{\qquad\qquad}c}
	\begin{tikzpicture}[scale=.6]
		\bound
		\foreach\x in{0,1,2,3,4,5,6}\draw(\x,-1.5*\yoy)--++(0,7.5*\yoy);
		\draw(0,6*\yoy)--++(6,0);
		\foreach\x in{0,1,2,3,4,5,6}\foreach\y in{-1,0,1,2,3,4,5}\draw(\x,\y*\yoy)++(-.5*\spac,0)--++(\spac,0);
		\foreach\x in{0,1,2,3,4,5,6}\shade[shading=ball,ball color=black](\x,5*\yoy)circle(\ball);
		\foreach\x in{0,1,2,3,4,5}\shade[shading=ball,ball color=black](\x,4*\yoy)circle(\ball);
		\foreach\x in{4,5,6}\shade[shading=ball,ball color=black](\x,2*\yoy)circle(\ball);
		\foreach\x in{0,5}\shade[shading=ball,ball color=black](\x,1*\yoy)circle(\ball);
		\foreach\x in{0,2,5}\shade[shading=ball,ball color=black](\x,0*\yoy)circle(\ball);
\draw(5,2*\yoy)node(a){\ };
\draw(5,3*\yoy)node(b){\ };
\draw(12,3*\yoy)node(aa){\ };
\draw(-2,2*\yoy)node(bb){\ };
\draw[->,red,thick](a)to[out=160,in=20](bb);
\draw[->,red,thick](aa)to[out=160,in=20](b);
\draw(2,0*\yoy)node(e){\ };
\draw(0,-1*\yoy)node(f){\ };
\draw(-5,-1*\yoy)node(ee){\ };
\draw(7,0*\yoy)node(ff){\ };
\draw[blue,thick](e)to[out=20,in=160](ff);
\draw[->,blue,thick](ee)to[out=20,in=160](f);
\draw(0,1*\yoy)node(g){\ };
\draw(2,1*\yoy)node(h){\ };
\draw[->,blue,thick](g)to[out=20,in=160](h);
		\draw[white](5,2*\yoy)node{$\bigstar$};
	\end{tikzpicture}
	&
	\begin{tikzpicture}[scale=.6]
		\bound
		\foreach\x in{0,1,2,3,4,5,6}\draw(\x,-1.5*\yoy)--++(0,7.5*\yoy);
		\draw(0,6*\yoy)--++(6,0);
		\foreach\x in{0,1,2,3,4,5,6}\foreach\y in{-1,0,1,2,3,4,5}\draw(\x,\y*\yoy)++(-.5*\spac,0)--++(\spac,0);
		\foreach\x in{0,1,2,3,4,5,6}\shade[shading=ball,ball color=black](\x,5*\yoy)circle(\ball);
		\foreach\x in{0,1,2,3,4,5}\shade[shading=ball,ball color=black](\x,4*\yoy)circle(\ball);
		\foreach\x in{5}\shade[shading=ball,ball color=black](\x,3*\yoy)circle(\ball);
		\foreach\x in{5}\shade[shading=ball,ball color=black](\x,2*\yoy)circle(\ball);
		\foreach\x in{0,2,4}\shade[shading=ball,ball color=black](\x,1*\yoy)circle(\ball);
		\foreach\x in{0,5,6}\shade[shading=ball,ball color=black](\x,0*\yoy)circle(\ball);
		\draw[white](6,0*\yoy)node{$\bigstar$};
	\end{tikzpicture}
	\\
	\mu_X&\mu_Y
	\\[12pt]
	\begin{tikzpicture}[scale=.6]
		\bound
		\foreach\x in{0,1,2,3,4,5,6}\draw(\x,-1.5*\yoy)--++(0,7.5*\yoy);
		\draw(0,6*\yoy)--++(6,0);
		\foreach\x in{0,1,2,3,4,5,6}\foreach\y in{-1,0,1,2,3,4,5}\draw(\x,\y*\yoy)++(-.5*\spac,0)--++(\spac,0);
		\foreach\x in{0,1,2,3,4,5,6}\shade[shading=ball,ball color=black](\x,5*\yoy)circle(\ball);
		\foreach\x in{0,1,2,3,4,5}\shade[shading=ball,ball color=black](\x,4*\yoy)circle(\ball);
		\foreach\x in{5}\shade[shading=ball,ball color=black](\x,3*\yoy)circle(\ball);
		\foreach\x in{4,6}\shade[shading=ball,ball color=black](\x,2*\yoy)circle(\ball);
		\foreach\x in{0,5}\shade[shading=ball,ball color=black](\x,1*\yoy)circle(\ball);
		\foreach\x in{0,2,5}\shade[shading=ball,ball color=black](\x,0*\yoy)circle(\ball);
	\end{tikzpicture}
	&
	\begin{tikzpicture}[scale=.6]
		\bound
		\foreach\x in{0,1,2,3,4,5,6}\draw(\x,-1.5*\yoy)--++(0,7.5*\yoy);
		\draw(0,6*\yoy)--++(6,0);
		\foreach\x in{0,1,2,3,4,5,6}\foreach\y in{-1,0,1,2,3,4,5}\draw(\x,\y*\yoy)++(-.5*\spac,0)--++(\spac,0);
		\foreach\x in{0,1,2,3,4,5,6}\shade[shading=ball,ball color=black](\x,5*\yoy)circle(\ball);
		\foreach\x in{0,1,2,3,4,5}\shade[shading=ball,ball color=black](\x,4*\yoy)circle(\ball);
		\foreach\x in{5}\shade[shading=ball,ball color=black](\x,3*\yoy)circle(\ball);
		\foreach\x in{5}\shade[shading=ball,ball color=black](\x,2*\yoy)circle(\ball);
		\foreach\x in{0,2,4,6}\shade[shading=ball,ball color=black](\x,1*\yoy)circle(\ball);
		\foreach\x in{0,5}\shade[shading=ball,ball color=black](\x,0*\yoy)circle(\ball);
	\end{tikzpicture}
	\\
	\la_X&\la_Y
	\\[20pt]
	\begin{tikzpicture}[scale=1.2]
		\foreach\x in{0,1,2,3,4,5,6}\draw(90-\x*360/7:1)node(\x){$\x$};
		\draw[->](6)--(0);
		\draw[->](0)--(1);
		\draw[->](1)--(2);
		\draw[->](2)--(3);
		\draw[->](3)--(4);
		\draw[->](5)--(4);
	\end{tikzpicture}
	&
	\begin{tikzpicture}[scale=1.2]
		\foreach\x in{0,1,2,3,4,5,6}\draw(90-\x*360/7:1)node(\x){$\x$};
		\draw[->](1)--(0);
		\draw[->](1)--(2);
		\draw[->](2)--(3);
		\draw[->](3)--(4);
		\draw[->](4)--(5);
		\draw[->](6)--(5);
	\end{tikzpicture}
	\\[5pt]
	X&Y
\end{array}
\]
We see that $\Dist{X}{Y}=\{\ol0,\ol4\}$ and hence $\dist{X}{Y}=2$. We follow the notation of \cref{grbthm}. Let $i=\widebar0$: position $x_0=21$ is a space in $\Ab{\mu_X}$ and position $y_0=35$ is a bead in $\Ab{\mu_Y}$. Here, for every $1\le k\le6$, no modifications of $\mu_X$ or $\la_Y$ are allowed by the procedure. Consider the addable $7$-hook at runner 0 of $\Ab{\mu_X}$ at position $a_1=35$. This gives $d_1=42$ from which follows that $k_1=5$ and $c_1=37$. The partition $\mu_X$ has an addable 2-hook at runner 0, this is at position $a_2=28$. Hence, $d_2=30$ and consequently $k_2=2$ and $c_2=a_2=28$. Then $s=2$ and we stop. 
For the 'removing' part, since $X$ is a \rib, $t=1$ and $j_1=\widebar5$. We modify $\Ab{\mu_X}$ according to the rule given in \cref{grbthm} (as shown above), obtaining the partition $\mu_Z=(22,21,17,16,14,12,11,6)\vdash119$. Denoting by $Z$ the $7$-shape of $\Xmod$, we see that $\dist{X}{Z}=1$ and $\dist{Y}{Z}=0$.

\[
\begin{array}{c}
	\begin{tikzpicture}[scale=.6]
		\foreach\x in{0,1,2,3,4,5,6}\draw(\x,-.5*\yoy)--++(0,7.5*\yoy);
		\draw(0,7*\yoy)--++(6,0);
		\foreach\x in{0,1,2,3,4,5,6}\foreach\y in{0,1,2,3,4,5}\draw(\x,\y*\yoy)++(-.5*\spac,0)--++(\spac,0);
		\foreach\x in{0,1,2,3,4,5,6}\shade[shading=ball,ball color=black](\x,6*\yoy)circle(\ball);
		\foreach\x in{0,1,2,3,4,5}\shade[shading=ball,ball color=black](\x,5*\yoy)circle(\ball);
		\foreach\x in{5}\shade[shading=ball,ball color=black](\x,4*\yoy)circle(\ball);
		\foreach\x in{4,6}\shade[shading=ball,ball color=black](\x,3*\yoy)circle(\ball);
		\foreach\x in{2,5}\shade[shading=ball,ball color=black](\x,2*\yoy)circle(\ball);
		\foreach\x in{0,5}\shade[shading=ball,ball color=black](\x,1*\yoy)circle(\ball);
		\foreach\x in{0}\shade[shading=ball,ball color=black](\x,0*\yoy)circle(\ball);
		\draw[white](0,0*\yoy)node{$\bigstar$};
		\draw[white](2,2*\yoy)node{$\bigstar$};
	\end{tikzpicture}
	\\
	\mu_Z
	\\[12pt]
	\begin{tikzpicture}[scale=.6]
		\foreach\x in{0,1,2,3,4,5,6}\draw(\x,-.5*\yoy)--++(0,7.5*\yoy);
		\draw(0,7*\yoy)--++(6,0);
		\foreach\x in{0,1,2,3,4,5,6}\foreach\y in{0,1,2,3,4,5}\draw(\x,\y*\yoy)++(-.5*\spac,0)--++(\spac,0);
		\foreach\x in{0,1,2,3,4,5,6}\shade[shading=ball,ball color=black](\x,6*\yoy)circle(\ball);
		\foreach\x in{0,1,2,3,4,5}\shade[shading=ball,ball color=black](\x,5*\yoy)circle(\ball);
		\foreach\x in{5}\shade[shading=ball,ball color=black](\x,4*\yoy)circle(\ball);
		\foreach\x in{4,6}\shade[shading=ball,ball color=black](\x,3*\yoy)circle(\ball);
		\foreach\x in{0,5}\shade[shading=ball,ball color=black](\x,2*\yoy)circle(\ball);
		\foreach\x in{0,2,5}\shade[shading=ball,ball color=black](\x,1*\yoy)circle(\ball);
	\end{tikzpicture}
	\\
	\la_X
\end{array}
\qquad
\begin{array}c
	\begin{tikzpicture}[scale=1.2]
		\foreach\x in{0,1,2,3,4,5,6}\draw(90-\x*360/7:1)node(\x){$\x$};
		\draw[->](1)--(2);
		\draw[->](3)--(4);
		\draw[->](4)--(5);
		\draw[->](6)--(5);
		\draw[->](6)--(0);
	\end{tikzpicture}
	\\[5pt]
	Z
\end{array}
\]

\end{eg}

Now we use \cref{prop2belt} to show that the \conjbl $B$ is actually a $p$-block of $\cenalgk$. This is slightly more complicated than in \cref{secrib}, because it is no longer the case that two \confs at distance $0$ are $p$-linked. Take two \confs $X,Y\in\calb$ such that $\dist{X}{Y}=0$. By \cref{beltequiv}, $X$ and $Y$ are $p$-linked if $\arrX\cup\arrY$ is not a directed cycle. Hence from now on we suppose that $\arrX\cup\arrY$ is a directed cycle and we assume \wolog that all its arrows are oriented clockwise. The aim of the rest of this section is to prove that $X$ and $Y$ are $p$-linked. 

First of all we establish some notation. Every \skewp $\mulam$ in $B$ satisfies the hypothesis of \cref{beltcore} and hence $\core{\mu}=\core{\la}$. So there is a $p$-core $\ga$ and an integer $w$ such that
\[
\core\mu=\core\la=\ga,\qquad\mathrm{weight}_p(\la)=w,\quad\mathrm{weight}_p(\mu)=w+1
\]
for every \skewp $\mulam$ in $B$. Let $c\in\zpz$ be the $p$-residue of the right-most node in the first part of $\gamma$, i.e.\ $c=\widebar{\gamma_1-1}$. If $\gamma$ is the empty partition, we set $c\vcentcolon=\widebar{p-1}$.

Let $L$ be the $p$-block of $S_l$ whose partitions all have $p$-core $\gamma$ and $p$-weight $w$. For each $i\in\zpz$, let \[
L_i=\{\la\in L\ |\ \widebar{\la_1-1}=i\}.
\] We remark that some $L_i$ can be empty and that \[
L=\bigsqcup_{i\in\zpz}L_i.
\]
For every $\la\in L$, let $O_\la$ be the set of partitions $\mu\vdash m$ such that $\mulam\in B$ and all the arrows of the \conf of $\mulam$ are clockwise (i.e.\ $\mulam$ does not contain two nodes in the same column).

\begin{rmk} 
Suppose $\la_X,\la_Y\in L$, $\mu_X\in O_{\la_X}$ and $\mu_Y\in O_{\la_Y}$, and let $X$ and $Y$ be the \confs of $\X$ and $\Y$ respectively. Then $\dist{X}{Y}=0$, because all edges are oriented clockwise. Moreover, every pair $X,Y$ such that $\arrX\cup\arrY$ is a clockwise directed cycle arises in this way.
\end{rmk}

For every $i\in\zpz$, let $X_i$ be the (\rib) \conf
\[
\gyoungs({{1.4}},\ipo_2\hdts\imo i).
\]
For example, if $p=7$ and $i=4$, $X_4$ is the \conf $Y$ in the example in \cref{beltequiv}.

Let $\la\in L_i$ and let $\mu_\la=(\la_1+p,\la_2,\dots,\la_{l(\la)})$. Then the \abac for $\mu_\la$ is obtained from the \abac for $\la$ by moving the last bead down one position. It is easy to see that $\mu_\la\in O_\la$ and $\mu_\la\setminus\la$ has \conf $X_i$.

\begin{eg}
Let $m=37$, $l=30$ and $p=n=7$. Consider the partition $\la=(12,8,5,4,1)\vdash30$. Then $\mu_\la=(19,8,5,4,1)$ and $\mu_\la\setminus\la$ has $7$-shape $X_4$. 
\[
\begin{array}{c}
	\begin{tikzpicture}[scale=0.95]
		\tyng(0cm,0cm,19,8,5,4,1)
		\Ylinethick{1.5pt}
		\tgyoung(0cm,0cm,::::::::::::;5;6;0;1;2;3;4,::::::::,:::::,::::,:)
	\end{tikzpicture}
	\\
	\mu_\la\sm\la
\end{array}
\]
\end{eg}

Now take $\mu\in O_\la$ with $\mu\ne\mu_\la$, and call $X$ the \conf of $\mulam$.

\begin{propn}\label{XXi}
$X$ and $X_i$ are $p$-linked.
\end{propn}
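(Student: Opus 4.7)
The plan is to invoke \cref{beltequiv} in the same spirit as the ribbon-block arguments of \cref{secrib}. Since $\mu \in O_\la$, every arrow of $\arrX$ is clockwise, and $\arr{X_i}$ is the full clockwise cycle with only the arrow $\ol i \to \ol{i+1}$ omitted. Hence $\dist{X}{X_i} = 0$ holds automatically, and by \cref{beltequiv} the task reduces to showing that either $\arrX \cup \arr{X_i}$ fails to be a directed cycle --- in which case $X$ and $X_i$ share a composition factor directly --- or else to exhibiting an intermediate \conf $Z \in \calb$ through which they are $p$-linked in two steps.

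Because $\arr{X_i}$ already contains every clockwise arrow but one, the union $\arrX \cup \arr{X_i}$ is a directed cycle exactly when $\arrX$ contains the arrow $\ol i \to \ol{i+1}$, i.e.\ when the cell of $\mu\sm\la$ with residue $\ol i$ lies immediately to the left of the cell with residue $\ol{i+1}$. If this arrow is absent, we are done. Otherwise, the rigidity $\ol i = \ol{\la_1 - 1}$ and $\ol{i+1} = \ol{\la_1}$, combined with the facts that $(1,\la_1) \in \la$, that $|\mu| - |\la| = p$, and that $\mu \ne \mu_\la$, forces these two cells to sit in some row $r \ge 2$ of $\mu\sm\la$ at column positions rigidly determined by the abacus of $\la$.

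With this rigidity in hand, the natural strategy is to construct $Z$ by a controlled abacus move: produce a partition $\la' \in L$ by sliding a bead on a suitable runner of $\Ab\la$ --- chosen to break the row of $\la$ responsible for the offending arrow --- and then choose $\mu'$ with $\mu'\sm\la' \in B$ accordingly, so that in $\mu'\sm\la'$ the cells with residues $\ol i$ and $\ol{i+1}$ end up in distinct \conncomps. This removes the arrow $\ol i \to \ol{i+1}$ from $\arr Z$, and one then verifies that the remaining arrows of $\arr Z$ are compatible with those of both $\arrX$ and $\arr{X_i}$, so that $\dist{X}{Z} = \dist{Z}{X_i} = 0$ and at least one clockwise arrow is missing from each of $\arrX \cup \arr Z$ and $\arr Z \cup \arr{X_i}$. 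The main obstacle is the case analysis needed to check that such a $\la'$ and $\mu'$ can always be found in $B$; when a single intermediate does not suffice, the construction should iterate along the lines of the inductive abacus argument used to prove \cref{grbthm}.
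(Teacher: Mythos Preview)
Your reduction is correct: since all arrows in $\arrX$ and $\arr{X_i}$ are clockwise, $\dist{X}{X_i}=0$, and the only obstruction is the arrow $\ol i\to\ol{i+1}$ possibly appearing in $\arrX$. When it is absent, $\bxi$ is the hand node of its \conncomp and $X$, $X_i$ share the term $(i+1,\dots,i)$, so you are done. So far this matches the paper exactly.

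The gap is in the remaining case. You propose to modify $\la$ by an unspecified abacus move and then choose a compatible $\mu'$, but you do not carry this out; you explicitly flag the case analysis as an obstacle and suggest iterating in the style of \cref{grbthm}. That is not a proof, and the route you sketch is substantially harder than necessary.

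The paper's argument keeps $\la$ fixed and modifies $\mu$ instead, using a single elementary move with no case analysis. The key observation (which you nearly reach) is that $\mu_1=\la_1$: if $\mu_1>\la_1$ then the nodes $(1,\la_1+1),\dots,(1,\mu_1)$ form a \conncomp of $\mulam$ with foot node of residue $\ol{i+1}$, forcing $\bxi$ to be a hand node elsewhere, contrary to assumption. Now let $j$ be the residue of the hand node of the \conncomp of $X$ containing $\bxi$. Since $\mu_1=\la_1$ and $\ol{\la_1-1}=i$, the positions $(1,\la_1+1),\dots,(1,\la_1+(j-i))$ are addable to $\mu$ and carry residues $i+1,\dots,j$. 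Move the nodes of residues $i+1,\dots,j$ from row $r$ to these positions, obtaining $\tilde\mu$. Then $\tilde\mu\sm\la\in B$, and $\arr{\tilde X}$ is precisely $\arrX$ with the arrow $\ol i\to\ol{i+1}$ deleted. Hence $\arr{\tilde X}\cup\arr{X_i}$ omits that arrow and is not a directed cycle, while $\arr{\tilde X}\cup\arrX=\arrX$ is not a directed cycle because $X$ is a \conf. One intermediate suffices; no iteration or abacus induction is needed.
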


\begin{pf}
If a \conncomp of $X$ has hand node $\bxi$ then we are done because $\ch{X_i}$ and $\ch{X}$ share the term $(i+1,\dots,i)$. Suppose that this is not the case, and let $j\in\zpz$ be the $p$-residue of the hand node of the \conncomp of $X$ containing $i$. We firstly observe that $\mu_1=\la_1$ because otherwise the nodes $(1,\la_1+1),\dots,(1,\mu_1)$ of $\mu$ would form a \conncomp of $X$ with $\bxipo$ as foot node and this would imply that $X$ has a \conncomp whose hand node is $\bxi$.

Now we can modify $\mu$ by moving the nodes of residue $i+1,i+2,\dots,j$ in $\mulam$ to the end of the first row. Let $\tilde\mu$ be the resulting partition. Then $\tilde\mu\setminus\la$ is in $B$ and its \conf $\tilde X$ is $p$-linked with both $X_i$ and $X$: $\ch{X}$ and $\ch{\tilde X}$ share the term $(i+1,i+2,\dots,i)$ while $\ch{X_i}$ and $\ch{\tilde X}$ share $(j+1,j+2,\dots,j)$. So $X$ and $X_i$ are $p$-linked.

\end{pf}

\begin{eg}
Let $m,l,p,n$ and $\la$ be as in the previous example. Let $\mu=(12^2,5,4^2)\in O_\la$. The \skewp $\mulam$ belongs to the \conjbl $B$ of $\calc_{30,37}^k$ characterised by the $7$-core $\gamma\vcentcolon=\mathrm{core}_7(B)=(1^2)\vdash2$. Call $X$ the $7$-shape of $\mulam$.
\[
\begin{array}{c}
\begin{tikzpicture}[scale=0.95]
\tyng(0cm,0cm,12,12,5,4,4)
\Ylinethick{1.5pt}
\tgyoung(0cm,0cm,::::::::::::,::::::::;0;1;2;3,:::::,::::,:;4;5;6)
\end{tikzpicture}
\\
\mulam
\end{array}
\]
Retracing the notation of \cref{XXi}, we have that $i=\widebar4$ (because $\la_1-1=11\equiv4\ppmod7$ and then $\la\in L_4$) and $j=\widebar6$ because the node $\young(4)$ belongs to the \conncomp of $X$ having hand node $\young(6)$. So we find $\tilde\mu$ by moving the nodes $\young(56)$ to the end of the first row, giving $\tilde{\mu}=(14,12,5,4,2)\vdash37$.
\[
\begin{array}{c}
\begin{tikzpicture}[scale=0.95]
\tyng(0cm,0cm,14,12,5,4,2)
\Ylinethick{1.5pt}
\tgyoung(0cm,0cm,::::::::::::;5;6,::::::::;0;1;2;3,:::::,::::,:;4)
\end{tikzpicture}
\\
\tilde\mu\sm\la
\end{array}
\]
Now the $7$-shape $\tilde{X}$ of $\tilde{\mu}\setminus\la$, is $7$-linked with both $X$ and $X_4$.
\end{eg}

\begin{rmk}
If $\la_1,\la_2\in L_i$, then $\mu_{\la_1}\setminus\la_1$ and $\mu_{\la_2}\setminus\la_2$ have the same \conf $X_i$. By the previous proposition we deduce that, for every $\mu_1\in O_{\la_1}$ and $\mu_2\in O_{\la_2}$, the \confs of $\mu_1\setminus\la_1$ and $\mu_2\setminus\la_2$ are $p$-linked.
\end{rmk}

We are now very close to the end of this section. The next result will link \confs $X_i,X_j$ for all pairs of $i,j\in\zpz$. Recall that $c=\ol{\ga_1-1}$, and observe that $L_c$ is non-empty: for example, it contains the partition $\ol\ga=(\ga_1+wp,\ga_2,\ga_3,\dots)$. In particular this shows that $B$ contains at least one \skewp with \conf $X_c$.


\begin{propn}\label{XiXc}
Let $i\in\zpz$ such that $L_i\ne\emptyset$. Then $X_i$ and $X_c$ are $p$-linked.
\begin{pf}
Let $\la\in L_i$ and consider the \skewps $\mu_\la\setminus\la$ and $\mu_{\ol{\gamma}}\setminus\ol{\gamma}$ in $B$ having \confs $X_i$ and $X_c$, respectively. For convenience we suppose that $c=\widebar0$, though the proof can easily be modified for other values of $c$.

If $i=\widebar0$, the result follows by the previous remark.

Suppose that $i\ne\widebar0$. First of all we observe that in the \abac of $\gamma$, the leftmost runner, i.e.\ runner 0, has more beads than any other runner: this is because the last bead on the abacus occurs on this runner and (because $\ga$ is a $p$-core) the beads are at the topmost positions of their runners. So, following the notation of \cref{seccore}, we have that for every $j\in\zpz\setminus\{\widebar0\}$, 
\begin{equation}
b_0(\gamma)>b_j(\gamma).
\end{equation}
The \abac of $\la$ is obtained by lowering down beads in $\Ab{\gamma}$ a total amount of $w$ times. Now recall the $p$-quotient $(\la^{(0)},\dots,\la^{(p-1)})$ of $\la$. The fact that $\la\in L_i$ implies that
\begin{equation}
{\la^{(i)}}_1\ge{\la^{(0)}}_1+b_0(\la)-b_i(\la),
\end{equation} where ${\la^{(i)}}_1$ and ${\la^{(0)}}_1$ denote the number of spaces above the lowest beads in the $i$-th and 0-th runners of $\Ab{\la}$, respectively. Recalling that $b_j(\la)=b_j(\gamma)$ for all $j\in\zpz$, we combine $(1)$ and $(2)$ getting
\begin{equation}
{\la^{(i)}}_1>{\la^{(0)}}_1.
\end{equation}
By construction, if $(\mu_\la^{(0)},\dots,\mu_\la^{(p-1)})$ denotes the $p$-quotient of $\mu_\la$, then $(\mu_\la^{(j)})_1={\la^{(j)}}_1$ for every $j\in\zpz\setminus\{i\}$ and
\begin{equation}
{(\mu_\la^{(i)})}_1={\la^{(i)}}_1+1.
\end{equation}
Comparing $(3)$ and $(4)$ we find that \[
{(\mu_\la^{(i)})}_1>{(\mu_\la^{(0)})}_1+1.
\]
This inequality implies that $\mu_\la$ has at least two addable $i$-hooks at runner 0. We consider one of them, say at position $a_1\equiv 0\ppmod p$, avoiding the one that affects the space immediately above the lowest bead in the $i$-runner of $\Ab{\mu_\la}$, if necessary. Now let $1\le k_1\le i$ be the least integer such that $a_1+i-k_1$ is a bead. If $k_1<i$, by $(1)$ and the fact that $b_j(\mu_\la)=b_j(\gamma)$ for all $j\in\zpz$, $\mu_\la$ has an addable $(i-k_1)$-hook at runner 0, say at position $a_2$. Then we start again the process from position $a_2$: let $k_2$ be minimal such that position $a_2+i-k_1-k_2$ is a bead. It is clear that this procedure terminates at a step $s$ such that $i=\sum_{1\le u\le s}k_u$. Then for $1\le v\le s$ we have positions $c_v\vcentcolon=a_v+i-\sum_{1\le u\le v}k_u$ and $d_v\vcentcolon=a_v+i-\sum_{1\le u\le v-1}k_u$ such that $c_v$ is a bead and $c_v+1,\dots,d_v$ are all spaces in $\Ab{\mu_\la}$. Then we modify $\Ab{\mu_\la}$ as follows (remind that $C$ denotes the charge of the abacus displays):
\begin{itemize}
\item move the star bead at $\la_1+C+p-1$ to the space at $\la_1+C+p-i-1$,
\item for every $1\le v\le s$, move the bead at $c_v$ to the space $d_v$.
\end{itemize}

The resulting partition $\hat\mu\vdash m$ is such that $\hat\mu\setminus\la\in B$ and has \conf $\hat X$ that is $p$-linked with both $X_i$ and $X_c$: the \forchs $\ch{X_i}$ and $\ch{\hat X}$ share the term $(i+1,\dots,i)$ while $\ch{X_c}$ and $\ch{\hat X}$ share the term $(\widebar1,\widebar2,\dots,\widebar{p-1},\widebar0)$. This ends the proof.

\end{pf}
\end{propn}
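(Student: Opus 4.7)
The plan is to produce, for each $i \ne c$ with $L_i \ne \emptyset$, an intermediate \conf $\hat X \in \calb$ such that $\ch{\hat X}$ contains both $(i+1, i+2, \ldots, i)$ (the only summand of $\ch{X_i}$) and $(c+1, c+2, \ldots, c)$ (the only summand of $\ch{X_c}$); \cref{beltequiv} then links $X_i$ to $\hat X$ and $\hat X$ to $X_c$. The case $i = c$ is trivial. Fix any $\la \in L_i$, work on the abacus of $\mu_\la$, and assume \wolog{} that $c = \widebar 0$. The shape $\hat X$ will be the \conf of a skew-partition $\hat\mu \sm \la \in B$, with $\hat\mu$ produced from $\mu_\la$ by a prescribed sequence of bead moves.

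The first step is to produce room on runner $0$. Since $\gamma$ is a $p$-core whose last bead sits on runner $c = \widebar 0$, we have $b_0(\gamma) > b_j(\gamma)$ for every $j \ne \widebar 0$, and this inequality descends to $\la$ and $\mu_\la$. Combined with the condition $\la \in L_i$ (which forces the largest bead of $\Ab\la$ to lie on runner $i$), a short computation with the $p$-quotient yields $(\mu_\la^{(i)})_1 > (\mu_\la^{(0)})_1 + 1$, i.e.\ runner $0$ of $\Ab{\mu_\la}$ admits at least two distinct addable $i$-hooks.

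The second step is an iterative bead-choosing procedure patterned after the proof of \cref{grbthm}. Pick an addable $i$-hook at some $a_1 \equiv 0 \ppmod p$, chosen so as not to interact with the slot just above the deepest bead on runner $i$ (this is possible precisely because of the strict inequality above). Let $k_1 \in \{1, \ldots, i\}$ be minimal with $a_1 + i - k_1$ a bead in $\Ab{\mu_\la}$; if $k_1 < i$, then the bead-count inequality on runner $0$ guarantees a further addable $(i-k_1)$-hook at some $a_2 \equiv 0 \ppmod p$, and we iterate. The process terminates at a step $s$ with $k_1 + \cdots + k_s = i$, producing positions $c_v := a_v + i - \sum_{u\le v} k_u$ (beads) and $d_v := a_v + i - \sum_{u<v} k_u$ (spaces) in $\Ab{\mu_\la}$.

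Finally I define $\hat\mu$ from $\mu_\la$ by moving the star bead at $\la_1 + C + p - 1$ to the space at $\la_1 + C + p - 1 - i$, and, for each $v$, moving the bead at $c_v$ to the space at $d_v$. The first move detaches the sub-ribbon of residues $c+1, \ldots, i$ from the end of the original first-row ribbon, and the remaining moves re-attach those residues as fresh rim hooks concentrated near runner $0$. The resulting $\hat\mu \sm \la$ lies in $B$, and reading the connected components of $\hat X$ in the two natural orders exhibits both $(i+1, \ldots, i)$ and $(c+1, \ldots, c)$ as summands of $\ch{\hat X}$. The main technical obstacle is verifying the existence and compatibility of the iterative choices: the strict inequality $(\mu_\la^{(i)})_1 > (\mu_\la^{(0)})_1 + 1$ is exactly what allows the initial hook at $a_1$ to be chosen clear of the star-bead's target slot, and one must propagate this clearance through the induction to ensure every $(c_v, d_v)$ pair exists and the resulting diagram is a genuine skew-partition with the claimed component structure.
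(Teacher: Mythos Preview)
Your proposal is correct and follows essentially the same approach as the paper: you reduce to $c=\widebar0$, derive the key inequality $(\mu_\la^{(i)})_1>(\mu_\la^{(0)})_1+1$ from the bead-count property of the $p$-core $\gamma$ together with $\la\in L_i$, run the same iterative addable-hook procedure on runner~$0$ to produce the positions $c_v,d_v$, and perform the identical bead moves to obtain $\hat\mu$. The paper spells out the intermediate inequalities (your ``short computation with the $p$-quotient'') in slightly more detail, but the construction of $\hat X$ and the verification via shared terms of the formal characters are the same.
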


\begin{eg}
Let $B$ the \conjbl of $\calc_{30,37}^{k}$ from the previous examples. As already pointed out, following the notation, $\gamma=(1^2)$ and then $c=\gamma_1-1\equiv0\ppmod7$. We see that $\ol\gamma=(29,1)$ and $\mu_{\ol\gamma}=(36,1)$.
\[
\begin{array}{c}
\begin{tikzpicture}[scale=0.95]
\tyng(0cm,0cm,36,1)
\Ylinethick{1.5pt}
\tgyoung(0cm,0cm,:::::::::::::::::::::::::::::;6;5;4;3;2;1;0,:)
\end{tikzpicture}
\\
\mu_{\ol\gamma}\sm\ol\gamma
\end{array}\]
We show how to modify the \skewp $\mu_\la$ from the previous example, to find a $7$-shape that is $7$-linked with both $X_4$ and $X_0$.

We first draw the \abac of $\ga$ (all \abacs in the example have charge $C=14$). 
\[
\begin{array}{c}
\abacus(lmmmmmr,bbbbbbb,bbbbbnb,bnnnnnn,nnnnnnn)
\\
\ga
\end{array}
\]
Note that as remarked in the proof of \cref{XiXc}, the number of beads in runner 0 of $\gamma$, $b_0(\gamma)=3$, is bigger than the number of beads in any other runner of $\Ab{\gamma}$. Our proof points out that $\mu_\la$ has at least an addable 4-hook at runner 0 (in fact it has two). Take $a_1=14$ and $d_1=18$. We have that $k_1=2$ and so $c_1=16$. Now we consider the addable 2-hook of $\mu_\la$ at position $a_2=7$. This gives $d_2=9$ and consequently $k_2=1$ and $c_2=8$. Finally we take the addable node at runner 0 of $\mu_\la$ at position $a_3=14$. We have that $d_3=15$, giving $k_3=1$ and $c_3=14$ (then $s=3$).
We modify $\Ab{\mu_\la}$ following the rule given in the proof of \cref{XiXc} (shown in figure). The resulting partition $\hat\mu=(15,8,7,5,1^2)\vdash37$ is such that $\hat\mu\setminus\la$ has the desired $7$-shape $\hat X$.

\newcommand\ball{.3}
\newcommand\spac{.15}
\newcommand\yoy{.8}
\[
\begin{array}{c@{\qquad\qquad}c}
	\begin{tikzpicture}[scale=.6]
		\foreach\x in{0,1,2,3,4,5,6}\draw(\x,-.5*\yoy)--++(0,5.5*\yoy);
		\draw(0,5*\yoy)--++(6,0);
		\foreach\x in{0,1,2,3,4,5,6}\foreach\y in{0,1,2,3}\draw(\x,\y*\yoy)++(-.5*\spac,0)--++(\spac,0);
		\foreach\x in{0,1,2,3,4,5,6}\shade[shading=ball,ball color=black](\x,4*\yoy)circle(\ball);
		\foreach\x in{0,1,3}\shade[shading=ball,ball color=black](\x,3*\yoy)circle(\ball);
		\foreach\x in{0,2,6}\shade[shading=ball,ball color=black](\x,2*\yoy)circle(\ball);
		\foreach\x in{4}\shade[shading=ball,ball color=black](\x,0*\yoy)circle(\ball);
		\draw[white](4,0*\yoy)node{$\bigstar$};
		\draw(4,0*\yoy)node(b){\ };
		\draw(0,0*\yoy)node(a){\ };
		\draw[->,red,thick](b)to[out=160,in=20](a);
		\draw(0,2*\yoy)node(d){\ };
		\draw(1,2*\yoy)node(c){\ };
		\draw[<-,blue,thick](c)to[out=160,in=20](d);
		\draw(2,2*\yoy)node(e){\ };
		\draw(4,2*\yoy)node(f){\ };
		\draw[<-,blue,thick](f)to[out=160,in=20](e);
		\draw(1,3*\yoy)node(g){\ };
		\draw(2,3*\yoy)node(h){\ };
		\draw[<-,blue,thick](h)to[out=160,in=20](g);
	\end{tikzpicture}
	& \begin{tikzpicture}[scale=.6]
		\foreach\x in{0,1,2,3,4,5,6}\draw(\x,-.5*\yoy)--++(0,5.5*\yoy);
		\draw(0,5*\yoy)--++(6,0);
		\foreach\x in{0,1,2,3,4,5,6}\foreach\y in{0,1,2,3}\draw(\x,\y*\yoy)++(-.5*\spac,0)--++(\spac,0);
		\foreach\x in{0,1,2,3,4,5,6}\shade[shading=ball,ball color=black](\x,4*\yoy)circle(\ball);
		\foreach\x in{0,2,4}\shade[shading=ball,ball color=black](\x,3*\yoy)circle(\ball);
		\foreach\x in{1,4,6}\shade[shading=ball,ball color=black](\x,2*\yoy)circle(\ball);
		\foreach\x in{0}\shade[shading=ball,ball color=black](\x,0*\yoy)circle(\ball);
		\draw[white](0,0*\yoy)node{$\bigstar$};
		\draw[white](1,2*\yoy)node{$\bigstar$};
		\draw[white](4,2*\yoy)node{$\bigstar$};
		\draw[white](2,3*\yoy)node{$\bigstar$};
	\end{tikzpicture}
	\\
	\mu_\la & \hat\mu
	\\[12pt]
	\begin{tikzpicture}[scale=.6]
		\foreach\x in{0,1,2,3,4,5,6}\draw(\x,-.5*\yoy)--++(0,5.5*\yoy);
		\draw(0,5*\yoy)--++(6,0);
		\foreach\x in{0,1,2,3,4,5,6}\foreach\y in{0,1,2,3}\draw(\x,\y*\yoy)++(-.5*\spac,0)--++(\spac,0);
		\foreach\x in{0,1,2,3,4,5,6}\shade[shading=ball,ball color=black](\x,4*\yoy)circle(\ball);
		\foreach\x in{0,1,3}\shade[shading=ball,ball color=black](\x,3*\yoy)circle(\ball);
		\foreach\x in{0,2,6}\shade[shading=ball,ball color=black](\x,2*\yoy)circle(\ball);
		\foreach\x in{4}\shade[shading=ball,ball color=black](\x,1*\yoy)circle(\ball);
	\end{tikzpicture}
 & \begin{tikzpicture}[scale=.6]
 	\foreach\x in{0,1,2,3,4,5,6}\draw(\x,-.5*\yoy)--++(0,5.5*\yoy);
 	\draw(0,5*\yoy)--++(6,0);
 	\foreach\x in{0,1,2,3,4,5,6}\foreach\y in{0,1,2,3}\draw(\x,\y*\yoy)++(-.5*\spac,0)--++(\spac,0);
 	\foreach\x in{0,1,2,3,4,5,6}\shade[shading=ball,ball color=black](\x,4*\yoy)circle(\ball);
 	\foreach\x in{0,1,3}\shade[shading=ball,ball color=black](\x,3*\yoy)circle(\ball);
 	\foreach\x in{0,2,6}\shade[shading=ball,ball color=black](\x,2*\yoy)circle(\ball);
 	\foreach\x in{4}\shade[shading=ball,ball color=black](\x,1*\yoy)circle(\ball);
 \end{tikzpicture}
	\\
	\la & \la
\end{array}
	\]
	
\[
\begin{array}{c}
\begin{tikzpicture}[scale=0.95]
\tyng(0cm,0cm,15,8,7,5,1,1)
\Ylinethick{1.5pt}
\tgyoung(0cm,0cm,::::::::::::;5;6;0,::::::::,:::::;3;4,::::;1,:,;2)
\end{tikzpicture} 
\\
\hat\mu\sm\la
\end{array}
\]
\end{eg}

We have finally gained the key result of this section.

\begin{cory}\label{beltthm}
	Suppose $X,Y\in\calb$. Then $X$ and $Y$ are $p$-linked.
	\begin{pf}
		By \cref{prop2belt}, we can reduce to the case when $\dist XY=0$. Moreover, \cref{beltequiv} tells us that we are done if $\arrX\cup\arrY$ is not an oriented cycle. In the opposite case, the description of the remaining cases together with \cref{XXi,XiXc} ends the proof.
	\end{pf}
\end{cory}

Now, \cref{beltthm,beltequiv} implies that the decomposition matrix $D_B$ of the belt block $B$ is connected. As remarked along the treatment, this is enough to state our main result.

\begin{cory}\label{beltcor}
	\cref{mainconj} holds for belt blocks.
\end{cory}

\end{document}